\title{Splitting Appell functions in terms of single quotients of theta functions}
\author{Eric T. Mortenson}
\address{Department of Mathematics and Computer Science, Saint Petersburg State University, Saint Petersburg, 199034, Russia}
\email{etmortenson@gmail.com}
\author{Dilshod Urazov}
\address{Department of Mathematics and Computer Science, Saint Petersburg State University, Saint Petersburg, 199034, Russia}
\email{urazofficial@gmail.com}
\renewcommand\theta{\vartheta}
\newtheorem{theorem}{Theorem}
\newtheorem{corollary}[theorem]{Corollary}
\newtheorem{proposition}[theorem]{Proposition}
\theoremstyle{definition}
\numberwithin{theorem}{section} 
\numberwithin{equation}{section}
\begin{document}

\date{15 May 2023}

\subjclass[2020]{11F11, 11F27, 11F37}

\keywords{Appell functions, theta functions, mock theta functions}

\begin{abstract}
    Ramanujan's last letter to Hardy introduced the world to mock theta functions, and the mock theta function identities found in Ramanujan's lost notebook added to their intriguing nature.  For example, we find the four tenth-order mock theta functions and their six identities.  The six identities themselves are of a spectacular nature and were first proved by Choi.  We also find over eight sixth-order mock theta functions in the lost notebook, but among their many identities there is only one relationship like those of the tenth-orders.  Recently, three new identities for the sixth-order mock theta functions that are in the spirit of the six tenth-order identities were discovered.   Here we present several families of tenth-order like identities for Appell functions, which are the building blocks of Ramanujan's mock theta functions.
\end{abstract}

\maketitle

\section{Introduction}

We have many themes in the introduction.  We have Ramanujan's last letter to Hardy, Ramanujan's Lost Notebook as discovered by George Andrews, mock theta functions, mock theta function identities involving single quotients of theta functions, various notions of building blocks of mock theta functions, properties of the building blocks, and mock theta function identities that could have easily been in the lost notebook but are mysteriously absent.  The last theme helps to fuel speculation that pages from the lost notebook are missing \cite[p. 287]{ABV}.

In Ramanujan's last letter to Hardy, he gave a list of seventeen functions which he called ``mock theta functions.''   Each function was defined by Ramanujan as a $q$-series convergent for $|q|<1$.  He stated that they have certain asymptotic properties similar to those of ordinary theta functions, but that they are not theta functions.  He also stated identities relating some of the mock theta functions to each other.   One finds four `3rd' order mock theta functions and several identities; ten `5th' order functions and identities; and three `7th' order functions, but with the statement that they are not related.   

Later, many more mock theta function identities were found in the lost notebook.   The newly discovered identities included the mock theta conjectures, which were ten identities where each identity expressed a different 5th order mock theta function in terms of a building block and a single quotient of theta functions.  This particular building block is the so-called universal mock theta function $g(x;q)$.

Let $q:=q_{\tau}=e^{2 \pi i \tau}$, $\tau\in\mathbb{H}:=\{ z\in \mathbb{C}| \textup{Im}(z)>0 \}$, and define $\mathbb{C}^*:=\mathbb{C}-\{0\}$.  Recall
\begin{gather*}
(x)_n=(x;q)_n:=\prod_{i=0}^{n-1}(1-q^ix), \ \ (x)_{\infty}=(x;q)_{\infty}:=\prod_{i\ge 0}(1-q^ix).
\end{gather*}
We define the universal mock theta function $g(x;q)$ as
\begin{equation*}
g(x;q):=x^{-1}\Big ( -1 +\sum_{n=0}^{\infty}\frac{q^{n^2}}{(x)_{n+1}(q/x)_{n}} \Big ).
\end{equation*}
Furthermore, we define the theta function
\begin{equation*}
 \Theta(x;q):=(x)_{\infty}(q/x)_{\infty}(q)_{\infty}=\sum_{n=-\infty}^{\infty}(-1)^nq^{\binom{n}{2}}x^n,
\end{equation*}
where we also use the abbreviations
\begin{gather*}
\Theta_{a,m}:=\Theta(q^a;q^m), \ \ \Theta_m:=\Theta_{m,3m}=\prod_{i\ge 1}(1-q^{mi}), \ {\text{and }}\overline{\Theta}_{a,m}:=\Theta(-q^a;q^m).
\end{gather*}

Two of the ten mock theta conjectures then read
\begin{align*}
f_0(q)&:=\sum_{n= 0}^{\infty}\frac{q^{n^2}}{(-q)_n}=\frac{\Theta_{5,10}\Theta_{2,5}}{\Theta_1}-2q^2g(q^2;q^{10}),\\
f_1(q)&:=\sum_{n= 0}^{\infty}\frac{q^{n(n+1)}}{(-q)_n}=\frac{\Theta_{5,10}\Theta_{1,5}}{J_1}-2q^3g(q^4;q^{10}).
\end{align*}
In the course of proving the mock theta conjectures \cite{AG,H1}, Hickerson found mock theta conjecture analogs for the three 7th order functions \cite{H2}, one of which reads
\begin{equation*}
{\mathcal{F}}_0(q):=\sum_{n\ge 0}\frac{q^{n^2}}{(q^{n+1};q)_{n}}=2+2qg(q;q^{7})-\frac{\Theta_{3,7}^2}{\Theta_1}.
\end{equation*}

The universal mock theta function $g(x;q)$ is only related to the odd ordered mock theta functions.  Although the three seventh-order mock theta functions are introduced in the last letter, they do not appear in the lost notebook.  

Also in the lost notebook, we find the four tenth-order mock theta functions and their six identities \cite{RLN}.  The six identities themselves are of a fantastic nature and were first proved by Choi \cite{C1, C2, C3} using methods similar to those of Hickerson \cite{H1, H2}.  We also find over eight sixth-order mock theta functions in the lost notebook, but among their many identities there is only one relationship like those of the tenth-orders \cite[p. 135, Entry 7.4.2]{ABV}, \cite{RLN}.  Recently, three more tenth-order like identities were discovered for the sixth order mock theta functions \cite{Mo2022}.

We recall the four tenth-order mock theta functions \cite{C1, C2, C3, RLN}
{\allowdisplaybreaks \begin{align*}
{\phi}_{10}(q)&:=\sum_{n\ge 0}\frac{q^{\binom{n+1}{2}}}{(q;q^2)_{n+1}}, \ \ {\psi}_{10}(q):=\sum_{n\ge 0}\frac{q^{\binom{n+2}{2}}}{(q;q^2)_{n+1}}, \\\ 
& \ \ \ \ \ {X}_{10}(q):=\sum_{n\ge 0}\frac{(-1)^nq^{n^2}}{(-q;q)_{2n}}, \ \  {\chi}_{10}(q):=\sum_{n\ge 0}\frac{(-1)^nq^{(n+1)^2}}{(-q;q)_{2n+1}}.\notag
\end{align*}}%
The four functions satisfy six identities, which we have slightly-rewritten in order to emphasize the single quotient of theta functions.  Letting $\omega$ be a primitive third root of unity, we have \cite{C1,C2, RLN}
{\allowdisplaybreaks \begin{align}
q^{2}\phi_{10}(q^9)-\frac{\psi_{10}(\omega q)-\psi_{10}(\omega^2 q)}{\omega - \omega^2}
&=-q\frac{\Theta_{1,2}}{\Theta_{3,6}}\cdot \frac{\Theta_{3,15}\Theta_{6}}{\Theta_{3}},
\label{equation:tenth-id-1}\\
q^{-2}\psi_{10}(q^9)+\frac{\omega \phi_{10}(\omega q)-\omega^2\phi_{10}(\omega^2 q)}{\omega - \omega^2}
&=\frac{\Theta_{1,2}}{\Theta_{3,6}}\cdot \frac{\Theta_{6,15}\Theta_{6}}{\Theta_{3}},
\label{equation:tenth-id-2}\\
X_{10}(q^9)-\frac{\omega \chi_{10}(\omega q)-\omega^2\chi_{10}(\omega^2 q)}{\omega - \omega^2}
&=\frac{\overline{\Theta}_{1,4}}{\overline{\Theta}_{3,12}}\cdot \frac{\Theta_{18,30}\Theta_{3}}{\Theta_{6}},
\label{equation:tenth-id-3}\\
\chi_{10}(q^9)+q^{2}\frac{ X_{10}(\omega q)-X_{10}(\omega^2 q)}{\omega - \omega^2}&=-q^3\frac{\overline{\Theta}_{1,4}}{\overline{\Theta}_{3,12}}\cdot 
\frac{\Theta_{6,30}\Theta_{3}}{\Theta_{6}},
\label{equation:tenth-id-4}
\end{align}}%
and \cite{C3, RLN}
\begin{align}
\phi_{10}(q)-q^{-1}\psi_{10}(-q^4)+q^{-2}\chi_{10}(q^8)&=\frac{\overline{\Theta}_{1,2}\Theta(-q^2;-q^{10})}{\Theta_{2,8}},
\label{equation:tenth-id-5}\\
\psi_{10}(q)+q\phi_{10}(-q^4)+X_{10}(q^8)&=\frac{\overline{\Theta}_{1,2}\Theta(-q^6;-q^{10})}{\Theta_{2,8}}.
\label{equation:tenth-id-6}
\end{align}

Zwegers \cite{Zw3} has since given short proof of identities (\ref{equation:tenth-id-1})--(\ref{equation:tenth-id-4}).  Mortenson \cite{Mo2018} later gave short proofs of all six identities (\ref{equation:tenth-id-1})--(\ref{equation:tenth-id-6}).

\smallskip
What led Ramanujan to these six identities is a continuing mystery.  Indeed, in Andrews and Berndt's fifth volume on Ramanujan's lost notebook \cite[p. 396]{ABV}, they state 

\smallskip
``{\em It is inconceivable that an identity such as (\ref{equation:tenth-id-5}) could be stumbled upon by a mindless search algorithm without any overarching theoretical insight.}''   

\smallskip
This brings us to a notion of a building block which is in a sense finer than that of the universal mock theta function $g(x;q)$.  Appell functions are the building blocks for both even and odd ordered mock theta functions \cite[Section 5]{HM}, \cite{Zw}.  We will define them as follows
\begin{equation}
m(x,z;q):=\frac{1}{\Theta(z;q)}\sum_{r=-\infty}^{\infty}\frac{(-1)^rq^{\binom{r}{2}}z^r}{1-q^{r-1}xz}.\label{equation:mdef-eq}
\end{equation}
The universal mock theta function can be expressed in terms of Appell functions \cite[Proposition $4.2$]{HM}, \cite[Theorem $2.2$]{H1}:
\begin{equation*}
g(x;q)=-x^{-1}m(q^2x^{-3},x^2;q^3)-x^{-2}m(qx^{-3},x^2;q^3).
\end{equation*}

In \cite{Mo2018}, Mortenson gave short proofs of all six of Ramanujan's identities for the tenth-order mock theta functions by using a recent result on Appell function properties.  

\begin{theorem} \label{theorem:msplit-general-n} \cite[Theorem $3.5$]{HM} For generic $x,z,z'\in \mathbb{C}^*$ 
{\allowdisplaybreaks \begin{align}
 D_n(x,z,z';q)=z' \Theta_n^3  \sum_{r=0}^{n-1}
\frac{q^{{\binom{r}{2}}} (-xz)^r
\Theta\big(-q^{{\binom{n}{2}+r}} (-x)^n z z';q^n\big)
\Theta(q^{nr} z^n/z';q^{n^2})}
{\Theta(xz;q) \Theta(z';q^{n^2}) \Theta\big(-q^{{\binom{n}{2}}} (-x)^n z';q^n)\Theta(q^r z;q^n\big )},
\end{align}}%
where
\begin{equation}
D_n(x,z,z';q):=m(x,z;q) - \sum_{r=0}^{n-1} q^{{-\binom{r+1}{2}}} (-x)^r m\big({-}q^{{\binom{n}{2}-nr}} (-x)^n, z'; q^{n^2} \big).
\label{equation:Dn-def}
\end{equation}
\end{theorem}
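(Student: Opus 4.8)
The plan is to treat both sides as $q$-periodic meromorphic functions of the single variable $z$ (holding $x,z'$ generic), to show they have the same poles and residues, and then to remove the remaining $z$-free discrepancy by the analogous argument in $z'$ together with one specialization.

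First I would record the elementary properties of $m(x,z;q)$ that follow directly from the defining expansion \eqref{equation:mdef-eq} by shifting the summation index: the periodicity $m(x,qz;q)=m(x,z;q)$; the inversion $m(x,z;q)=x^{-1}m(x^{-1},z^{-1};q)$; and, as the key tool, the change-of-$z$ formula
\begin{equation*}
m(x,z_{1};q)-m(x,z_{0};q)=\frac{z_{0}\,\Theta_{1}^{3}\,\Theta(z_{1}/z_{0};q)\,\Theta(xz_{0}z_{1};q)}{\Theta(z_{0};q)\,\Theta(z_{1};q)\,\Theta(xz_{0};q)\,\Theta(xz_{1};q)},
\end{equation*}
each of which holds at an arbitrary base, in particular at $q^{n^{2}}$. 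Since the subtracted sum in \eqref{equation:Dn-def} does not contain $z$, periodicity gives at once that $D_{n}(x,z,z';q)$ is $q$-periodic in $z$; its only poles, per period, are the two simple poles of $m(x,\cdot;q)$, at $z\in q^{\Z}$ and $z\in q^{\Z}x^{-1}$, with $\operatorname{Res}_{z=1}m(x,z;q)=1/\Theta(x;q)$ and an analogous value at $z=x^{-1}$ dictated by the inversion formula. The right-hand side has exactly the same poles: its prefactor $1/\Theta(xz;q)$ supplies the ones on $q^{\Z}x^{-1}$, while for each $z\in q^{\Z}$ precisely one summand contributes a simple pole through the factor $1/\Theta(q^{r}z;q^{n})$, namely the one whose index $r$ matches the relevant congruence modulo $n$.

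The identity in $z$ then follows from matching residues. The difference $\Delta(z)$ of the two sides is a $q$-periodic meromorphic function whose only possible poles are the simple ones at $z=1$ and $z=x^{-1}$, so once the residues there are checked to agree, $\Delta$ is holomorphic on $\mathbb{C}^{*}$, hence descends to the torus $\mathbb{C}^{*}/q^{\Z}$ and is a constant $c=c(x,z';q)$; equivalently, $\Theta(z;q)\Theta(xz;q)\Delta(z)$ is entire with automorphy factor $x^{-1}z^{-2}$ under $z\mapsto qz$. The residue of the right-hand side at $z=1$ comes only from the $r=0$ summand: there the factor $\Theta(-q^{\binom{n}{2}}(-x)^{n}z';q^{n})$ cancels between numerator and denominator, the simple zero of $\Theta(z;q^{n})$ at $z=1$ produces the pole, and the remaining $z'$-pieces collapse through $\Theta((z')^{-1};q^{n^{2}})=-(z')^{-1}\Theta(z';q^{n^{2}})$ to leave exactly $1/\Theta(x;q)$, which is indeed $\operatorname{Res}_{z=1}m(x,z;q)$; the point $z=x^{-1}$ is handled the same way via the inversion symmetry. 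This residue bookkeeping, in which the exponents $q^{\binom{r}{2}}$, $q^{nr}$, $q^{\binom{n}{2}}$ and the prefactor $z'\Theta_{n}^{3}$ must all line up, is the part I expect to be delicate --- it is precisely the structural cancellation a blind search would miss.

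It remains to show $c\equiv 0$. I would run the same argument in the variable $z'$: both sides are meromorphic and quasi-periodic in $z'$ with period $q^{n^{2}}$, their only poles being at $z'\in q^{n^{2}\Z}$ and at the zeros of the denominator factor $\Theta(-q^{\binom{n}{2}}(-x)^{n}z';q^{n})$, with the residues at the latter controlled by applying the change-of-$z$ formula at base $q^{n^{2}}$ to the Appell functions occurring in \eqref{equation:Dn-def}; matching those residues forces $c$ to be independent of $z'$ as well, hence a function of $x,q$ alone, whose value is read off by sending $z'$ (or $z$) to a point at which all but one term of the sum drop out. A more constructive alternative, should the residue computations prove unwieldy, is to telescope directly: iterating the change-of-$z$ formula rewrites $m(x,z;q)$ as the base-$q^{n^{2}}$ sum of \eqref{equation:Dn-def} plus an explicit sum of $n$ theta quotients, which one then collapses to the single quotient of the statement using the factorization $\prod_{r=0}^{n-1}\Theta(q^{r}z;q^{n})=\Theta(z;q)\,\Theta_{n}^{n}/\Theta_{1}$ together with a Weierstrass-type three-term theta relation. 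There the difficulty migrates to establishing the collapse, but it becomes a self-contained manipulation of theta functions.
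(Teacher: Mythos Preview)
The present paper does not prove this theorem at all: it is quoted as \cite[Theorem~3.5]{HM} and used as a black box, with the $n=2,3,4$ specializations simply read off in Corollaries~\ref{corollary:msplitn2zprime}, \ref{corollary:msplitn3zprime}, \ref{corollary:msplitn4zprime}. There is therefore no in-paper argument against which to compare your proposal.

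For what it is worth, your strategy is a standard and viable one for identities of this shape. The residue bookkeeping at $z=1$ that you sketch does collapse exactly as you describe: only the $r=0$ summand contributes, the factor $\Theta(-q^{\binom{n}{2}}(-x)^n zz';q^n)$ cancels its denominator counterpart at $z=1$, the factor $\Theta(z^n/z';q^{n^2})$ becomes $-(z')^{-1}\Theta(z';q^{n^2})$, and the simple zero of $\Theta(z;q^n)$ supplies the remaining $-\Theta_n^{-3}$, leaving precisely $1/\Theta(x;q)=\operatorname{Res}_{z=1}m(x,z;q)$; the pole at $xz=1$ then follows from the inversion you state (which is correct). The genuine work sits in the $z'$-step: on the left there are $n$ distinct pole-orbits $X_r z'\in q^{n^2\Z}$ coming from the $n$ Appell terms, together with a shared orbit at $z'\in q^{n^2\Z}$, and these must all be matched against the two denominator factors $\Theta(z';q^{n^2})$ and $\Theta(-q^{\binom{n}{2}}(-x)^n z';q^n)$ on the right. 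Your fallback --- iterate the change-of-$z$ formula to telescope $m(x,z;q)$ into the base-$q^{n^2}$ sum plus an explicit theta remainder, then collapse the remainder via \eqref{equation:j-mod} and a Weierstrass relation --- is in fact closer in spirit to how \cite{HM} argues, and is probably the cleaner route if the $z'$-residue matching becomes unwieldy.
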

\noindent  If one lets $n$ be an odd prime number, then one can think of \cite[Theorem $3.5$]{HM} as splitting the Appell function to determine on which arithmetic progressions of Fourier coefficients the mockness lives.  This was the idea behind an elementary proof \cite{HM2} of celebrated results of Bringmann and Ono \cite{BrO2}. There is also a counterpart to \cite[Theorem $3.5$]{HM} which involves summing over roots of unity \cite[Theorem $3.9$]{HM}, but we will not need it here.

 The idea behind the proofs in \cite{Mo2018} is straightforward.  Once one has the Appell function forms of the tenth-order mock theta functions, one regroups the Appell functions by using (\ref{equation:Dn-def}) and then replaces them with the appropriate sums of quotients of theta functions given by Theorem \ref{theorem:msplit-general-n}.  Each of the six identities is then reduced to proving a theta function identity which can be verified through repeated use of the three-term Weierstrass relation for theta functions \cite[(1.)]{We}, \cite{Ko}, see (\ref{equation:Weierstrass}).

In terms of Appell functions, two of Ramanujan's sixth-order mock theta functions  \cite{AH}, \cite[Section 5]{HM} read
\begin{align*}
\phi(q)&:=\sum_{n\ge 0}\frac{(-1)^nq^{n^2}(q;q^2)_n}{(-q)_{2n}}=2m(q,-1;q^3),\\
\psi(q)&:=\sum_{n\ge 0}\frac{(-1)^nq^{(n+1)^2}(q;q^2)_n}{(-q)_{2n+1}}=m(1,-q;q^3).
\end{align*}

\noindent There are numerous identities for the sixth-order functions in the lost notebook \cite{AH, BC}, but there is only one similar to the tenth-order identities (\ref{equation:tenth-id-1})-(\ref{equation:tenth-id-6}), see \cite[p. 135, Entry 7.4.2]{ABV}, \cite[p. 13, equation 5b]{RLN}
\begin{equation}
\phi(q^9)-\psi(q)-q^{-3}\psi(q^9)=\frac{\overline{\Theta}_{3,12}\Theta_{6}^2}{\overline{\Theta}_{1,4}\overline{\Theta}_{9,36}}
\label{equation:RLN6-A}.
\end{equation}

Of course, there are many more sixth-order mock theta functions in the lost notebook \cite{AH, BC}.   It is natural to ask if they too enjoy identities similar to (\ref{equation:RLN6-A}).   Surprisingly this is true, and the identities were only recently discovered.

\begin{theorem}\cite[Theorem 1.2]{Mo2022} \label{theorem:main} The following identities for the sixth-order mock theta functions $\rho(q)$, $\sigma(q)$, $\lambda(q)$, $\mu(q)$, $\phi_{\_}(q)$, and $\psi_{\_}(q)$ are true
\begin{align}
q\rho(q)+q^3\rho(q^9)-2\sigma(q^9)
&=q\frac{\Theta_{3,6}\Theta_{3}^2}{\Theta_{1,2}\Theta_{9,18}},
\label{equation:newSixth-1}\\
q\lambda(q)+q^{3}\lambda(q^9)-2\mu(q^{9})
&=-\frac{\Theta_{3,6}\Theta_{6}^2}{\overline{\Theta}_{1,4}\overline{\Theta}_{9,36}},
\label{equation:newSixth-2}\\
\psi\_(q)+q^{-3}\psi\_(q^9)-\phi\_(q^9)
&=q\frac{\overline{\Theta}_{3,12}\Theta_{3}^2}{\Theta_{1,2}\Theta_{9,18}},
\label{equation:newSixth-3}
\end{align}
where the sixth-order mock theta functions  $\rho(q)$, $\sigma(q)$, $\lambda(q)$, $\mu(q)$, $\phi_{\_}(q)$, and $\psi_{\_}(q)$ are all found in Ramanujan's lost notebook.
\end{theorem}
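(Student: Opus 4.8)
The plan is to follow the strategy used for the tenth-order identities (\ref{equation:tenth-id-1})--(\ref{equation:tenth-id-6}) in \cite{Mo2018} and for (\ref{equation:newSixth-1})--(\ref{equation:newSixth-3}) in \cite{Mo2022}: pass to Appell functions, regroup via (\ref{equation:Dn-def}), substitute Theorem \ref{theorem:msplit-general-n}, and finish with theta-function algebra. Concretely, one first records the Appell-function representations of the six sixth-order functions $\rho,\sigma,\lambda,\mu,\phi_{\_},\psi_{\_}$, which are collected in \cite{AH} and \cite[Section 5]{HM}; up to an elementary power of $q$ and a quotient of theta functions, each is a single $m(x,z;Q)$ with base modulus $Q$ a fixed monomial in $q$ (for instance $Q=q^{3}$ for $\phi$ and $\psi$ in (\ref{equation:RLN6-A})). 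Since $Q$ is a monomial, replacing $q\mapsto q^{9}$ turns such a term into an $m(\,\cdot\,;Q^{9})$, so the argument-$q^{9}$ functions have base modulus $Q^{9}$. Hence, after clearing elementary factors, each left-hand side of (\ref{equation:newSixth-1})--(\ref{equation:newSixth-3}) is a $\mathbb{C}$-linear combination of one $m(\,\cdot\,;Q)$ term and two $m(\,\cdot\,;Q^{9})$ terms.

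Next I would match that combination with $D_{3}(x,z,z';Q)$ from (\ref{equation:Dn-def}). With $n=3$, (\ref{equation:Dn-def}) reads
\begin{equation*}
D_{3}(x,z,z';Q)=m(x,z;Q)-\sum_{r=0}^{2}Q^{-\binom{r+1}{2}}(-x)^{r}\,m\bigl(-Q^{3-3r}(-x)^{3},\,z';\,Q^{9}\bigr),
\end{equation*}
that is, one term at modulus $Q$ and three at modulus $Q^{9}$. The $m(x,z;Q)$ term is to agree with the argument-$q$ function; of the three modulus-$Q^{9}$ terms, two are arranged to coincide (up to elementary factors) with the two argument-$q^{9}$ functions, and the third is reduced to one of those, or to a pure quotient of theta functions, using the elementary functional equations for $m(x,z;Q)$ established in \cite{HM} (the reflection $m(x,z;Q)=x^{-1}m(x^{-1},z^{-1};Q)$, the $Q$-shift, and the change-of-$z$ formula). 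The free parameter $z'$ is chosen to make these identifications possible; pinning down the correct $(x,z,z')$ and the correct auxiliary theta quotients for each of the three identities is the bookkeeping heart of the argument.

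Once the left-hand side of each identity is written as $D_{3}(x,z,z';Q)$ plus known theta quotients, Theorem \ref{theorem:msplit-general-n} expresses $D_{3}$ as $z'\Theta_{3}^{3}$ times a sum of three quotients of theta functions. Since the right-hand sides of (\ref{equation:newSixth-1})--(\ref{equation:newSixth-3}) are single quotients of theta functions, the whole problem collapses to a pure theta-function identity asserting that this three-term sum (together with the auxiliary quotients picked up above) simplifies to one term. I would prove that identity exactly as in \cite{Mo2018, Mo2022}, by repeated use of the three-term Weierstrass relation (\ref{equation:Weierstrass}): each application trades a two-term combination of products of four theta functions for a one-term combination, and a short sequence of such moves, together with the elliptic transformation $\Theta(Q^{k}x;Q)=(-1)^{k}Q^{-\binom{k}{2}}x^{-k}\Theta(x;Q)$ to align arguments, should reduce the three-term sum to the claimed single quotient.

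The main obstacle is this last step combined with the parameter matching: one must guess the right specialization of $(x,z,z')$ and the right intermediate Weierstrass applications so that the massive cancellation actually occurs, and since the three identities pair genuinely different functions ($\rho$ with $\rho$, $\lambda$ with $\lambda$, $\psi_{\_}$ with $\phi_{\_}$) there is no uniform shortcut and the computation must be carried out three times. A secondary technical point is that the Appell representations of some of these sixth-order functions carry extra theta-function prefactors, so before applying (\ref{equation:Dn-def}) one may first need an identity of the shape $m(x,z;Q)=m(x,z_{0};Q)+(\text{theta quotient})$ to normalize the $z$-parameters of the three $m$'s occurring in a given identity.
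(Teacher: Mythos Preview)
This paper does not prove Theorem~\ref{theorem:main}; it is quoted from \cite{Mo2022} as motivation, so there is no proof here to compare your proposal against.

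That said, your outline faithfully reproduces the methodology the introduction attributes to \cite{Mo2018,Mo2022} for identities of this shape: rewrite each mock theta function as an Appell function via \cite[Section~5]{HM}, regroup the resulting $m$-terms into a $D_{3}(x,z,z';Q)$ via (\ref{equation:Dn-def}), apply Theorem~\ref{theorem:msplit-general-n} (equivalently Corollary~\ref{corollary:msplitn3zprime}) to turn $D_{3}$ into a three-term sum of theta quotients, and collapse that sum with the Weierstrass relation (\ref{equation:Weierstrass}). As a strategy this is correct and is exactly what the present paper carries out in Sections~\ref{section:N3-proofs} and \ref{section:ThetaIds-proofs} for its own $D_{3}$ specializations.

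As a proof, however, what you have written is only a plan. You explicitly defer both substantive steps: choosing the specialization $(x,z,z')$ so that the three $m(\,\cdot\,;Q^{9})$ terms of $D_{3}$ line up with the two argument-$q^{9}$ functions (and disposing of the leftover third term via the functional equations for $m$), and then executing the Weierstrass reductions that collapse the three-term theta sum to a single quotient. Your own phrases ``the bookkeeping heart of the argument'' and ``one must guess the right specialization \dots\ there is no uniform shortcut'' concede that nothing here verifies these steps actually succeed. To make this a proof you would have to carry out those computations explicitly for each of (\ref{equation:newSixth-1})--(\ref{equation:newSixth-3}), or else cite \cite{Mo2022} directly.
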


Although properties such as Theorem \ref{theorem:msplit-general-n} are not explicitly present in the lost notebook, there are hints.  The following identity for $g(x;q)$ can be found in the lost notebook \cite[p. $32$]{RLN}, \cite[$(12.5.3)$]{ABI} and resembles a special case of Theorem \ref{theorem:msplit-general-n}:
\begin{equation*}
g(x;q)=-x^{-1}+qx^{-3}g(-qx^{-2};q^4)-qg(-qx^2;q^4)+\frac{(q^2;q^2)_{\infty}^5}{x(q^4;q^4)_{\infty}^2\Theta(x;q)\Theta(-qx^2;q^2)}.
\end{equation*}
There are also identities for $g(x;q)$ in the lost notebook that resemble special cases of \cite[Theorem $3.9$]{HM} where one sums Appell functions over roots of unity, see  \cite[p. $39$]{RLN}, \cite[$(12.4.4)$]{ABI}:
\begin{equation*}
g(x;q)+g(-x;q)=-2qg(-qx^2;q^4)
+\frac{2(q^2;q^2)_{\infty}^5}{(q)_{\infty}^2\Theta(-qx^2;q^4)\Theta(x^2;q^2)}.
\end{equation*}
We note that both identities for $g(x;q)$ involve a single quotient of theta functions.

What we want to do here is to find families of specializations of Theorem \ref{theorem:msplit-general-n} which yield single-quotients of theta functions for $n=2$, $3$, and $4$.   We already have two examples, but would like to find more.  From \cite[Corollaries $3.7$, $3.8$]{HM}, we have
\begin{gather*}
D_{2}(x,z,z^4;q)=D_2(x,x^{-1}z^{-1},z^4;q)=-\frac{(q^2;q^2)_{\infty}(q^4;q^4)_{\infty}\Theta(-xz^2;q)\Theta(-xz^3;q)}{x\Theta(xz;q)\Theta(z^4;q^4)\Theta(-qx^2z^4;q^2)},\\
D_{3}(x,-1,-1;q)=D_2(x,x^{-1},-1;q)=\frac{x(q;q)_{\infty}(q^3;q^3)_{\infty}^2(q^6;q^6)_{\infty}(q^9;q^9)_{\infty}\Theta(qx^2;q^2)}{2q(q^2;q^2)_{\infty}^2(q^{18};q^{18})_{\infty}^2\Theta(-x^3;q^3)}.
\end{gather*}

  In Sections \ref{section:N2}, \ref{section:N3}, and \ref{section:N4}, we state families of specializations of Theorem \ref{theorem:msplit-general-n} for $n=2$, $3$ and $4$, that yield single quotients of theta functions.  In Section \ref{section:prelim}, we recall necessary facts about theta functions and develop a few auxiliary identities.   In Section \ref{section:N2-proofs}, \ref{section:N3-proofs}, and \ref{section:N4-proofs} respectively we prove the identities. In Section \ref{section:ThetaIds} we list theta function identities that are byproducts of the proofs found in Sections \ref{section:N3-proofs} and \ref{section:N4-proofs}.  In Section \ref{section:ThetaIds-proofs}, we prove the new theta function identities.

\section*{Acknowledgements}
We would like to thank Dean Hickerson for his helpful comments and suggestions.  This work was supported by the Theoretical Physics and Mathematics Advancement Foundation BASIS, agreement No. 20-7-1-25-1.

\section{Statement of results: the case $n=2$}\label{section:N2}

We point out the $n=2$ specialization of \cite[Theorem $3.5$]{HM} and then present a list of families of specializations where each family evaluates to a single quotient of theta functions.
\begin{corollary} \label{corollary:msplitn2zprime} For generic $x,z,z'\in \mathbb{C}^*$ 
{\allowdisplaybreaks \begin{align}
D_2&(x,z,z';q) \label{equation:msplit2}\\
&=\frac{z'(q^2;q^2)_{\infty}^3}{\Theta(xz;q)\Theta(z';q^4)}\Big [
\frac{\Theta(-qx^2zz';q^2)\Theta(z^2/z';q^{4})}{\Theta(-qx^2z';q^2)\Theta(z;q^2)}
-xz \frac{\Theta(-q^2x^2zz';q^2)\Theta(q^2z^2/z';q^{4})}{\Theta(-qx^2z';q^2)\Theta(qz;q^2)}\Big ],\notag
\end{align}}%
where
\begin{equation}
D_2(x,z,z';q):=m(x,z;q)-m(-qx^2,z';q^4 )+q^{-1}xm(-q^{-1}x^2,z';q^4).\label{equation:D2-def}
\end{equation}
\end{corollary}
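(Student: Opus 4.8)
The plan is to obtain Corollary~\ref{corollary:msplitn2zprime} as the direct specialization $n=2$ of Theorem~\ref{theorem:msplit-general-n}, so the work is bookkeeping rather than new ideas. First I would write out the general formula with $n=2$. On the left-hand side, $D_2(x,z,z';q)$ from \eqref{equation:Dn-def} has the sum $\sum_{r=0}^{1} q^{-\binom{r+1}{2}}(-x)^r m\big(-q^{\binom{2}{2}-2r}(-x)^2, z';q^4\big)$; since $\binom{2}{2}=1$, the $r=0$ term is $m(-qx^2,z';q^4)$ and the $r=1$ term is $q^{-1}(-x)\,m(-q^{-1}x^2,z';q^4)$, so the left side is exactly $m(x,z;q)-m(-qx^2,z';q^4)+q^{-1}x\,m(-q^{-1}x^2,z';q^4)$, matching \eqref{equation:D2-def}.

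Next I would specialize the right-hand side. With $n=2$ we have $\Theta_n=\Theta_2=(q^2;q^2)_\infty$, so $\Theta_n^3=(q^2;q^2)_\infty^3$; also $\binom{2}{2}=1$, $(-x)^2=x^2$, $q^{n^2}=q^4$, and the outer factor $z'/[\Theta(xz;q)\Theta(z';q^4)]$ appears as written. The sum $\sum_{r=0}^{1}$ has two terms. For $r=0$: $q^{\binom{0}{2}}(-xz)^0=1$, the first theta in the numerator is $\Theta(-q^{\binom{2}{2}+0}x^2 z z';q^2)=\Theta(-qx^2zz';q^2)$, the second is $\Theta(q^{0}z^2/z';q^4)=\Theta(z^2/z';q^4)$, and the denominator contributes $\Theta(-q^{\binom{2}{2}}x^2 z';q^2)\Theta(q^0 z;q^2)=\Theta(-qx^2z';q^2)\Theta(z;q^2)$. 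For $r=1$: $q^{\binom{1}{2}}(-xz)^1=-xz$, the first numerator theta is $\Theta(-q^{\binom{2}{2}+1}x^2 z z';q^2)=\Theta(-q^2x^2zz';q^2)$, the second is $\Theta(q^{2}z^2/z';q^4)$, and the denominator contributes $\Theta(-qx^2z';q^2)\Theta(qz;q^2)$. Assembling these two terms reproduces the bracketed expression in \eqref{equation:msplit2} verbatim, completing the proof.

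The only subtlety worth double-checking is the handling of signs under the substitution $(-x)^n$, $(-q^{\binom{n}{2}})(-x)^n$, etc., when $n$ is even versus when it is odd; here $n=2$ is even, so $(-x)^2=x^2$ and $-q^{\binom{2}{2}}(-x)^2=-qx^2$ keep the minus signs shown, and one must be careful that $-q^{\binom{n}{2}+r}(-x)^n$ becomes $-q^{1+r}x^2$ rather than acquiring a spurious sign. I expect this signs-and-exponents verification to be the main (and essentially only) obstacle, and it is routine. No appeal to the three-term Weierstrass relation or to Corollaries~3.7--3.8 of \cite{HM} is needed for this statement; it is purely the $n=2$ instance of Theorem~\ref{theorem:msplit-general-n}.
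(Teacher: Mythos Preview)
Your proposal is correct and matches the paper's approach exactly: the paper states Corollary~\ref{corollary:msplitn2zprime} without proof, introducing it simply as ``the $n=2$ specialization of \cite[Theorem 3.5]{HM},'' and your term-by-term verification of both \eqref{equation:D2-def} and \eqref{equation:msplit2} from the general formula in Theorem~\ref{theorem:msplit-general-n} is precisely the routine bookkeeping implicit in that statement. Your sign and exponent checks (in particular $(-x)^2=x^2$, $\binom{2}{2}=1$, $q^{\binom{1}{2}}=1$) are all correct.
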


We have the following list of families of specializations that yield single quotients of theta functions.
\begin{theorem} \label{theorem:caseN2} We have
{\allowdisplaybreaks \begin{gather}
D_2(x,z,z^2;q)=-\frac{xz^3(q^2;q^2)_{\infty}^3\Theta(-q^2x^2z^3;q^2)\Theta(q^2;q^{4})}
{\Theta(xz;q)\Theta(z^2;q^4)\Theta(-qx^2z^2;q^2)\Theta(qz;q^2)}, 
\label{equation:idN2-1}\\
D_2(x,z,z^4;q)=-\frac{(q^2;q^2)_{\infty}(q^4;q^4)_{\infty}\Theta(-xz^2;q)\Theta(-xz^3;q)}{x\Theta(xz;q)\Theta(z^4;q^4)\Theta(-qx^2z^4;q^2)},
\label{equation:idN2-2}\\
D_2(x,z,x^{-1};q)=-\frac{z(q;q)_{\infty}^3\Theta(-qxz^2;q^2)}
{\Theta(xz;q)\Theta(-qx;q^2)\Theta(z;q)},
\label{equation:idN2-3}\\
D_2(x,z,x^{-2};q)=-\frac{(q;q)_{\infty}^3\Theta(-xz^2;q^2)}{\Theta(xz;q)\Theta(-x;q^2)\Theta(z;q)},
\label{equation:idN2-4}\\
D_2(u^3,z,u^{-4};q)=
-\frac{(q^2;q^2)_{\infty}(q^4;q^4)_{\infty}\Theta(-u^2z;q) \Theta(u;q)\Theta(-zu;q)}
{\Theta(u^3z;q)\Theta(u^{4};q^4)\Theta(-qu^2;q^2)\Theta(z;q)},
\label{equation:idN2-5}\\
D_2(qz^{-2},z,z^3;q^3)=-\frac{q^{-1}z^2(q^3;q^3)_{\infty}^4(q^6;q^6)_{\infty}\Theta(z;q^4)}
{(q;q)_{\infty}\Theta(q^2z;q^3)\Theta(z^3;q^{12})\Theta(-qz;q^6)\Theta(z;q^3)},
\label{equation:idN2-6}\\
D_2(q^2z^{-2},z,z^3;q^3)=- \frac{q^{-1}z(q^3;q^3)_{\infty}^4(q^6;q^6)_{\infty}\Theta(z;q^4)}
{(q;q)_{\infty}\Theta(qz;q^3)\Theta(z^3;q^{12})\Theta(-q^5z;q^6)\Theta(z;q^3)},
\label{equation:idN2-7}\\
D_2(x,z,-qx^{-2}z^{-2};q)=-\frac{x^{-1}(q^2;q^2)_{\infty}^3\Theta(-xz^2;q)}
  {\Theta(xz;q)\Theta(-q^3x^{2}z^{2};q^4)\Theta(z^{2};q^2)}.
 \label{equation:idN2-8}
\end{gather}}%
\end{theorem}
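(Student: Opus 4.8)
The plan is to prove each of the eight identities by substituting the indicated values of $x,z,z'$ into the general $n=2$ formula of Corollary~\ref{corollary:msplitn2zprime} --- replacing $q$ by $q^3$ in (\ref{equation:idN2-6}) and (\ref{equation:idN2-7}) --- and then collapsing the resulting sum of two theta quotients into the single quotient claimed. In (\ref{equation:msplit2}) the prefactor $z'(q^2;q^2)_\infty^3/\big(\Theta(xz;q)\Theta(z';q^4)\big)$ and the factor $\Theta(-qx^2z';q^2)$ common to the denominators of the two bracketed terms already reproduce most of the right-hand side, so in each case the substance of the identity is the claim that, after the substitution, the bracketed two-term expression in (\ref{equation:msplit2}) equals the remaining single quotient of theta functions. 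One case is immediate: in (\ref{equation:idN2-1}) the choice $z'=z^2$ gives $z^2/z'=1$, so $\Theta(1;q^4)=0$ annihilates the first term and the surviving term is exactly the quotient on the right. Another, (\ref{equation:idN2-2}), is the known evaluation recalled in the introduction from \cite{HM} and need not be reproved.

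For the remaining six identities both bracketed terms survive and must be merged into one. First I would normalize the argument of every theta function by the quasi-periodicity and inversion relations $\Theta(qw;q)=-w^{-1}\Theta(w;q)$ and $\Theta(w;q)=\Theta(q/w;q)$; for example, in (\ref{equation:idN2-6}) (after $q\mapsto q^3$) this is what lets one recognize the theta $\Theta(-q^5z^{-1};q^6)$ produced by the substitution as $\Theta(-qz;q^6)$. Next I would clear the common denominator $\Theta(z;q^2)\Theta(qz;q^2)$ of the two terms, using $\Theta(z;q^2)\Theta(qz;q^2)=(q^2;q^2)_\infty^2\Theta(z;q)/(q;q)_\infty$ to account for the $\Theta(z;q)$ that occurs on several of the right-hand sides. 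Each identity is then reduced to a theta-function identity asserting that a difference of two length-three products of theta functions --- in the mixed bases $\{q,q^2,q^4\}$ (or $\{q^3,q^6,q^{12}\}$) --- equals a single product. The decisive step is to pass to a common base: duplication formulas such as $\Theta(w;q)\Theta(-w;q)=(q;q^2)_\infty^2(q^2;q^2)_\infty\Theta(w^2;q^2)$, together with the elementary product identities recalled in Section~\ref{section:prelim}, collapse the base-$q^2$ pairs into single base-$q^4$ thetas. It is precisely the point of these particular specializations that the two surviving products then have the \emph{same} product of arguments; at that stage the three-term Weierstrass relation (\ref{equation:Weierstrass}) applies and rewrites the difference of two products as a single product, which one matches factor by factor against the stated right-hand side. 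The identities (\ref{equation:idN2-6}) and (\ref{equation:idN2-7}) are handled this way after $q\mapsto q^3$, while (\ref{equation:idN2-8}) is marginally different: there the numerator factors $\Theta(-qx^2zz';q^2)$ and $\Theta(-q^2x^2zz';q^2)$ simplify outright against the two denominators, leaving only base-$q^4$ thetas, and those are combined directly by the $2$-dissection $\Theta(-w;q)=\Theta(-qw^2;q^4)+w\,\Theta(-q^3w^2;q^4)$.

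I expect the main obstacle to be organizational rather than conceptual. For each of (\ref{equation:idN2-3})--(\ref{equation:idN2-7}) one has to choose the right succession of quasi-period, inversion and duplication moves so that the two length-three products reduce to length-two products sharing a common product of arguments, and then pin down the exact instance of the Weierstrass relation that telescopes the difference to the claimed single product; keeping the mixed bases $\{q,q^2,q^4\}$ and $\{q^3,q^6,q^{12}\}$ straight through these manipulations is where mistakes are most likely. A minor point to address is that some specializations --- $z'=x^{-1}$, $z'=x^{-2}$, $z'=u^{-4}$, $z'=-qx^{-2}z^{-2}$ --- restrict the parameters to a proper subvariety; but since none of them makes a denominator in (\ref{equation:msplit2}) vanish identically, Corollary~\ref{corollary:msplitn2zprime} remains valid after the substitution and each identity then holds as an identity of meromorphic functions. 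Finally, the symmetry $D_2(x,z,z';q)=D_2(x,x^{-1}z^{-1},z';q)$ noted in the introduction can be used to shorten a couple of the verifications, for instance to produce a companion of (\ref{equation:idN2-3}) whose second argument is $x^{-1}z^{-1}$.
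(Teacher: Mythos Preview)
Your plan matches the paper's proof almost exactly for (\ref{equation:idN2-1}), (\ref{equation:idN2-2}), (\ref{equation:idN2-3}), (\ref{equation:idN2-4}), (\ref{equation:idN2-5}) and (\ref{equation:idN2-8}). In particular: for (\ref{equation:idN2-1}) the paper also notes that $\Theta(1;q^4)=0$ kills one term; for (\ref{equation:idN2-2}) it simply cites \cite[Corollary~3.7]{HM}; for (\ref{equation:idN2-3}) it combines fractions and applies the Weierstrass relation (\ref{equation:Weierstrass}); for (\ref{equation:idN2-4}) and (\ref{equation:idN2-5}) it uses the base-change identity (\ref{equation:H1Thm1.1}) (which plays the same role as your ``duplication + Weierstrass'' step); and for (\ref{equation:idN2-8}) it does exactly what you describe, cancelling the numerators against the denominators and then invoking the $2$-dissection (\ref{equation:j-split}).

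The one genuine gap is in (\ref{equation:idN2-6}) and (\ref{equation:idN2-7}). After the substitution $q\mapsto q^3$ and the routine clean-up, the bracketed two-term sum that must be collapsed is
\[
q\,\Theta(-q;q^6)\Theta(z;q^{12})\Theta(q^3z;q^6)+\Theta(-q^2;q^6)\Theta(q^6z;q^{12})\Theta(z;q^6),
\]
and the claimed single quotient on the right-hand side contains the factor $\Theta(z;q^4)$. Here the bases $q^4$, $q^6$, $q^{12}$ are genuinely incommensurable, and your scheme of ``pass to a common base and apply Weierstrass'' does not go through: Weierstrass (\ref{equation:Weierstrass}) requires four products in a \emph{single} base, and no combination of (\ref{equation:j-mod}), (\ref{equation:j-roots}), or the quadratic identities in Section~\ref{section:prelim} reduces the above to that shape. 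The paper handles this by first isolating precisely this three-term identity as a separate corollary, equation~(\ref{equation:ASD-cor1}), and proving it by the Atkin--Swinnerton--Dyer zero-counting method (Proposition~\ref{proposition:H1Thm1.7}): the left side satisfies $f(q^{12}z)=-q^{-12}z^{-3}f(z)$, hence has at most three zeros in $|q|^{12}<|z|\le 1$ unless it vanishes identically, and one checks vanishing at $z=1,q^3,q^6,q^9$. You will need this (or an equivalent device) for (\ref{equation:idN2-6}) and (\ref{equation:idN2-7}); the obstacle here is conceptual, not merely organizational.
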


\section{Statement of results: the case $n=3$}\label{section:N3}
We point out the $n=3$ specialization of \cite[Theorem $3.5$]{HM} and then present a list of families of specializations where each family evaluates to a single quotient of theta functions.

\begin{corollary} \label{corollary:msplitn3zprime} For generic $x,z,z'\in \mathbb{C}^*$ 
\begin{align}
D_3(x,z,z';q)&=\frac{z'(q^3;q^3)_{\infty}^3}{\Theta(xz;q)\Theta(z';q^{9})\Theta(x^3z';q^3)}\Big [ 
\frac{1}{z}\frac{\Theta(x^3zz';q^3)\Theta(z^3/z';q^{9})}{\Theta(z;q^3)}\label{equation:msplit3} \\
&\ \ \ \ \ -\frac{x}{q}\frac{\Theta(qx^3zz';q^3)\Theta(q^{3}z^3/z';q^{9})}{\Theta(qz;q^3)}
+\frac{x^2z}{q}\frac{\Theta(q^2x^3zz';q^3)\Theta(q^{6}z^3/z';q^{9})}{\Theta(q^2z;q^3)}\Big ],\notag
\end{align}
where
\begin{align}
D_3(x,z,z';q)&:=m(x,z;q)-m (q^{3}x^3,z';q^{9} )\label{equation:D3-def}\\
&\ \ \ \ \  +q^{-1}xm (x^3,z';q^{9} )-q^{-3}x^2m (q^{-3}x^3,z';q^{9} ).\notag
\end{align}
\end{corollary}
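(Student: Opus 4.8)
\emph{Proof sketch.}\ The statement is nothing more than the $n=3$ instance of Theorem~\ref{theorem:msplit-general-n}, written with a slightly renormalized set of theta factors. The plan is therefore purely computational: specialize $n=3$ in both \eqref{equation:Dn-def} and the displayed formula of Theorem~\ref{theorem:msplit-general-n}, expand the finite sums over $r\in\{0,1,2\}$, and tidy the theta arguments with the elementary functional equation $\Theta(q^{3}w;q^{3})=-w^{-1}\Theta(w;q^{3})$, which will be recorded among the preliminary facts in Section~\ref{section:prelim}.

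First I would recover \eqref{equation:D3-def}. Putting $n=3$ in \eqref{equation:Dn-def} gives $\binom{3}{2}=3$, $q^{n^{2}}=q^{9}$, and $(-x)^{3}=-x^{3}$, so the three summands $r=0,1,2$ are $m(q^{3}x^{3},z';q^{9})$, $-q^{-1}x\,m(x^{3},z';q^{9})$, and $q^{-3}x^{2}\,m(q^{-3}x^{3},z';q^{9})$; subtracting these from $m(x,z;q)$ is exactly \eqref{equation:D3-def}.

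Next I would specialize the right-hand side of Theorem~\ref{theorem:msplit-general-n}. With $n=3$ one has $\Theta_{n}^{3}=(q^{3};q^{3})_{\infty}^{3}$, $-q^{\binom{n}{2}}(-x)^{n}z'=q^{3}x^{3}z'$, and $-q^{\binom{n}{2}+r}(-x)^{n}zz'=q^{3+r}x^{3}zz'$, so the formula reads
\[
\frac{z'(q^{3};q^{3})_{\infty}^{3}}{\Theta(xz;q)\,\Theta(z';q^{9})\,\Theta(q^{3}x^{3}z';q^{3})}\sum_{r=0}^{2}\frac{q^{\binom{r}{2}}(-xz)^{r}\,\Theta(q^{3+r}x^{3}zz';q^{3})\,\Theta(q^{3r}z^{3}/z';q^{9})}{\Theta(q^{r}z;q^{3})}.
\]
Applying $\Theta(q^{3}w;q^{3})=-w^{-1}\Theta(w;q^{3})$ to the constant denominator factor, $\Theta(q^{3}x^{3}z';q^{3})=-(x^{3}z')^{-1}\Theta(x^{3}z';q^{3})$, and, inside the sum, to $\Theta(q^{3+r}x^{3}zz';q^{3})=\Theta(q^{3}\cdot q^{r}x^{3}zz';q^{3})=-q^{-r}(x^{3}zz')^{-1}\Theta(q^{r}x^{3}zz';q^{3})$, the $r$-independent prefactor returns to $z'(q^{3};q^{3})_{\infty}^{3}/\big(\Theta(xz;q)\Theta(z';q^{9})\Theta(x^{3}z';q^{3})\big)$, while the scalar multiplying the $r$-th theta quotient collapses to $1/z$, $-x/q$, and $x^{2}z/q$ for $r=0,1,2$. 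This is precisely the bracketed expression in \eqref{equation:msplit3}.

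There is no genuine obstacle beyond the bookkeeping of those scalar prefactors and a routine check that the powers of $z'$ and of $(q^{3};q^{3})_{\infty}$ combine as stated. The substantive work of this part of the paper lies not in the corollary itself but in the subsequent specializations of it presented in Section~\ref{section:N3}, where each bracket of three theta quotients is reduced to a single quotient by repeated use of the three-term Weierstrass relation for theta functions.
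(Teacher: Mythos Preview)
Your proposal is correct and matches the paper's treatment exactly: the corollary is stated simply as the $n=3$ specialization of Theorem~\ref{theorem:msplit-general-n}, with no separate proof given, and your computation---specializing $n=3$ in \eqref{equation:Dn-def} and in the theta sum, then applying \eqref{equation:j-elliptic} with $q\mapsto q^{3}$, $n=1$ to both $\Theta(q^{3}x^{3}z';q^{3})$ and each $\Theta(q^{3+r}x^{3}zz';q^{3})$---is precisely the routine bookkeeping that justifies the displayed form \eqref{equation:msplit3}. Your closing remark that the substantive work lies in the later specializations is also accurate.
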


We have the following list of families of specializations that yield single quotients of theta functions.

\newpage
\begin{theorem}\label{theorem:caseN3} We have
 \begin{align}
D_{3}(z^{-4},z,q^3z^9;q)
&=D_{3}(z^{-4},z^3,q^3z^9;q)\label{equation:idN3-1}\\
&=\frac{z^5(q;q)_{\infty}(q^3;q^3)_{\infty}^4}{\Theta(z;q)\Theta(z^3;q^3)\Theta(q^2z^3;q^3)\Theta(q^3z^9;q^9)},
\notag\\
D_{3}(z^{-4},z,q^6z^9;q)
&=D_{3}(z^{-4},z^3,q^6z^9;q)\label{equation:idN3-2}\\
&=\frac{z^3(q;q)_{\infty}(q^3;q^3)_{\infty}^4}{\Theta(z;q)\Theta(z^3;q^3)\Theta(qz^3;q^3)\Theta(q^6z^9;q^9)},
\notag \\
D_{3}(u^{-5},u^2,q^3u^9;q)
&=D_{3}(u^{-5},u^3,q^3u^9;q)\label{equation:idN3-3}\\
&=\frac{u^7(q;q)_{\infty}(q^3;q^3)_{\infty}^4}{\Theta(u;q)\Theta(u^6;q^3)\Theta(q^2u^3;q^3)\Theta(q^3u^9;q^9)},
\notag \\
D_{3}(u^{-5},u^2,q^6u^9;q)
&=D_{3}(u^{-5},u^3,q^6u^9;q)\label{equation:idN3-4}\\
&=\frac{u^5(q;q)_{\infty}(q^3;q^3)_{\infty}^4}{\Theta(u;q)\Theta(u^6;q^3)\Theta(qu^3;q^3)\Theta(q^6u^9;q^9)},
\notag \\
D_{3}(qz^{-4},z,z^9;q^2)
&=D_{3}(qz^{-4},qz^3,z^9;q^2)\label{equation:idN3-5}\\
&=-\frac{q^{-1}z^4(q^2;q^2)_{\infty}^2\Theta(q^3;q^{18})\Theta(-z;q)}{(q;q)_{\infty}\Theta(qz^3;q^2)\Theta(z^9;q^{18})},
\notag \\
D_{3}(x,q,q^3x^{-3};q^2)
&=D_{3}(x,q^{-1}x^{-1},q^3x^{-3};q^2)\label{equation:idN3-6}\\
&=-\frac{x^{-1}(q^6;q^6)_{\infty}^3\Theta(x;q^2)\Theta(q^4x^2;q^6)}
{(q^3;q^3)_{\infty}\Theta(x;q)\Theta(q^2x;q^6)\Theta(q^3x^{-3};q^{18})},
\notag \\
D_{3}(qz^{-2},z,z^3;q^2)
&=D_{3}(qz^{-2},qz,z^3;q^2)\label{equation:idN3-7}\\
&=-\frac{q^{-1}z(q^3;q^3)_{\infty}(q^6;q^6)_{\infty}^2\Theta(qz;q^2)\Theta(z^2;q^6)}
{\Theta(z;q)\Theta(q^3z;q^6)\Theta(q^3z^{3};q^{6})\Theta(z^{3};q^{18})},
\notag\\
D_{3}(x,q,q^9;q^2)
&=D_{3}(x,qx^{-1},q^9;q^2)\label{equation:idN3-8}\\
&=-\frac{qx^{-1}\Theta(-q;q^4)(q^3;q^3)_{\infty}(q^6;q^6)_{\infty}^2\Theta(x^2;q)}
{\Theta(q^9;q^{18})\Theta(x;q)\Theta(qx^{2};q^{2})\Theta(q^3x^{3};q^{6})}
\notag\\
D_{3}(qz^{-3},z,q^9z^6;q^2)
&=D_{3}(qz^{-3},qz^2,q^9z^6;q^2)\label{equation:idN3-9}\\
&=-\frac{z\Theta(-q;q^4)(q^3;q^3)_{\infty}(q^6;q^6)_{\infty}^2\Theta(z^2;q^3)}
{\Theta(qz^2;q^2)\Theta(z;q^{3})\Theta(z^{3};q^{6})\Theta(q^9z^6;q^{18})},
\notag\\
D_{3}(qu^{-5},u^3,u^9;q^2)
&=D_{3}(qu^{-5},qu^2,u^9;q^2)
\label{equation:idN3-10}\\
&=-\frac{q^{-1}u^4(q;q)_{\infty}(q^2;q^2)_{\infty}(q^6;q^6)_{\infty}^3\Theta(u^2;q)\Theta(u^3;q^6)_{\infty}}
{(q^3;q^3)_{\infty}\Theta(u;q)\Theta(qu^2;q^{2})\Theta(q^3u^{6};q^{6})\Theta(u^3;q^{2})\Theta(u^9;q^{18})},\notag \\
D_{3}(qz^{-4},z,q^9z^9;q^2)
&=D_{3}(qz^{-4},qz^3,q^9z^9;q^2)
\label{equation:idN3-11}\\
&=-\frac{z(q;q)_{\infty}(q^2;q^2)_{\infty}(q^6;q^6)_{\infty}^3\Theta(z^2;q)\Theta(q^3z^3;q^6)}
{(q^3;q^3)_{\infty}\Theta(z;q)\Theta(qz^2;q^{2})\Theta(z^{3};q^{6})\Theta(qz^3;q^{2})\Theta(q^9z^9;q^{18})}.\notag
\end{align}
\end{theorem}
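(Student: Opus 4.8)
The plan is to handle all eleven families by the same route used in \cite{Mo2018} for Ramanujan's tenth-order identities: each assertion has the shape ``$D_{3}(\text{specialization};q)=$ a single quotient of theta functions,'' so one replaces $D_{3}$ by its evaluation from Corollary \ref{corollary:msplitn3zprime}, clears denominators, and recognizes what remains as a theta-function identity provable by repeated use of the Weierstrass three-term relation (\ref{equation:Weierstrass}) together with the elementary symmetries of $\Theta$.

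I would first dispose of the paired equalities such as $D_{3}(z^{-4},z,q^{3}z^{9};q)=D_{3}(z^{-4},z^{3},q^{3}z^{9};q)$. In the definition (\ref{equation:D3-def}) only the summand $m(x,z;q)$ of $D_{3}(x,z,z';q)$ involves the second slot, so these are merely instances of the standard Appell-function symmetries $m(x,z;q)=m(x,x^{-1}z^{-1};q)$ and $m(x,qz;q)=m(x,z;q)$, which I will record in Section \ref{section:prelim}. For the base-$q$ families the first symmetry applies directly (e.g.\ with $x=z^{-4}$ one has $x^{-1}z^{-1}=z^{3}$), while for the base-$q^{2}$ families the two listed second arguments differ by a factor of $q^{2}$, so one combines both symmetries.

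The work is in the evaluation itself. For a given family I substitute the chosen $x,z,z'$ — functions of $z$ or $u$ and $q$, in several cases with $q$ replaced by $q^{2}$ — into the bracketed three-term sum on the right of (\ref{equation:msplit3}), and normalize every theta function using $\Theta(q^{m}w;q^{n})=(-1)^{m}q^{-n\binom{m}{2}}w^{-m}\Theta(w;q^{n})$, the reflection $\Theta(w^{-1};q)=-w^{-1}\Theta(w;q)$, and $\Theta(w;q)=\Theta(q/w;q)$. For the base-$q^{2}$ families I additionally invoke the dissection identities relating $\Theta(\cdot;q)$ and $\Theta(\cdot;q^{3})$ to products of $\Theta(\cdot;q^{2})$ and $\Theta(\cdot;q^{6})$ (for instance $\Theta(w;q)=\big((q;q)_{\infty}/(q^{2};q^{2})_{\infty}^{2}\big)\Theta(w;q^{2})\Theta(qw;q^{2})$), together with the corresponding dissections of $(q;q)_{\infty}$, so as to reconcile the bases $q^{2},q^{6},q^{18}$ produced by the corollary with the bases $q,q^{3},q^{6},q^{18}$ in the claimed quotient. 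After factoring out the common quotient predicted by the right-hand side, each claim reduces to an identity among a bounded number of theta products over these bases — precisely the type dispatched by one or two applications of (\ref{equation:Weierstrass}), interspersed with the elementary manipulations above.

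The main obstacle is bookkeeping, not insight: managing the proliferation of theta functions across the bases $q,q^{2},q^{3},q^{6},q^{9},q^{18}$, choosing how to group the three inner summands, and picking the parameters in each Weierstrass invocation so that the residual identity closes. A few of these residual identities are genuinely new theta-function identities rather than one-line consequences of a single Weierstrass step; I will isolate them in Section \ref{section:ThetaIds} and prove them — again by iterated use of (\ref{equation:Weierstrass}) — in Section \ref{section:ThetaIds-proofs}, so that the verification of Theorem \ref{theorem:caseN3} carried out in Section \ref{section:N3-proofs} can cite them directly.
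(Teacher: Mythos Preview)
Your overall framework---specialize Corollary~\ref{corollary:msplitn3zprime} and reduce to a theta identity---is the paper's as well, and your handling of the paired equalities via the $D_n$ functional equations~(\ref{equation:Dn-funcEqn1}) is exactly right. But you are missing the central simplification that drives every proof in Section~\ref{section:N3-proofs}. The two listed second arguments are not interchangeable for computational purposes: the paper always substitutes the \emph{second} member of each pair (e.g.\ $z^{3}$ rather than $z$ in (\ref{equation:idN3-1}), $qz^{3}$ rather than $z$ in (\ref{equation:idN3-5})), because with that choice one or two of the three bracketed summands in (\ref{equation:msplit3}) vanish outright via factors like $\Theta(1;q^{9})=0$ or $\Theta(q^{3};q^{3})=0$. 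For (\ref{equation:idN3-1}) and (\ref{equation:idN3-2}) this leaves a \emph{single} summand and the evaluation is immediate after elementary normalizations; for the remaining families it leaves two summands, which the paper then collapses not by Weierstrass but by the quintuple product identity~(\ref{equation:H1Thm1.0}) (or in two cases by the Weierstrass corollaries~(\ref{equation:WR-cor1})--(\ref{equation:WR-cor2})). You never mention either the vanishing phenomenon or the quintuple product identity, and your plan to grind all three summands against the target via iterated Weierstrass would be substantially harder.

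You also have the role of Section~\ref{section:ThetaIds} inverted. Those theta identities are \emph{byproducts} of Theorem~\ref{theorem:caseN3}, obtained by equating the already-proved single-quotient value with the three-term expression one gets from substituting the \emph{other} second argument (the one where nothing vanishes) into (\ref{equation:msplit3}); see the introduction's description and the proofs in Section~\ref{section:ThetaIds-proofs}. They are not needed to establish Theorem~\ref{theorem:caseN3}, and proving them independently by Weierstrass before you have the theorem would be both unnecessary and considerably more work than the paper's route.
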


\section{Statement of results: the case $n=4$}\label{section:N4}

We point out the $n=4$ specialization of \cite[Theorem $3.5$]{HM} and then present a family of specializations that evaluates to a single quotient of theta functions.

\begin{corollary} \label{corollary:msplitn4zprime} For generic $x,z,z'\in \mathbb{C}^*$ 
{\allowdisplaybreaks \begin{align}
D_4(x,z,z';q)&=\frac{z' (q^4;q^4)_{\infty}^3}{\Theta(xz;q) \Theta(z';q^{16}) \Theta\big(-q^{6} x^4 z';q^4)} \Big [ 
\frac{\Theta\big(-q^{6} x^4 z z';q^4\big)
\Theta( z^4/z';q^{16})}
{\Theta( z;q^4\big )}\label{equation:msplit4}\\
&\qquad -xz\frac{
\Theta\big(-q^{7} x^4 z z';q^4\big)
\Theta(q^{4} z^4/z';q^{16})}
{\Theta(q z;q^4\big )}\notag \\
&\qquad +q x^2z^2\frac{
\Theta\big(-q^{8} x^4 z z';q^4\big)
\Theta(q^{8} z^4/z';q^{16})}
{\Theta(q^2 z;q^4\big )}\notag \\
&\qquad -q^{3}x^3z^3\frac{
\Theta\big(-q^{9} x^4 z z';q^4\big)
\Theta(q^{12} z^4/z';q^{16})}
{\Theta(q^3 z;q^4\big )}\Big ],\notag
\end{align}}%
where
\begin{align}
D_4(x,z,z';q)&:=m(x,z;q) -   m\big (-q^{6} x^4, z';q^{16} \big) 
 +  q^{-1} x m\big(-q^{2} x^4, z';q^{16} \big) \\
&\qquad -  q^{-3} x^2 m\big(-q^{-2} x^4, z';q^{16} \big) 
 +  q^{-6} x^3 m\big(-q^{-6} x^4, z';q^{16} \big).\notag
\end{align}
\end{corollary}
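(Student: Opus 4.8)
The plan is to derive Corollary \ref{corollary:msplitn4zprime} as the literal $n=4$ instance of Theorem \ref{theorem:msplit-general-n}, exactly as Corollaries \ref{corollary:msplitn2zprime} and \ref{corollary:msplitn3zprime} were obtained for $n=2,3$. First I would record the arithmetic simplifications that occur when $n=4$: one has $\binom{4}{2}=6$, $(-x)^4=x^4$, $n^2=16$, and $\Theta_4=\Theta_{4,12}=\prod_{i\ge 1}(1-q^{4i})=(q^4;q^4)_{\infty}$, so that $\Theta_n^3=(q^4;q^4)_{\infty}^3$. In particular the factor $\Theta\big(-q^{\binom{n}{2}}(-x)^nz';q^n\big)$ becomes $\Theta(-q^{6}x^4z';q^4)$ with no further reduction required --- in contrast to the $n=3$ case, where the theta quasi-periodicity $\Theta(qw;q)=-w^{-1}\Theta(w;q)$ had to be used to put the pulled-out factor into tidy form, here it already appears in final shape, which is why no auxiliary theta identity enters.

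Next I would expand the sum $\sum_{r=0}^{3}$ on the right-hand side of Theorem \ref{theorem:msplit-general-n} term by term. For $r=0,1,2,3$ the scalar weights are $q^{\binom{r}{2}}=1,1,q,q^{3}$ and $(-xz)^r=1,-xz,x^2z^2,-x^3z^3$, while the $r$-dependent theta factors read off directly as $\Theta\big(-q^{6+r}x^4zz';q^4\big)$ in the numerator (paired with $\Theta(q^{4r}z^4/z';q^{16})$) and $\Theta(q^rz;q^4)$ in the denominator; pulling the $r$-independent factors $\Theta(xz;q)\Theta(z';q^{16})\Theta(-q^{6}x^4z';q^4)$ out in front reproduces the four-term bracket of (\ref{equation:msplit4}).

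Finally, for the left-hand side I would expand the defining sum (\ref{equation:Dn-def}) at $n=4$: its summand is $q^{-\binom{r+1}{2}}(-x)^r\, m\big(-q^{6-4r}x^4,z';q^{16}\big)$, and after distributing the leading minus sign the $r=0,1,2,3$ contributions are $-m(-q^{6}x^4,z';q^{16})$, $+q^{-1}x\,m(-q^{2}x^4,z';q^{16})$, $-q^{-3}x^2\,m(-q^{-2}x^4,z';q^{16})$, $+q^{-6}x^3\,m(-q^{-6}x^4,z';q^{16})$, matching the stated definition of $D_4(x,z,z';q)$. No genuine obstacle arises; the entire argument is bookkeeping of exponents and signs, and the one thing to keep straight is the distinction between the moduli $q^4$, $q^{16}$, and $q$ appearing in the various theta and Appell factors.
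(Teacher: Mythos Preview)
Your proposal is correct and is exactly what the paper does: Corollary~\ref{corollary:msplitn4zprime} is stated simply as the $n=4$ specialization of Theorem~\ref{theorem:msplit-general-n}, with no further argument given, and your term-by-term bookkeeping of $q^{\binom{r}{2}}(-xz)^r$, $\Theta(-q^{6+r}x^4zz';q^4)$, $\Theta(q^{4r}z^4/z';q^{16})$, $\Theta(q^rz;q^4)$ for $r=0,1,2,3$ on the right and of $q^{-\binom{r+1}{2}}(-x)^r m(-q^{6-4r}x^4,z';q^{16})$ on the left reproduces the statement verbatim. Your observation that, unlike the $n=3$ case, no application of the theta quasi-periodicity is needed here (since $(-x)^4=x^4$ already) is accurate and explains why the formula needs no post-processing.
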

We have the following specialization that yields a single quotients of theta functions.
\begin{theorem}\label{theorem:caseN4}  We have
\begin{align}
D_{4}(-z^{-5},z,z^{16};q)
&=D_{4}(-z^{-5},-z^4,z^{16};q)
\label{equation:idN4-1}\\
&=\frac{q^{-1}z^{10}(q;q)_{\infty}(q^4;q^4)_{\infty}^3(q^{16};q^{16})_{\infty}\Theta(z^2;q)}
{(q^{8};q^{8})_{\infty}\Theta(z;q)\Theta(-qz^{4};q^2)\Theta(-q^2z^{4};q^{4})\Theta(z^{16};q^{16})}.\notag
\end{align}
\end{theorem}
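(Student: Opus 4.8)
The plan is to reduce the evaluation to the already-established $n=2$ identity (\ref{equation:idN2-2}) together with a theta-function identity verifiable by Weierstrass. The first equality is immediate: in $D_4(x,z,z';q)$ the middle argument enters only through the term $m(x,z;q)$, and the Appell function is invariant under $z\mapsto x^{-1}z^{-1}$, i.e. $m(x,z;q)=m(x,x^{-1}z^{-1};q)$ (the $z$-translation formula for $m$ carries a factor $\Theta(xzz_1;q)$ that vanishes at $z_1=x^{-1}z^{-1}$); taking $x=-z^{-5}$ gives $x^{-1}z^{-1}=-z^4$, so $D_4(-z^{-5},z,z^{16};q)=D_4(-z^{-5},-z^4,z^{16};q)$.

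To evaluate the common value, I would first record a composition relation for $D_4$. Rewriting the definition (\ref{equation:D2-def}) of $D_2$ at base $q^4$ as $m(a,b;q^4)=D_2(a,b,c;q^4)+m(-q^4a^2,c;q^{16})-q^{-4}a\,m(-q^{-4}a^2,c;q^{16})$, and applying it to the two mock pieces $m(-qx^2,w;q^4)$ and $m(-q^{-1}x^2,w;q^4)$ of $D_2(x,z,w;q)$, a direct comparison with the definition of $D_4$ in Corollary \ref{corollary:msplitn4zprime} shows that the three Appell functions with argument $w$ cancel and that, for every $w\in\C^*$,
\begin{equation*}
D_4(x,z,z';q)=D_2(x,z,w;q)+D_2(-qx^2,w,z';q^4)-q^{-1}x\,D_2(-q^{-1}x^2,w,z';q^4).
\end{equation*}
Then I would specialize $x=-z^{-5}$, $z'=z^{16}$ and, crucially, $w=z^4$. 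In each $D_2$ on the right the third argument is then the fourth power of the middle argument ($z^4=(z)^4$ in the first; $z^{16}=(z^4)^4$ in the other two), so each is a single quotient of theta functions by (\ref{equation:idN2-2}), with $q$ replaced by $q^4$ in the latter two. This expresses $D_4(-z^{-5},z,z^{16};q)$ as an explicit sum of three single quotients of theta functions.

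What remains is to show that this three-term sum collapses to the single quotient on the right-hand side of (\ref{equation:idN4-1}). After normalizing every theta argument to standard form by the quasi-periodicity relations recalled in Section \ref{section:prelim} (such as $\Theta(qx;q)=-x^{-1}\Theta(x;q)$ and $\Theta(x;q)\Theta(-x;q)=\Theta(x^2;q^2)(q;q)_\infty^2/(q^2;q^2)_\infty$), I would clear the common denominator; what is left is a polynomial identity among products of a bounded number of theta functions, to be verified by repeated use of the three-term Weierstrass relation (\ref{equation:Weierstrass}). This last step is the main obstacle: it is a finite computation, but one must group the terms and choose the order of the Weierstrass applications so that the sum telescopes to a single product, and this residual identity is exactly the byproduct identity recorded in Section \ref{section:ThetaIds}. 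An equivalent route, with comparable bookkeeping, would be to substitute $x=-z^{-5}$, $z'=z^{16}$ directly into the $n=4$ splitting (\ref{equation:msplit4}), use the factorization $\prod_{j=0}^{3}\Theta(q^jz;q^4)=\Theta(z;q)(q^4;q^4)_\infty^4/(q;q)_\infty$ to put the four bracketed terms over a common denominator, and collapse the numerator by Weierstrass; I expect the composition route to be marginally cleaner, since each of its three pieces is already a single quotient.
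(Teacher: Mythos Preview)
Your composition relation
\[
D_4(x,z,z';q)=D_2(x,z,w;q)+D_2(-qx^2,w,z';q^4)-q^{-1}x\,D_2(-q^{-1}x^2,w,z';q^4)
\]
is correct, and the choice $w=z^4$ does put all three pieces into the form of (\ref{equation:idN2-2}); so the strategy is sound, and the first equality is handled exactly as in the paper via (\ref{equation:Dn-funcEqn1}). The route, however, is genuinely different from what the paper does and is considerably more laborious. The paper plugs the \emph{second} middle argument $-z^4$ directly into (\ref{equation:msplit4}). With that choice the bracket has four terms, but two of them vanish outright: in the $r=0$ term the factor $\Theta(1;q^{16})$ is zero, and in the $r=2$ term the factor $\Theta(q^{8};q^4)$ is zero. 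Only the $r=1$ and $r=3$ terms survive, and after elementary uses of (\ref{equation:j-elliptic}) and (\ref{equation:j-flip}) their combination is exactly $\Theta(-qz^4;q^4)-z^2\Theta(-q^3z^4;q^4)$, which collapses to $\Theta(z^2;q)$ by the $m=2$ case of (\ref{equation:j-split}). No Weierstrass relation is needed at all; the rest is product rearrangement via (\ref{equation:j-mod}).

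So your expectation that the direct-substitution route has ``comparable bookkeeping'' and that ``the composition route [is] marginally cleaner'' is off: the direct route with the argument $-z^4$ is essentially a two-line computation, whereas your route leaves a genuine three-term theta identity to be verified, which you flag as ``the main obstacle'' but do not carry out. Also, that residual three-term identity is \emph{not} the byproduct identity (\ref{equation:id4-1}) in Section~\ref{section:ThetaIds}; that one has four terms and arises from inserting the \emph{first} middle argument $z$ into (\ref{equation:msplit4}), and it is proved in the paper precisely by comparing with the already-established closed form---not by an independent Weierstrass reduction. Your composition approach is a legitimate alternative, but to complete it you would still have to prove the leftover identity, and the clean way to do that is to compare against the closed form obtained by the paper's much shorter direct computation.
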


\section{Preliminaries}\label{section:prelim}
We will frequently use the following identities without mention.  They easily follow from the definitions.
{\allowdisplaybreaks \begin{subequations}
\begin{gather}
\overline{\Theta}_{0,1}=2\overline{\Theta}_{1,4}=\frac{2\Theta_2^2}{\Theta_1},  \ 
\overline{\Theta}_{1,2}=\frac{\Theta_2^5}{\Theta_1^2\Theta_4^2}, \ 
  \Theta_{1,2}=\frac{\Theta_1^2}{\Theta_2},   \overline{\Theta}_{1,3}=\frac{\Theta_2\Theta_3^2}{\Theta_1\Theta_6},
   \notag\\
\Theta_{1,4}=\frac{\Theta_1\Theta_4}{\Theta_2},  
\  \Theta_{1,6}=\frac{\Theta_1\Theta_6^2}{\Theta_2\Theta_3},   \ 
\overline{\Theta}_{1,6}=\frac{\Theta_2^2\Theta_3\Theta_{12}}{\Theta_1\Theta_4\Theta_6}.\notag
\end{gather}
\end{subequations}}%
Also following from the definitions are the following general identities:
{\allowdisplaybreaks \begin{subequations}
\begin{gather}
\Theta(q^n x;q)=(-1)^nq^{-\binom{n}{2}}x^{-n}j(x;q), \ \ n\in\mathbb{Z},\label{equation:j-elliptic}\\
\Theta(x;q)=\Theta(q/x;q)\label{equation:j-flip},\\
\Theta(x;q)={\Theta_1}\Theta(x,qx,\dots,q^{n-1}x;q^n)/{\Theta_n^n} \ \ {\text{if $n\ge 1$,}}\label{equation:j-mod}\\
\Theta(z;q)=\sum_{k=0}^{m-1}(-1)^k q^{\binom{k}{2}}z^k
\Theta\big ((-1)^{m+1}q^{\binom{m}{2}+mk}z^m;q^{m^2}\big ),\label{equation:j-split}\\
\Theta(x^n;q^n)={\Theta_n}\Theta(x,\zeta_nx,\dots,\zeta_n^{n-1}x;q^n)/{\Theta_1^n} \ \ {\text{if $n\ge 1$.}}\label{equation:j-roots}
\end{gather}
\end{subequations}}

We collect several well-known results about theta functions in terms of a proposition.
\begin{proposition}   For generic $x,y\in \mathbb{C}^*$ 
 \begin{subequations}
{\allowdisplaybreaks \begin{gather}
\Theta(qx^3;q^3)+x\Theta(q^2x^3;q^3)=\Theta(-x;q)\Theta(qx^2;q^2)/\Theta_2={\Theta_1\Theta(x^2;q)}/{\Theta(x;q)},\label{equation:H1Thm1.0}\\
\Theta(x;q)\Theta(y;q)=\Theta(-xy;q^2)\Theta(-qx^{-1}y;q^2)-x\Theta(-qxy;q^2)\Theta(-x^{-1}y;q^2),\label{equation:H1Thm1.1}\\
\Theta(-x;q)\Theta(y;q)-\Theta(x;q)\Theta(-y;q)=2x\Theta(x^{-1}y;q^2)j(qxy;q^2),\label{equation:H1Thm1.2A}\\
\Theta(-x;q)\Theta(y;q)+\Theta(x;q)\Theta(-y;q)=2\Theta(xy;q^2)\Theta(qx^{-1}y;q^2).\label{equation:H1Thm1.2B}
\end{gather}}%
\end{subequations}
\end{proposition}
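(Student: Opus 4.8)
The plan is to derive each of the four identities \eqref{equation:H1Thm1.0}--\eqref{equation:H1Thm1.2B} directly from the defining series and product expansions of $\Theta(x;q)$, together with the elementary modularity relations \eqref{equation:j-elliptic}--\eqref{equation:j-roots} already recorded above. The overall strategy is the standard one for theta-function identities: both sides of each identity are, after clearing the obvious quasi-periodicity factors, entire functions of a single variable (say $x$) with the same zeros and the same ``size'' (same quasi-period multipliers), hence equal up to a constant which is then pinned down by evaluating at one convenient point.

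For \eqref{equation:H1Thm1.0}, I would start from the $3$-dissection \eqref{equation:j-mod} with $n=3$, applied to $\Theta(-x;q)$: this writes $\Theta(-x;q)$ as $\Theta_1/\Theta_3^3$ times $\Theta(-x;q^3)\Theta(-qx;q^3)\Theta(-q^2x;q^3)$, and similarly a suitable dissection of $\Theta(qx^2;q^2)/\Theta_2$. Alternatively, and more cleanly, I would recognize the left-hand side $\Theta(qx^3;q^3)+x\,\Theta(q^2x^3;q^3)$ as the ``even part in $x \mapsto \zeta_3 x$'' collapse: apply \eqref{equation:j-roots} or \eqref{equation:j-split} with $m=3$ to $\Theta_1\Theta(x^2;q)/\Theta(x;q)$ and match coefficients. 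Concretely, $\Theta_1\Theta(x^2;q)/\Theta(x;q)$ is an entire function of $x$ (the zeros of $\Theta(x;q)$ at $x\in q^{\Z}$ are cancelled by zeros of $\Theta(x^2;q)$), it is supported on exponents $\equiv 1,2 \pmod 3$ in a Laurent-type expansion, and its transformation under $x\mapsto qx$ forces it to be a combination $a\,\Theta(qx^3;q^3)+b\,x\,\Theta(q^2x^3;q^3)$; evaluating at $x=-1$ and at $x = -q^{1/3}\zeta$ (or simply comparing the constant terms of the $q^3$-theta series) gives $a=b=1$. The middle expression $\Theta(-x;q)\Theta(qx^2;q^2)/\Theta_2$ is then identified with the same entire function via the classical product rearrangement $(-x)_\infty(-q/x)_\infty \cdot (-qx^2;q^2)_\infty = (x^2;q^2)_\infty(-q/x)_\infty\cdots$, i.e. splitting $(q)_\infty(x^2;q)_\infty(q/x^2;q)_\infty$ into its $q^2$-parts.

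For \eqref{equation:H1Thm1.1}--\eqref{equation:H1Thm1.2B}, the uniform approach is the $2$-dissection. Expand each $\Theta(\cdot;q)$ on the left by \eqref{equation:j-split} with $m=2$, namely $\Theta(z;q)=\Theta(-qz^2;q^4)-z\,\Theta(-q^3z^2;q^4)$ after relabelling, then multiply out the two such expansions, and regroup the four resulting products according to parity; the cross terms assemble, via the addition formula for $\Theta$ in the modulus $q^2$ (again \eqref{equation:j-split} run backwards), into the single products on the right. Equivalently, and perhaps most economically, I would prove \eqref{equation:H1Thm1.1} first by the zeros/quasi-periods argument in the variable $x$ (fixing $y$): as a function of $x$, both sides have simple zeros exactly at $x \in -y^{-1}q^{2\Z}$ and $x\in -y q^{1+2\Z}$ (reading off from the product forms on the right, and from $\Theta(x;q)\Theta(y;q)$ on the left after noting its zero set), both transform the same way under $x \mapsto q^2 x$, so the ratio is constant; set $x = -y$ (or $x=1$) to get the constant $1$. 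Then \eqref{equation:H1Thm1.2A} and \eqref{equation:H1Thm1.2B} follow by specializing \eqref{equation:H1Thm1.1}: replace $(x,y)$ appropriately and add/subtract the version with $x \mapsto -x$, using $\Theta(-qx y;q^2)$ symmetry and \eqref{equation:j-flip}; the sum and difference of two instances of \eqref{equation:H1Thm1.1} telescope precisely into the right-hand sides of \eqref{equation:H1Thm1.2B} and \eqref{equation:H1Thm1.2A}.

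The main obstacle I anticipate is purely bookkeeping rather than conceptual: keeping the powers of $q$ and the sign conventions straight when applying \eqref{equation:j-split} and reassembling the dissected pieces, since a single misplaced $q^{\binom{k}{2}}$ or an overlooked application of \eqref{equation:j-elliptic} will throw off the monomial prefactor. To control this I would fix once and for all the normalization by comparing the coefficient of a single monomial (e.g. the $x^0$ term, computed from the $q^{\binom{n}{2}}$-series definition of $\Theta$) on both sides at the end of each derivation, which simultaneously verifies the identity and fixes any stray constant. Everything else --- the location of zeros, the quasi-periodicity multipliers --- is immediate from the product expansion $\Theta(x;q)=(x)_\infty(q/x)_\infty(q)_\infty$ and \eqref{equation:j-elliptic}, so no deeper input is needed.
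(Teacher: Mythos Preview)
The paper does not actually prove this proposition: it is stated as a collection of classical theta-function identities, with the only comment being that \eqref{equation:H1Thm1.0} ``is the quintuple product identity.'' The labels (H1Thm$\ldots$) indicate that the identities are quoted from Hickerson's paper \cite{H1}, where they appear as Theorem~1.0--1.2. So there is no ``paper's own proof'' to compare against; the paper simply assumes these as background.

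Your sketch is a reasonable outline of the standard derivations and is more than the paper itself offers. A couple of small cautions on the details: in your zero-counting argument for \eqref{equation:H1Thm1.1}, the left-hand side $\Theta(x;q)\Theta(y;q)$, viewed as a function of $x$, has its zeros at $x\in q^{\Z}$, not at the locations you wrote (you seem to have read the zeros off the right-hand side's individual factors, but the right-hand side is a difference, so its zeros are not visible factor-by-factor). The cleaner execution is the one you mention second: both sides satisfy the same functional equation under $x\mapsto qx$ (or $x\mapsto q^2x$), hence their difference has at most a bounded number of zeros in a fundamental annulus by Proposition~\ref{proposition:H1Thm1.7}, and then one checks enough zeros of the difference (e.g.\ at $x\in q^{\Z}$, where the left side vanishes and the right side is seen to vanish by \eqref{equation:j-elliptic}). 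Your plan to obtain \eqref{equation:H1Thm1.2A}--\eqref{equation:H1Thm1.2B} by writing down \eqref{equation:H1Thm1.1} together with its $x\mapsto -x$ companion and adding/subtracting is correct and is exactly how these are usually deduced.
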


\noindent Identity (\ref{equation:H1Thm1.0}) is the quintuple product identity.

We recall the three-term Weierstrass relation for theta functions \cite[(1.)]{We}, \cite{Ko}.  First we define some more shorthand
\begin{equation*}
\Theta(x_1,x_2,\dots,x_n;q):=\Theta(x_1;q)\Theta(x_2;q)\cdots\Theta(x_n;q).
\end{equation*}
The three-term Weierstrass relation for theta functions then reads
\begin{proposition}\label{proposition:Weierstrass-id} For generic $a,b,c,d\in \mathbb{C}^*$
\begin{equation}
\Theta(ac,a/c,bd,b/d;q)=\Theta(ad,a/d,bc,b/c;q)+b/c \cdot \Theta(ab,a/b,cd,c/d;q).\label{equation:Weierstrass}
\end{equation}
\end{proposition}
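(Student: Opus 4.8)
\textbf{Proof plan for Proposition~\ref{proposition:Weierstrass-id}.}
The statement to prove is the three-term Weierstrass relation \eqref{equation:Weierstrass}. The plan is to reduce it to an elementary identity for Laurent series in the four variables $a,b,c,d$ and verify equality coefficient-by-coefficient using the product representation $\Theta(x;q)=\sum_{n}(-1)^nq^{\binom{n}{2}}x^n$. First I would observe that each of the three products $\Theta(ac,a/c,bd,b/d;q)$, $\Theta(ad,a/d,bc,b/c;q)$, $\Theta(ab,a/b,cd,c/d;q)$ is, up to an overall theta-factor, a function that is even-ish and quasi-periodic in each variable. The cleanest route is to divide through: fix $a,b$ generic and regard both sides as functions of the single variable $c$ (with $d$ a parameter). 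Using the ellipticity relation \eqref{equation:j-elliptic}, namely $\Theta(q^n x;q)=(-1)^nq^{-\binom{n}{2}}x^{-n}\Theta(x;q)$, one checks that the quotient
\[
F(c):=\frac{\Theta(ad,a/d,bc,b/c;q)+(b/c)\,\Theta(ab,a/b,cd,c/d;q)}{\Theta(ac,a/c,bd,b/d;q)}
\]
is invariant under $c\mapsto qc$ and under $c\mapsto 1/c$, hence descends to a meromorphic function on the torus with at most one simple pole per fundamental domain (coming from the zero of $\Theta(ac;q)$ at $c=1/a$, the zero of $\Theta(a/c;q)$ being its image under $c\mapsto 1/c$). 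A genus-one meromorphic function with a single simple pole is constant, so $F(c)$ is independent of $c$; evaluating at a convenient value of $c$ (for instance $c=d$, which makes the $\Theta(cd,c/d;q)$ term and the $\Theta(bc,b/c;q)$ term collapse nicely, with $\Theta(1;q)=0$ killing one product) pins down the constant to be $1$.

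Carrying this out, the key steps in order are: (i) verify the quasi-periodicity $c\mapsto qc$ of all three terms using \eqref{equation:j-elliptic}, tracking that the factor $b/c$ in the last term exactly compensates the mismatch in the power-of-$c$ prefactors; (ii) verify the symmetry $c\mapsto 1/c$ using \eqref{equation:j-flip}, $\Theta(x;q)=\Theta(q/x;q)$, again checking that the $b/c$ prefactor behaves correctly; (iii) locate the zeros of the denominator $\Theta(ac,a/c,bd,b/d;q)$ as a function of $c$ and confirm that the numerator vanishes at those of them not cancelled, so that $F$ has no poles and is therefore a constant $\kappa=\kappa(a,b,d,q)$; (iv) show $\kappa$ is in fact independent of $a,b,d$ as well, by the same torus argument applied in each remaining variable (or simply note the final value computed below is manifestly symmetric), and (v) evaluate $F$ at $c=d$: the numerator becomes $\Theta(ad,a/d,bd,b/d;q)+b/d\cdot\Theta(ab,a/b,d^2,1;q)=\Theta(ad,a/d,bd,b/d;q)$ since $\Theta(1;q)=0$, while the denominator is $\Theta(ad,a/d,bd,b/d;q)$, giving $\kappa=1$.

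The main obstacle I anticipate is step (i)–(ii): bookkeeping the monomial prefactors so that the ostensibly three different quasi-periodicity multipliers genuinely agree. Under $c\mapsto qc$ one gets, from \eqref{equation:j-elliptic}, $\Theta(ac;q)\mapsto -(ac)^{-1}\Theta(ac;q)$ and $\Theta(a/c;q)\mapsto \Theta(a/(qc);q)=\Theta(q\cdot a/(q^2c);q)$, etc.; one must combine these so that the multiplier of $\Theta(ac,a/c,bd,b/d;q)$, of $\Theta(ad,a/d,bc,b/c;q)$, and of $(b/c)\Theta(ab,a/b,cd,c/d;q)$ all coincide. The appearance of the asymmetric coefficient $b/c$ is precisely the correction that makes this work, and verifying it is the only genuinely delicate computation; everything else is an application of Liouville's theorem on the torus. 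As an alternative to the torus argument, one can instead expand both sides as Laurent series in $c$ and $d$ and match the coefficient of $c^m d^n$: both sides reduce to a sum over lattice points, and the identity becomes an instance of the classical addition formula $\sum_{j+k=n}(-1)^{\dots}q^{\dots}=\dots$; I would keep the torus argument as the primary proof since it is shorter and requires only \eqref{equation:j-elliptic} and \eqref{equation:j-flip}, which are already available.
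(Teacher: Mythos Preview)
The paper does not supply a proof of Proposition~\ref{proposition:Weierstrass-id}; it is quoted as a classical result with references to Weierstrass \cite{We} and Koornwinder \cite{Ko}. Your torus argument is a standard and correct way to establish it. One small correction: the denominator $\Theta(ac;q)\Theta(a/c;q)$ has \emph{two} simple zeros in a fundamental annulus, at $c\equiv a$ and $c\equiv 1/a$ (generically distinct), not one; the involution $c\mapsto 1/c$ merely interchanges them rather than identifying them. Your step~(iii) nonetheless handles this correctly, since checking that the numerator vanishes at $c=a$ (which it does, via $\Theta(b/a;q)+(b/a)\Theta(a/b;q)=0$ from \eqref{equation:j-flip} and \eqref{equation:j-elliptic}) together with the $c\mapsto 1/c$ symmetry forces vanishing at $c=1/a$ as well, so $F$ is pole-free and hence constant. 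The evaluation at $c=d$ then fixes the constant as~$1$, exactly as you say.
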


\begin{corollary}  We have
\begin{align}
\Theta(q^2,z^2,qz,q/z;q^6)&=\Theta(qz^2,q,q^2/z,z;q^6)+z \cdot \Theta(q^2z,z,q,q/z^2;q^6),\label{equation:WR-cor1}\\
\Theta(q^5/z,q^3z,qz^2,q;q^6)&=\Theta(q^4z,q^4/z,q^2,z^2;q^6)+z^2 \cdot \Theta(q^5z,q^3/z,q,q/z^2;q^6).
\label{equation:WR-cor2}
\end{align}
\end{corollary}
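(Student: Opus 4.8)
The plan is to obtain both identities as direct specializations of the three-term Weierstrass relation (\ref{equation:Weierstrass}) of Proposition \ref{proposition:Weierstrass-id}, taken with base $q^6$ in place of $q$. Concretely, I would search for values of $a,b,c,d$ (depending on $q$ and $z$) so that $\{ac,a/c,bd,b/d\}$ reproduces the four theta factors on the left-hand side, $\{ad,a/d,bc,b/c\}$ reproduces the first product on the right, $\{ab,a/b,cd,c/d\}$ reproduces the second product, and $b/c$ equals the displayed scalar coefficient. These parameters are pinned down quickly: in each pair such as $\{ac,a/c\}$ the product of the two arguments is $a^2$, so one reads off candidates for $a^2,b^2,c^2$ from the three groups of theta arguments, and the coefficient constraint $b/c=z$ (respectively $b/c=z^2$) then removes the remaining freedom.

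For (\ref{equation:WR-cor1}) the plan is to substitute $a=qz$, $b=q$, $c=q/z$, $d=z$ into (\ref{equation:Weierstrass}) with base $q^6$. One then checks that $\{ac,a/c,bd,b/d\}=\{q^2,z^2,qz,q/z\}$, that $\{ad,a/d,bc,b/c\}=\{qz^2,q,q^2/z,z\}$, that $b/c=z$, and that $\{ab,a/b,cd,c/d\}=\{q^2z,z,q,q/z^2\}$, so (\ref{equation:Weierstrass}) becomes exactly (\ref{equation:WR-cor1}).

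For (\ref{equation:WR-cor2}) the plan is to substitute $a=q^4$, $b=qz$, $c=q/z$, $d=z$ into (\ref{equation:Weierstrass}) with base $q^6$. One then checks that $\{ac,a/c,bd,b/d\}=\{q^5/z,q^3z,qz^2,q\}$, that $\{ad,a/d,bc,b/c\}=\{q^4z,q^4/z,q^2,z^2\}$, that $b/c=z^2$, and that $\{ab,a/b,cd,c/d\}=\{q^5z,q^3/z,q,q/z^2\}$, so (\ref{equation:Weierstrass}) yields (\ref{equation:WR-cor2}).

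Since the substitutions are explicit, there is no genuine obstacle here; the only point to watch is that when searching for the parameters one is allowed to match arguments up to the reflection $\Theta(x;q^6)=\Theta(q^6/x;q^6)$ of (\ref{equation:j-flip}) (for the two substitutions above this reflection turns out not even to be needed, as every factor matches on the nose). Once the values of $a,b,c,d$ are in hand, confirming all twelve theta factors in each identity is a one-line check, so the main effort is simply identifying the correct specialization of Weierstrass.
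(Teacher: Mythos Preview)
Your proposal is correct and matches the paper's own proof exactly: the paper also proves both identities by specializing the Weierstrass relation (\ref{equation:Weierstrass}) with $q\to q^6$, taking $(a,b,c,d)=(qz,q,q/z,z)$ for (\ref{equation:WR-cor1}) and $(a,b,c,d)=(q^4,qz,q/z,z)$ for (\ref{equation:WR-cor2}). Your verification of the twelve theta factors and the scalar in each case is accurate, and no use of (\ref{equation:j-flip}) is needed.
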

\begin{proof}  For (\ref{equation:WR-cor1}), we set $q\to q^6$ and $(a,b,c,d)\to (qz,q,q/z,z)$.  For (\ref{equation:WR-cor2}), we set $q\to q^6$ and $(a,b,c,d)\to (q^4,qz,q/z,z)$. \qedhere

\end{proof}

The next proposition follows immediately from \cite[Lemma $2$]{ASD} see also \cite[Theorem $1.7$]{H1}.
\begin{proposition}\label{proposition:H1Thm1.7} Let $C$ be a nonzero complex number, and let $n$ be a nonnegative integer.  Suppose that $F(z)$ is analytic for $z\ne 0$ and satisfies $F(qz)=Cz^{-n}F(z)$.  Then either $F(z)$ has exactly $n$ zeros in the annulus $|q|<|z|\le 1$ or $F(z)=0$ for all $z$.
\end{proposition}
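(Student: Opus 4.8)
The plan is to prove the proposition by the argument principle, using the transformation law $F(qz)=Cz^{-n}F(z)$ to treat the half-open annulus $A:=\{z\in\C^*:|q|<|z|\le 1\}$ as a fundamental domain for the scaling $z\mapsto qz$. Since the statement is a dichotomy, I may assume $F\not\equiv 0$; as $F$ is holomorphic and not identically zero on the connected domain $\C^*$, its zeros are isolated, every closed annulus $\{a\le|z|\le b\}$ with $0<a\le b$ contains only finitely many of them, and $F'/F$ is meromorphic on $\C^*$ with a simple pole of residue equal to the multiplicity at each zero of $F$ and no other poles.

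First I would differentiate the transformation law (equivalently, take its formal logarithmic derivative) to get
\begin{equation*}
q\,\frac{F'(qz)}{F(qz)}=\frac{F'(z)}{F(z)}-\frac{n}{z}.
\end{equation*}
Next, pick a radius $s>0$ with no zero of $F$ on $|z|=s$ (such $s$ exist since the zeros are isolated); by the transformation law there is then also no zero on $|z|=|q|s$. The argument principle gives, for the number $N$ of zeros of $F$ counted with multiplicity in the open annulus $\{|q|s<|z|<s\}$,
\begin{equation*}
N=\frac{1}{2\pi i}\oint_{|z|=s}\frac{F'(z)}{F(z)}\,dz-\frac{1}{2\pi i}\oint_{|z|=|q|s}\frac{F'(z)}{F(z)}\,dz.
\end{equation*}
In the second integral I would substitute $z=qw$; multiplication by $q$ preserves orientation, so $w$ runs counterclockwise over $|w|=s$, and the displayed identity turns that integral into $\oint_{|w|=s}\big(F'(w)/F(w)-n/w\big)\,dw$. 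The two $F'/F$ contributions to $N$ cancel, leaving $N=\frac{1}{2\pi i}\oint_{|w|=s}\frac{n}{w}\,dw=n$.

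Finally I would reconcile the open annulus $\{|q|s<|z|<s\}$ with the prescribed half-open annulus $A$. For every $s>0$, the set $\{|q|s<|z|\le s\}$ meets each orbit $\{q^kz_0:k\in\Z\}$ in exactly one point: $|q|s<|q|^k|z_0|\le s$ says that $k\log|q|+\log|z_0|$ lies in the half-open interval $(\log|q|s,\log s]$, whose length $|\log|q||$ equals the common spacing of the bi-infinite progression $\{k\log|q|+\log|z_0|:k\in\Z\}$, so exactly one term falls inside. Hence the multiplicity-count of zeros of $F$ in $\{|q|s<|z|\le s\}$ is independent of $s$; it equals the count in $A$ (take $s=1$) and also, for the generic $s$ chosen above (no zeros on either bounding circle), the count $N=n$ in the open annulus. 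Therefore $F$ has exactly $n$ zeros in $A$. The one place I would be careful is exactly this last bookkeeping — checking that $A$ is a genuine fundamental domain so its zero count is well defined and $s$-independent; the transformation law conveniently makes the simultaneous exclusion of zeros from $|z|=s$ and $|z|=|q|s$ automatic, so nothing else is delicate. Alternatively one may simply invoke \cite[Lemma $2$]{ASD}, or \cite[Theorem $1.7$]{H1}, of which this proposition is the quoted special case.
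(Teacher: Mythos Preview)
Your argument-principle proof is correct and is the standard one behind the cited results; the paper itself does not supply a proof of this proposition but simply quotes it from \cite[Lemma~2]{ASD} and \cite[Theorem~1.7]{H1}, exactly as you note in your final sentence. So your write-up is strictly more informative than the paper here, and nothing in it is problematic.
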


\begin{corollary}  We have
\begin{equation}
 \Theta(-q^2;q^6)\Theta(q^6z;q^{12})\Theta(z;q^6)+q\Theta(-q;q^6)\Theta(z;q^{12})\Theta(q^3z;q^6)
 -\frac{\Theta(z;q^4)(q^3;q^3)_{\infty}^3}{(q;q)_{\infty}}=0.\label{equation:ASD-cor1}
\end{equation}
\end{corollary}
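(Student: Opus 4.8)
The plan is to prove (\ref{equation:ASD-cor1}) by the standard theta-function uniqueness argument, applying Proposition \ref{proposition:H1Thm1.7} to the left-hand side after matching it with a functional equation, and then checking that it has more zeros than the functional equation allows.

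\emph{Step 1 (functional equation).} I would write $F(z)$ for the left-hand side of (\ref{equation:ASD-cor1}), with summands $T_1(z)=\Theta(-q^2;q^6)\Theta(q^6z;q^{12})\Theta(z;q^6)$, $T_2(z)=q\,\Theta(-q;q^6)\Theta(z;q^{12})\Theta(q^3z;q^6)$, and $T_3(z)=\Theta(z;q^4)(q^3;q^3)_\infty^3/(q;q)_\infty$. Each $T_i$ is analytic for $z\in\mathbb C^*$, hence so is $F$. Using the elliptic transformation (\ref{equation:j-elliptic}) with $q$ replaced by $q^4$, $q^6$, and $q^{12}$, I would check that each summand satisfies $T_i(q^{12}z)=-q^{-12}z^{-3}T_i(z)$: for $T_3$ this is $\Theta(q^{12}z;q^4)=-q^{-12}z^{-3}\Theta(z;q^4)$ since $q^{12}=(q^4)^3$; for $T_1$ one combines $\Theta(q^{12}\cdot q^6z;q^{12})=-q^{-6}z^{-1}\Theta(q^6z;q^{12})$ with $\Theta(q^{12}z;q^6)=q^{-6}z^{-2}\Theta(z;q^6)$; and similarly for $T_2$. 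Hence $F(q^{12}z)=-q^{-12}z^{-3}F(z)$, so Proposition \ref{proposition:H1Thm1.7} (with $q\mapsto q^{12}$, $C=-q^{-12}$, $n=3$) gives: either $F\equiv0$ or $F$ has exactly three zeros in the annulus $|q|^{12}<|z|\le1$.

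\emph{Step 2 (four zeros).} I would then exhibit four \emph{distinct} zeros of $F$ in that annulus, namely $z=1,q^3,q^6,q^9$, which forces $F\equiv0$. At $z=1$ all three summands vanish because $\Theta(1;q^6)=\Theta(1;q^{12})=\Theta(1;q^4)=0$. At $z=q^3$ the summand $T_2$ vanishes outright since $\Theta(q^6;q^6)=0$, and after using the flip (\ref{equation:j-flip}) on $\Theta(q^9;q^{12})$ one is left to check $T_1(q^3)=T_3(q^3)$, that is, $\Theta(-q^2;q^6)\Theta(q^3;q^{12})\Theta(q^3;q^6)=\Theta(q^3;q^4)(q^3;q^3)_\infty^3/(q;q)_\infty$. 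At $z=q^9$ one gets the \emph{same} identity: applying (\ref{equation:j-elliptic}) to the shifted arguments produces a common factor $q^{-6}$ on both $T_1(q^9)$ and $T_3(q^9)$. At $z=q^6$ the summand $T_1$ vanishes (indeed with a zero of order two), and after reducing the arguments of $\Theta(q^9;q^6)$ and $\Theta(q^6;q^4)$ via (\ref{equation:j-elliptic}) one is left to check $T_2(q^6)=T_3(q^6)$, that is, $\Theta(-q;q^6)\Theta(q^6;q^{12})\Theta(q^3;q^6)=\Theta(q^2;q^4)(q^3;q^3)_\infty^3/(q;q)_\infty$.

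\emph{Step 3 (the two auxiliary identities).} These two displayed identities are pure infinite-product identities, and I would verify each by writing every theta factor as a quotient of $q$-Pochhammer symbols, merging or splitting products according to residue classes — for instance $\Theta(q^3;q^4)=(q;q)_\infty(q^4;q^4)_\infty/(q^2;q^2)_\infty$, $\Theta(-q^2;q^6)=(q^4;q^4)_\infty(q^6;q^6)_\infty^2/\big((q^2;q^2)_\infty(q^{12};q^{12})_\infty\big)$, $\Theta(q^6;q^{12})=(q^6;q^6)_\infty^2/(q^{12};q^{12})_\infty$, and so on. After cancellation both sides of the first identity collapse to $(q^3;q^3)_\infty^3(q^4;q^4)_\infty/(q^2;q^2)_\infty$, and both sides of the second to $(q^2;q^2)_\infty^2(q^3;q^3)_\infty^3/\big((q;q)_\infty(q^4;q^4)_\infty\big)$. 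Thus $F$ vanishes at the four distinct points $1,q^3,q^6,q^9$ of the annulus $|q|^{12}<|z|\le1$; since four zeros exceed the three permitted by Proposition \ref{proposition:H1Thm1.7}, we get $F\equiv0$, which is (\ref{equation:ASD-cor1}).

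\emph{On the main obstacle.} There is no real structural difficulty here; the one place needing care is the bookkeeping in Step 1 (getting the exponents in the functional equation exactly right) together with the verification that $1,q^3,q^6,q^9$ are genuinely distinct and lie in the half-open annulus, so that the zero count is legitimate. The two product identities in Step 3 are routine rearrangements of infinite products. A slightly different route is available — showing $F(z)/\Theta(z;q^4)$ is holomorphic on $\mathbb C^*$ and invariant under $z\mapsto q^{12}z$, hence a constant by Proposition \ref{proposition:H1Thm1.7}, and then identifying the constant as $0$ — but it requires the same two auxiliary identities (now as the statement that $F$ vanishes at the three zeros $1,q^4,q^8$ of $\Theta(z;q^4)$), so the four-zeros argument above seems the shortest.
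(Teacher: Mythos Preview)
Your proposal is correct and follows essentially the same route as the paper: establish the functional equation $F(q^{12}z)=-q^{-12}z^{-3}F(z)$, invoke Proposition~\ref{proposition:H1Thm1.7}, and exhibit four zeros $1,q^3,q^6,q^9$ in the fundamental annulus (the paper writes ``$0,q^3,q^6,q^9$'', evidently listing the exponents). You have simply supplied the details the paper omits, namely the explicit verification of the two auxiliary product identities at $z=q^3,q^9$ and at $z=q^6$; these computations are correct.
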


\begin{proof}
We let the left-hand side be $f(z)$.  We see that $f(q^{12}z)=-q^{-12}z^{-3}f(z)$.   By Proposition \ref{proposition:H1Thm1.7}, $f(z)$ has either exactly $3$ zeros in the annulus $|q|^{12}<|z|\le 1$ or $f(z)=0$ for all $z$.  But there are at least four such values of $z$ for which at least one of the terms in $f(z)$ vanishes: $0, q^3,q^{6},q^{9}$.  Verifying that $f(z)$ vanishes for these four values of $z$ is just a matter of proving that the remaining two theta products sum to zero.
\end{proof}

The function $D_n$ satisfies several easily shown functional equations
\begin{proposition} We have
\begin{gather}
D_n(x,z,z';q)=D_n(x,qz,z';q)=D_n(x,x^{-1}z^{-1},z';q)=D_n(x,z,q^{n^2}z';q),\label{equation:Dn-funcEqn1}\\
\Theta(x;q)D_n(x,z,z';q)=\Theta(qx;q)D_n(qx,z,z';q).\label{equation:Dn-funcEqn2}
\end{gather}
\end{proposition}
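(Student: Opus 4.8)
The plan is to reduce all four equations to the definition \eqref{equation:Dn-def} together with the standard functional equations of the Appell function $m(x,z;q)$, each of which is an elementary manipulation of the defining series \eqref{equation:mdef-eq} (and all are recorded in \cite[Proposition~3.1]{HM}): periodicity in the second argument, $m(x,z;q)=m(x,qz;q)$; the reflection $m(x,z;q)=m(x,x^{-1}z^{-1};q)$ (equivalently, the change-of-$z$ formula for $m$ with its theta factor $\Theta(1;q)=0$); and the $q$-shift in the first argument, $m(qx,z;q)=1-x\,m(x,z;q)$. I will also use $\Theta(qx;q)=-x^{-1}\Theta(x;q)$, the $n=1$ instance of \eqref{equation:j-elliptic}.

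For \eqref{equation:Dn-funcEqn1}, observe from \eqref{equation:Dn-def} that $z$ enters only through the single term $m(x,z;q)$ and $z'$ enters only through the terms $m\big(-q^{\binom{n}{2}-nr}(-x)^n,z';q^{n^2}\big)$. Hence $D_n(x,z,z';q)=D_n(x,qz,z';q)$ and $D_n(x,z,z';q)=D_n(x,x^{-1}z^{-1},z';q)$ follow immediately from $m(x,z;q)=m(x,qz;q)=m(x,x^{-1}z^{-1};q)$, and $D_n(x,z,z';q)=D_n(x,z,q^{n^2}z';q)$ follows by applying periodicity in the second argument, now with base $q^{n^2}$, to each Appell function $m(\,\cdot\,,z';q^{n^2})$, which gives $m(\,\cdot\,,q^{n^2}z';q^{n^2})=m(\,\cdot\,,z';q^{n^2})$.

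For \eqref{equation:Dn-funcEqn2}, since $\Theta(qx;q)=-x^{-1}\Theta(x;q)$ it suffices to prove the homogeneity relation $D_n(qx,z,z';q)=-x\,D_n(x,z,z';q)$. Write $D_n(x,z,z';q)=m(x,z;q)-S_n(x)$, where $S_n(x):=\sum_{r=0}^{n-1}c_r\,m(a_r,z';q^{n^2})$ with $c_r:=q^{-\binom{r+1}{2}}(-x)^r$ and $a_r:=-q^{\binom{n}{2}-nr}(-x)^n$. The leading term already obeys $m(qx,z;q)=1-x\,m(x,z;q)$, so it is enough to show $S_n$ obeys the very same relation $S_n(qx)=1-x\,S_n(x)$; subtracting then gives $D_n(qx,z,z';q)=[1-x\,m(x,z;q)]-[1-x\,S_n(x)]=-x\,D_n(x,z,z';q)$, and multiplying by $\Theta(qx;q)=-x^{-1}\Theta(x;q)$ yields \eqref{equation:Dn-funcEqn2}. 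To prove $S_n(qx)=1-x\,S_n(x)$, substitute $x\mapsto qx$ and read off from the exponents in \eqref{equation:Dn-def} that $a_r\mapsto a_{r-1}$ (the two differ by a factor $q^n$) while $c_r\mapsto q^r c_r$, and that $q^r c_r=-x\,c_{r-1}$. Splitting off the $r=0$ term (whose image $m(a_{-1},z';q^{n^2})$ carries coefficient $q^0c_0=1$) and re-indexing the remaining sum by $r-1$, the summands $r=1,\dots,n-1$ collapse to $-x\,S_n(x)+x\,c_{n-1}\,m(a_{n-1},z';q^{n^2})$, so $S_n(qx)=m(a_{-1},z';q^{n^2})-x\,S_n(x)+x\,c_{n-1}\,m(a_{n-1},z';q^{n^2})$. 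Now $a_{-1}=q^{n^2}a_{n-1}$, so $m(a_{-1},z';q^{n^2})=1-a_{n-1}\,m(a_{n-1},z';q^{n^2})$ by the $q$-shift relation with base $q^{n^2}$; and $x\,c_{n-1}=a_{n-1}$, which is the arithmetic identity $2\binom{n}{2}=n^2-n$. These two facts cancel the two copies of $m(a_{n-1},z';q^{n^2})$ and leave exactly $S_n(qx)=1-x\,S_n(x)$.

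The only step requiring real care is the last one: tracking the index shift $r\mapsto r-1$, isolating the single wrap-around summand at index $-1$, and checking that the two residual coefficients $x\,c_{n-1}$ and $a_{n-1}$ agree. Everything else is a direct substitution into \eqref{equation:Dn-def} and an appeal to the standard functional equations of $m$, which is presumably why the proposition is flagged as ``easily shown.''
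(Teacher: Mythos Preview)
Your proof is correct and is precisely the kind of argument the paper has in mind when it calls these functional equations ``easily shown'' and omits a proof. All three ingredients you invoke --- the periodicity $m(x,qz;q)=m(x,z;q)$, the reflection $m(x,x^{-1}z^{-1};q)=m(x,z;q)$ (via the change-of-$z$ formula of \cite[Theorem~3.3]{HM} with $\Theta(1;q)=0$), and the shift $m(qx,z;q)=1-x\,m(x,z;q)$ --- are standard Appell function identities, and your bookkeeping for \eqref{equation:Dn-funcEqn2} (in particular the verifications $a_r(qx)=a_{r-1}(x)$, $q^rc_r=-x\,c_{r-1}$, $a_{-1}=q^{n^2}a_{n-1}$, and $x\,c_{n-1}=a_{n-1}$) is accurate.
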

\section{Proof of Theorem \ref{theorem:caseN2}}\label{section:N2-proofs}

\subsection{Proof of (\ref{equation:idN2-1})}
We specialize (\ref{equation:msplit2}).  This yields
\begin{align*}
D_2(x,z,z^2;q)
&=\frac{z^2(q^2;q^2)_{\infty}^3}{\Theta(xz;q)\Theta(z^2;q^4)}\Big [
\frac{\Theta(-qx^2z^3;q^2)\Theta(1;q^{4})}{\Theta(-qx^2z^2;q^2)\Theta(z;q^2)}
-xz \frac{\Theta(-q^2x^2z^3;q^2)\Theta(q^2;q^{4})}{\Theta(-qx^2z^2;q^2)\Theta(qz;q^2)}\Big ]\\
&=-xz\frac{z^2(q^2;q^2)_{\infty}^3\Theta(-q^2x^2z^3;q^2)\Theta(q^2;q^{4})}
{\Theta(xz;q)\Theta(z^2;q^4)\Theta(-qx^2z^2;q^2)\Theta(qz;q^2)}.
\end{align*}

\subsection{Proof of (\ref{equation:idN2-2})}
This is just \cite[Corollary $3.7$]{HM}.

\subsection{Proof of (\ref{equation:idN2-3})}
We specialize (\ref{equation:msplit2}).  This yields
{\allowdisplaybreaks \begin{align*}
D_2&(x,z,x^{-1};q)\\
&=\frac{x^{-1}(q^2;q^2)_{\infty}^3}{\Theta(xz;q)\Theta(x^{-1};q^4)}\Big [
\frac{\Theta(-qxz;q^2)\Theta(xz^2;q^{4})}{\Theta(-qx;q^2)\Theta(z;q^2)}
 -xz \frac{\Theta(-q^2xz;q^2)\Theta(q^2xz^2;q^{4})}{\Theta(-qx;q^2)\Theta(qz;q^2)}\Big ]\\
&=-\frac{(q^2;q^2)_{\infty}^3}{\Theta(xz;q)\Theta(x;q^4)}\Big [
\frac{\Theta(-qxz;q^2)\Theta(xz^2;q^{4})}{\Theta(-qx;q^2)\Theta(z;q^2)}
-xz \frac{\Theta(-q^2xz;q^2)\Theta(q^2xz^2;q^{4})}{\Theta(-qx;q^2)\Theta(qz;q^2)}\Big ],
\end{align*}}%
where we have used (\ref{equation:j-flip}) and (\ref{equation:j-elliptic}).  Combining fractions yields
\begin{align*}
D_2&(x,z,x^{-1};q)\\
&=-\frac{(q^2;q^2)_{\infty}^3}
{\Theta(xz;q)\Theta(x;q^4)\Theta(-qx;q^2)\Theta(z;q^2)\Theta(qz;q^2)}\\
&\qquad \cdot \Big [
\Theta(-qxz;q^2)\Theta(xz^2;q^{4})\Theta(qz;q^2)
 -xz\Theta(-q^2xz;q^2)\Theta(q^2xz^2;q^{4})\Theta(z;q^2)\Big ]\\
&=-\frac{(q;q)_{\infty}(q^2;q^2)_{\infty}}
{\Theta(xz;q)\Theta(x;q^4)\Theta(-qx;q^2)\Theta(z;q)}\\
&\qquad \cdot \Big [
\Theta(-qxz;q^2)\Theta(xz^2;q^{4})\Theta(qz;q^2) 
 -xz\Theta(-q^2xz;q^2)\Theta(q^2xz^2;q^{4})\Theta(z;q^2)\Big ],
\end{align*}
where we have used (\ref{equation:j-mod}).  Using (\ref{equation:Weierstrass}) with $q\to q^2$, $a\to iqz\sqrt{x}$, $b\to iz\sqrt{x}$, $c\to i\sqrt{x}$, $d\to i$, gives
\begin{align*}
D_2(x,z,x^{-1};q)
&=-\frac{(q;q)_{\infty}(q^2;q^2)_{\infty}}
{\Theta(xz;q)\Theta(x;q^4)\Theta(-qx;q^2)\Theta(z;q)}\cdot \frac{(q^4;q^4)_{\infty}}{(q^2;q^2)_{\infty}^2}\\
&\qquad \cdot z\cdot \Theta(-qxz^2;q^2)\Theta(q;q^2)\Theta(-\sqrt{x};q^2)\Theta(\sqrt{x};q^2)\\
&=-\frac{z(q;q)_{\infty}^3\Theta(-qxz^2;q^2)}
{\Theta(xz;q)\Theta(-qx;q^2)\Theta(z;q)},
\end{align*}
where we have used (\ref{equation:j-mod}) and elementary product rearrangements.

\subsection{Proof of (\ref{equation:idN2-4})}
We specialize (\ref{equation:msplit2}).  This yields
{\allowdisplaybreaks \begin{align*}
D_2&(x,z,x^{-2};q)\\
&=\frac{x^{-2}(q^2;q^2)_{\infty}^3}{\Theta(xz;q)\Theta(x^{-2};q^4)}\Big [
\frac{\Theta(-qz;q^2)\Theta(x^2z^2;q^{4})}{\Theta(-q;q^2)\Theta(z;q^2)}
 -xz \frac{\Theta(-q^2z;q^2)\Theta(q^2x^2z^2;q^{4})}{\Theta(-q;q^2)\Theta(qz;q^2)}\Big ]\\
&=-\frac{(q^2;q^2)_{\infty}^3}{\Theta(xz;q)\Theta(x^{2};q^4)}\Big [
\frac{\Theta(-qz;q^2)\Theta(x^2z^2;q^{4})}{\Theta(-q;q^2)\Theta(z;q^2)}
 -xz \frac{\Theta(-q^2z;q^2)\Theta(q^2x^2z^2;q^{4})}{\Theta(-q;q^2)\Theta(qz;q^2)}\Big ],
\end{align*}}%
where we have used (\ref{equation:j-flip}) and (\ref{equation:j-elliptic}).  Using elementary product rearrangements and combining fractions yields
{\allowdisplaybreaks \begin{align*}
D_2&(x,z,x^{-2};q)\\
&=-\frac{(q^2;q^2)_{\infty}^3}{\Theta(xz;q)\Theta(x^{2};q^4)\Theta(z;q^2)\Theta(qz;q^2)}\cdot 
\frac{(q;q)_{\infty}^2(q^4;q^4)_{\infty}^2}{(q^2;q^2)_{\infty}^5}\\
&\qquad \cdot \Big [
\Theta(-qz;q^2)\Theta(x^2z^2;q^{4})\Theta(qz;q^2)
 -xz \cdot \Theta(-q^2z;q^2)\Theta(q^2x^2z^2;q^{4})\Theta(z;q^2)\Big ]\\
&=-\frac{1}{\Theta(xz;q)\Theta(x^{2};q^4)\Theta(z;q)}\cdot 
\frac{(q;q)_{\infty}^3(q^4;q^4)_{\infty}^2}{(q^2;q^2)_{\infty}^4}\\
&\qquad \cdot \Big [
\Theta(-qz;q^2)\Theta(x^2z^2;q^{4})\Theta(qz;q^2)
 -xz \cdot \Theta(-q^2z;q^2)\Theta(q^2x^2z^2;q^{4})\Theta(z;q^2)\Big ],
\end{align*}}%
where we have used (\ref{equation:j-mod}).  Using (\ref{equation:j-elliptic}) and (\ref{equation:j-roots}) yields 
\begin{align*}
D_2(x,z,x^{-2};q)
&=-\frac{1}{\Theta(xz;q)\Theta(x^{2};q^4)\Theta(z;q)}\cdot 
\frac{(q;q)_{\infty}^3(q^4;q^4)_{\infty}}{(q^2;q^2)_{\infty}^2}\\
&\qquad \cdot \Big [
\Theta(q^2z^2;q^4)\Theta(x^2z^2;q^{4})
-x \cdot \Theta(z^2;q^4)\Theta(q^2x^2z^2;q^{4})\Big ].
\end{align*}
Using (\ref{equation:H1Thm1.1}) with $q\to q^2$, $x\to x$, $y=\to -xz^2$ gives
\begin{align*}
D_2(x,z,x^{-2};q)
&=-\frac{1}{\Theta(xz;q)\Theta(x^{2};q^4)\Theta(z;q)}\cdot 
\frac{(q;q)_{\infty}^3(q^4;q^4)_{\infty}}{(q^2;q^2)_{\infty}^2}
\cdot \Theta(x;q^2)\Theta(-xz^2;q^2)\\
&=-\frac{(q;q)_{\infty}^3\Theta(-xz^2;q^2)}{\Theta(xz;q)\Theta(-x;q^2)\Theta(z;q)},
\end{align*}
where we have used (\ref{equation:j-roots}).

\subsection{Proof of (\ref{equation:idN2-5})}
We specialize (\ref{equation:msplit2}).  This yields
\begin{align*}
D_2&(u^3,z,u^{-4};q)\\
&=\frac{u^{-4}(q^2;q^2)_{\infty}^3}{\Theta(u^3z;q)\Theta(u^{-4};q^4)}\Big [
\frac{\Theta(-qu^2z;q^2)\Theta(u^4z^2;q^{4})}{\Theta(-qu^2;q^2)\Theta(z;q^2)}
 -u^3z \frac{\Theta(-q^2u^2z;q^2)\Theta(q^2u^4z^2;q^{4})}{\Theta(-qu^2;q^2)\Theta(qz;q^2)}\Big ]\\
&=-\frac{(q^2;q^2)_{\infty}^3}{\Theta(u^3z;q)\Theta(u^{4};q^4)}\Big [
\frac{\Theta(-qu^2z;q^2)\Theta(u^4z^2;q^{4})}{\Theta(-qu^2;q^2)\Theta(z;q^2)}
 -u^3z \frac{\Theta(-q^2u^2z;q^2)\Theta(q^2u^4z^2;q^{4})}{\Theta(-qu^2;q^2)\Theta(qz;q^2)}\Big ],
\end{align*}
where we have used (\ref{equation:j-elliptic}).  Combining fractions yields
{\allowdisplaybreaks \begin{align*}
D_2&(u^3,z,u^{-4};q)\\
&=-\frac{(q^2;q^2)_{\infty}^3}
{\Theta(u^3z;q)\Theta(u^{4};q^4)\Theta(-qu^2;q^2)\Theta(z;q^2)\Theta(qz;q^2)}\\
&\qquad \cdot \Big [
\Theta(-qu^2z;q^2)\Theta(u^4z^2;q^{4})\Theta(qz;q^2)
 -u^3z \Theta(-q^2u^2z;q^2)\Theta(q^2u^4z^2;q^{4})\Theta(z;q^2)\Big ]\\
&=-\frac{(q^2;q^2)_{\infty}^3}
{\Theta(u^3z;q)\Theta(u^{4};q^4)\Theta(-qu^2;q^2)\Theta(z;q)}
\cdot \frac{(q;q)_{\infty}}{(q^2;q^2)_{\infty}^2}\\
&\qquad \cdot \Big [
\Theta(-qu^2z;q^2)\Theta(u^4z^2;q^{4})\Theta(qz;q^2)
 -u \Theta(-u^2z;q^2)\Theta(q^2u^4z^2;q^{4})\Theta(z;q^2)\Big ],
\end{align*}}%
where we have used (\ref{equation:j-mod}) and (\ref{equation:j-elliptic}).  Using (\ref{equation:j-roots}) yields
{\allowdisplaybreaks \begin{align*}
D_2(u^3,z,u^{-4};q)
&=-\frac{(q^2;q^2)_{\infty}^3}
{\Theta(u^3z;q)\Theta(u^{4};q^4)\Theta(-qu^2;q^2)\Theta(z;q)}
\cdot \frac{(q;q)_{\infty}}{(q^2;q^2)_{\infty}^2}\cdot \frac{(q^4;q^4)_{\infty}}{(q^2;q^2)_{\infty}^2}\\
&\qquad \cdot \Big [
\Theta(-qu^2z;q^2)\Theta(u^2z;q^{2})\Theta(-u^2z;q^{2})\Theta(qz;q^2)\\
&\qquad \qquad -u \Theta(-u^2z;q^2)\Theta(qu^2z;q^{2})\Theta(-qu^2z;q^{2})\Theta(z;q^2)\Big ]\\
&=-\frac{(q;q)_{\infty}(q^4;q^4)_{\infty}\Theta(-qu^2z;q^2)\Theta(-u^2z;q^{2})}
{\Theta(u^3z;q)\Theta(u^{4};q^4)\Theta(-qu^2;q^2)\Theta(z;q)(q^2;q^2)_{\infty}}
\cdot\\
&\qquad \cdot \Big [
\Theta(u^2z;q^{2})\Theta(qz;q^2) -u \Theta(qu^2z;q^{2})\Theta(z;q^2)\Big ].
\end{align*}}%
Using (\ref{equation:j-mod}) and then (\ref{equation:H1Thm1.1}) with $x\to u$, $y\to -zu$ yields
\begin{align*}
D_2(u^3,z,u^{-4};q)
&=-\frac{(q;q)_{\infty}(q^4;q^4)_{\infty}\Theta(-u^2z;q)}
{\Theta(u^3z;q)\Theta(u^{4};q^4)\Theta(-qu^2;q^2)\Theta(z;q)(q^2;q^2)_{\infty}}
\cdot \frac{(q^2;q^2)_{\infty}^2}{(q;q)_{\infty}}\\
\cdot\\
&\qquad \cdot \Big [
\Theta(u^2z;q^{2})\Theta(qz;q^2) -u \Theta(qu^2z;q^{2})\Theta(z;q^2)\Big ]\\
&=-\frac{(q^2;q^2)_{\infty}(q^4;q^4)_{\infty}\Theta(-u^2z;q)}
{\Theta(u^3z;q)\Theta(u^{4};q^4)\Theta(-qu^2;q^2)\Theta(z;q)}
\cdot \Theta(u;q)\Theta(-zu;q).
\end{align*}

\subsection{Proof of (\ref{equation:idN2-6})}
We specialize (\ref{equation:msplit2}).  This yields
\begin{align*}
D_2&(qz^{-2},z,z^3;q^3)\\
&=\frac{z^3(q^6;q^6)_{\infty}^3}{\Theta(qz^{-1};q^3)\Theta(z^3;q^{12})}
\Big [\frac{\Theta(-q^5;q^6)\Theta(z^{-1};q^{12})}{\Theta(-q^5z^{-1};q^6)\Theta(z;q^6)}
 -qz^{-1} \frac{\Theta(-q^8;q^6)\Theta(q^6z^{-1};q^{12})}{\Theta(-q^5z^{-1};q^6)\Theta(q^3z;q^6)}\Big ]\\
&=\frac{z^3(q^6;q^6)_{\infty}^3}{\Theta(q^2z;q^3)\Theta(z^3;q^{12})}
\Big [-z^{-1}\frac{\Theta(-q^5;q^6)\Theta(z;q^{12})}{\Theta(-qz;q^6)\Theta(z;q^6)}
 -q^{-1}z^{-1} \frac{\Theta(-q^2;q^6)\Theta(q^6z;q^{12})}{\Theta(-qz;q^6)\Theta(q^3z;q^6)}\Big ],
\end{align*}
where we have used (\ref{equation:j-flip}) and (\ref{equation:j-elliptic}).   Combining fractions and employing (\ref{equation:j-mod}) yields
\begin{align*}
D_2(qz^{-2},z,z^3;q^3)
&=-\frac{q^{-1}z^2(q^6;q^6)_{\infty}^3}
{\Theta(q^2z;q^3)\Theta(z^3;q^{12})\Theta(-qz;q^6)\Theta(z;q^6)\Theta(q^3z;q^6)} \\
&\qquad \cdot \Big [q\Theta(-q^5;q^6)\Theta(z;q^{12})\Theta(q^3z;q^6)
+\Theta(-q^2;q^6)\Theta(q^6z;q^{12})\Theta(z;q^6)\Big ]\\
&=-\frac{q^{-1}z^2(q^6;q^6)_{\infty}^3}
{\Theta(q^2z;q^3)\Theta(z^3;q^{12})\Theta(-qz;q^6)\Theta(z;q^3)} 
\cdot \frac{(q^3;q^3)_{\infty}}{(q^6;q^6)_{\infty}^2}\\
&\qquad \cdot \Big [q\Theta(-q^5;q^6)\Theta(z;q^{12})\Theta(q^3z;q^6)
+\Theta(-q^2;q^6)\Theta(q^6z;q^{12})\Theta(z;q^6)\Big ].
\end{align*}
Using (\ref{equation:ASD-cor1}) yields
\begin{align*}
D_2(qz^{-2},z,z^3;q^3)
&=-\frac{q^{-1}z^2(q^3;q^3)_{\infty}^4(q^6;q^6)_{\infty}\Theta(z;q^4)}
{(q;q)_{\infty}\Theta(q^2z;q^3)\Theta(z^3;q^{12})\Theta(-qz;q^6)\Theta(z;q^3)}.
\end{align*}

\subsection{Proof of (\ref{equation:idN2-7})}
We specialize (\ref{equation:msplit2}).  This yields
{\allowdisplaybreaks \begin{align*}
D_2&(q^2z^{-2},z,z^3;q^3)\\
&=\frac{z^3(q^6;q^6)_{\infty}^3}{\Theta(q^2z^{-1};q^3)\Theta(z^3;q^{12})}\Big [
\frac{\Theta(-q^7;q^6)\Theta(z^{-1};q^{12})}{\Theta(-q^7z^{-1};q^6)\Theta(z;q^6)}
 -q^2z^{-1} \frac{\Theta(-q^{10};q^6)\Theta(q^6z^{-1};q^{12})}{\Theta(-q^7z^{-1};q^6)\Theta(q^3z;q^6)}\Big ]\\
&=- \frac{q^{-2}z^2(q^6;q^6)_{\infty}^3}{\Theta(qz;q^3)\Theta(z^3;q^{12})}\Big [
q\frac{\Theta(-q;q^6)\Theta(z;q^{12})}{\Theta(-q^7z^{-1};q^6)\Theta(z;q^6)}
 +\frac{\Theta(-q^{4};q^6)\Theta(q^6z;q^{12})}{\Theta(-q^7z^{-1};q^6)\Theta(q^3z;q^6)}\Big ],
\end{align*}}%
where we have used (\ref{equation:j-flip}), (\ref{equation:j-elliptic}), and simplified.  Combining fractions and using (\ref{equation:j-mod}) yields
\begin{align*}
D_2(q^2z^{-2},z,z^3;q^3)
&=- \frac{q^{-2}z^2(q^6;q^6)_{\infty}^3}
{\Theta(qz;q^3)\Theta(z^3;q^{12})\Theta(-q^7z^{-1};q^6)\Theta(z;q^6)\Theta(q^3z;q^6)}
\\
&\qquad \cdot \Big [
q\Theta(-q;q^6)\Theta(z;q^{12})\Theta(q^3z;q^6)
+\Theta(-q^{4};q^6)\Theta(q^6z;q^{12})\Theta(z;q^6)\Big ]\\
&=- \frac{q^{-2}z^2(q^6;q^6)_{\infty}^3}
{\Theta(qz;q^3)\Theta(z^3;q^{12})\Theta(-q^7z^{-1};q^6)\Theta(z;q^3)}
\cdot \frac{(q^3;q^3)_{\infty}}{(q^6;q^6)_{\infty}^2}\\
&\qquad \cdot \Big [
q\Theta(-q;q^6)\Theta(z;q^{12})\Theta(q^3z;q^6)
+\Theta(-q^{4};q^6)\Theta(q^6z;q^{12})\Theta(z;q^6)\Big ].
\end{align*}
Using (\ref{equation:ASD-cor1}) and then (\ref{equation:j-elliptic}) and (\ref{equation:j-flip}) yields
\begin{align*}
D_2(q^2z^{-2},z,z^3;q^3)
&=- \frac{q^{-2}z^2(q^3;q^3)_{\infty}(q^6;q^6)_{\infty}}
{\Theta(qz;q^3)\Theta(z^3;q^{12})\Theta(-q^7z^{-1};q^6)\Theta(z;q^3)}
 \cdot \frac{\Theta(z;q^4)(q^3;q^3)_{\infty}^3}{(q;q)_{\infty}}\\
 &=- \frac{q^{-1}z(q^3;q^3)_{\infty}^4(q^6;q^6)_{\infty}\Theta(z;q^4)}
{(q;q)_{\infty}\Theta(qz;q^3)\Theta(z^3;q^{12})\Theta(-q^5z;q^6)\Theta(z;q^3)}.
\end{align*}

\subsection{Proof of (\ref{equation:idN2-8})}
We specialize (\ref{equation:msplit2}).  This yields
\begin{align*}
D_2(x,z,-qx^{-2}z^{-2};q)
&=-\frac{qx^{-2}z^{-2}(q^2;q^2)_{\infty}^3}{\Theta(xz;q)\Theta(-qx^{-2}z^{-2};q^4)}\Big [
\frac{\Theta(q^2z^{-1};q^2)\Theta(-q^{-1}x^2z^4;q^{4})}{\Theta(q^2z^{-2};q^2)\Theta(z;q^2)}\\
&\qquad  -xz \frac{\Theta(q^3z^{-1};q^2)\Theta(-qx^2z^4;q^{4})}{\Theta(q^2z^{-2};q^2)\Theta(qz;q^2)}\Big ]\\
&=-\frac{qx^{-2}z^{-2}(q^2;q^2)_{\infty}^3}{\Theta(xz;q)\Theta(-q^3x^{2}z^{2};q^4)}\Big [
\frac{\Theta(z;q^2)\Theta(-q^{-1}x^2z^4;q^{4})}{\Theta(z^{2};q^2)\Theta(z;q^2)}\\
&\qquad +q^{-1}xz^2 \frac{\Theta(qz;q^2)\Theta(-qx^2z^4;q^{4})}{\Theta(z^{2};q^2)\Theta(qz;q^2)}\Big ],
\end{align*}
where we have used (\ref{equation:j-elliptic}) and (\ref{equation:j-flip}).  Simplifying yields
\begin{align*}
D_2(x,z,-qx^{-2}z^{-2};q)
&=-\frac{qx^{-2}z^{-2}(q^2;q^2)_{\infty}^3}{\Theta(xz;q)\Theta(-q^3x^{2}z^{2};q^4)\Theta(z^{2};q^2)}\\
&\qquad \cdot \Big [
\Theta(-q^{-1}x^2z^4;q^{4})
 +q^{-1}xz^2\Theta(-qx^2z^4;q^{4})\Big ].
\end{align*}
Using (\ref{equation:j-elliptic}) yields 
{\allowdisplaybreaks \begin{align*}
D_2(x,z,-qx^{-2}z^{-2};q)
&=-\frac{qx^{-2}z^{-2}(q^2;q^2)_{\infty}^3}{\Theta(xz;q)\Theta(-q^3x^{2}z^{2};q^4)\Theta(z^{2};q^2)}\\
&\qquad \cdot \Big [
\Theta(-q^3q^{-4}x^2z^4;q^{4})
 +q^{-1}xz^2\Theta(-qx^2z^4;q^{4})\Big ]\\
 &=-\frac{qx^{-2}z^{-2}(q^2;q^2)_{\infty}^3}{\Theta(xz;q)\Theta(-q^3x^{2}z^{2};q^4)\Theta(z^{2};q^2)}\\
&\qquad \cdot \Big [
q^{-1}x^{2}z^4\Theta(-q^3x^2z^4;q^{4})
 +q^{-1}xz^2\Theta(-qx^2z^4;q^{4})\Big ].
 \end{align*}}%
 Simplifying and using (\ref{equation:j-split}) yields
{\allowdisplaybreaks \begin{align*}
D_2(x,z,-qx^{-2}z^{-2};q)
 &=-\frac{x^{-1}(q^2;q^2)_{\infty}^3}{\Theta(xz;q)\Theta(-q^3x^{2}z^{2};q^4)\Theta(z^{2};q^2)}\\
&\qquad \cdot \Big [
xz^2\Theta(-q^3x^2z^4;q^{4})
 +\Theta(-qx^2z^4;q^{4})\Big ]\\
  &=-\frac{x^{-1}(q^2;q^2)_{\infty}^3\Theta(-xz^2;q)}
  {\Theta(xz;q)\Theta(-q^3x^{2}z^{2};q^4)\Theta(z^{2};q^2)}.
\end{align*}}%

\section{Proof of Theorem \ref{theorem:caseN3}}\label{section:N3-proofs}

The first equality in each equation follows from (\ref{equation:Dn-funcEqn1}).
\subsection{Proof of (\ref{equation:idN3-1})}  We specialize (\ref{equation:msplit3}) using the second term in (\ref{equation:idN3-1}).  We have

\begin{align*}
D_{3}(z^{-4},z^3,q^3z^9;q)&=\frac{q^3z^9(q^3;q^3)_{\infty}^3}{\Theta(z^{-1};q)\Theta(q^3z^9;q^{9})\Theta(q^3z^{-3};q^3)}
\Big [ 
\frac{1}{z^3}\frac{\Theta(q^3;q^3)\Theta(q^{-3};q^{9})}{\Theta(z^3;q^3)}\\
&\ \ \ \ \ -\frac{z^{-4}}{q}\frac{\Theta(q^4;q^3)\Theta(1;q^{9})}{\Theta(qz^3;q^3)}
+\frac{z^{-5}}{q}\frac{\Theta(q^5;q^3)\Theta(q^{3};q^{9})}{\Theta(q^2z^3;q^3)}\Big ].
\end{align*}
We use the fact that $\Theta(q^3;q^3)=0$ and $\Theta(1;q^{9})=0$ to obtain
\begin{align*}
D_{3}(z^{-4},z^3,q^3z^9;q)&=\frac{q^3z^9(q^3;q^3)_{\infty}^3}{\Theta(z^{-1};q)\Theta(q^3z^9;q^{9})\Theta(q^3z^{-3};q^3)}
\Big [ \frac{z^{-5}}{q}\frac{\Theta(q^5;q^3)\Theta(q^{3};q^{9})}{\Theta(q^2z^3;q^3)}\Big ].
\end{align*}
We rewrite a few theta functions using (\ref{equation:j-flip}) yields
\begin{align*}
D_{3}(z^{-4},z^3,q^3z^9;q)&=\frac{q^3z^9(q^3;q^3)_{\infty}^3}{\Theta(qz;q)\Theta(q^3z^9;q^{9})\Theta(z^{3};q^3)}
\Big [ \frac{z^{-5}}{q}\frac{\Theta(q^5;q^3)\Theta(q^{3};q^{9})}{\Theta(q^2z^3;q^3)}\Big ].
\end{align*}
Using (\ref{equation:j-elliptic}) with $n=1$ and simplifying gives
\begin{align*}
D_{3}(z^{-4},z^3,q^3z^9;q)&=\frac{z^5(q^3;q^3)_{\infty}^3}{\Theta(z;q)\Theta(q^3z^9;q^{9})\Theta(z^{3};q^3)}
\cdot \frac{\Theta(q^2;q^3)\Theta(q^{3};q^{9})}{\Theta(q^2z^3;q^3)}.
\end{align*}
Noting that $\Theta(q^2;q^3)=\Theta(q;q^3)=(q;q)_{\infty}$, and $\Theta(q^3;q^9)=(q^3;q^3)_{\infty}$ brings us to
\begin{align*}
D_{3}(z^{-4},z^3,q^3z^9;q)&=\frac{z^5(q;q)_{\infty}(q^3;q^3)_{\infty}^4}
{\Theta(z;q)\Theta(q^3z^9;q^{9})\Theta(z^{3};q^3)\Theta(q^2z^3;q^3)}.\qedhere
\end{align*}

\subsection{Proof of (\ref{equation:idN3-2})}
We specialize (\ref{equation:msplit3}) using the second term in (\ref{equation:idN3-2}).  This yields
{\allowdisplaybreaks \begin{align*}
D_{3}(z^{-4},z^3,q^6z^9;q)&=\frac{q^6z^9(q^3;q^3)_{\infty}^3}{\Theta(z^{-1};q)\Theta(q^6z^9;q^{9})\Theta(q^6z^{-3};q^3)}
\Big [ 
\frac{1}{z^3}\frac{\Theta(q^6;q^3)\Theta(q^{-6};q^{9})}{\Theta(z^3;q^3)}\\
&\ \ \ \ \ -\frac{z^{-4}}{q}\frac{\Theta(q^7;q^3)\Theta(q^{-3};q^{9})}{\Theta(qz^3;q^3)}
+\frac{z^{-5}}{q}\frac{\Theta(q^8;q^3)\Theta(1;q^{9})}{\Theta(q^2z^3;q^3)}\Big ].
\end{align*}
We use that facts that $\Theta(q^6;q^3)=0$ and $\Theta(1;q^9)=0$ and then simplify to obtain
\begin{align*}
D_{3}(z^{-4},z^3,q^6z^9;q)&=-\frac{q^5z^5(q^3;q^3)_{\infty}^3}{\Theta(z^{-1};q)\Theta(q^6z^9;q^{9})\Theta(q^6z^{-3};q^3)}
 \cdot \frac{\Theta(q^7;q^3)\Theta(q^{-3};q^{9})}{\Theta(qz^3;q^3)}.
\end{align*}}%
Using (\ref{equation:j-flip}) produces
\begin{align*}
D_{3}(z^{-4},z^3,q^6z^9;q)&=-\frac{q^5z^5(q^3;q^3)_{\infty}^3}{\Theta(qz;q)\Theta(q^6z^9;q^{9})\Theta(q^{-3}z^{3};q^3)}
 \cdot \frac{\Theta(q^7;q^3)\Theta(q^{12};q^{9})}{\Theta(qz^3;q^3)}.
\end{align*}
We use (\ref{equation:j-elliptic}) to rewrite three of the theta functions.  This gives
\begin{align*}
D_{3}(z^{-4},z^3,q^6z^9;q)&=-\frac{q^5z^5(q^3;q^3)_{\infty}^3}
{(-1)z^{-1}\Theta(z;q)\Theta(q^6z^9;q^{9})(-1)q^{-3}z^{3}\Theta(z^{3};q^3)}\\
& \ \ \ \ \  \cdot \frac{(-1)^2q^{-3}q^{-2}\Theta(q^7;q^3)(-1)q^{-3}\Theta(q^{12};q^{9})}{\Theta(qz^3;q^3)}\\
&=\frac{z^3(q^3;q^3)_{\infty}^3}
{\Theta(z;q)\Theta(q^6z^9;q^{9})\Theta(z^{3};q^3)}
 \cdot \frac{\Theta(q^2;q^3)\Theta(q^{3};q^{9})}{\Theta(qz^3;q^3)}.
\end{align*}
Noting that $\Theta(q^2;q^3)=\Theta(q;q^3)=(q;q)_{\infty}$, $\Theta(q^3;q^9)=(q^3;q^3)_{\infty}$ brings us to
\begin{align*}
D_{3}(z^{-4},z^3,q^6z^9;q)
&=\frac{z^3(q;q)_{\infty}(q^3;q^3)_{\infty}^4}
{\Theta(z;q)\Theta(q^6z^9;q^{9})\Theta(z^{3};q^3)\Theta(qz^3;q^3)}.\qedhere
\end{align*}

\subsection{Proof of (\ref{equation:idN3-3})}
We specialize (\ref{equation:msplit3}) using the second term in (\ref{equation:idN3-3}).  This yields
\begin{align*}
D_{3}(u^{-5},u^3,q^3u^9;q)&=\frac{q^3u^9(q^3;q^3)_{\infty}^3}{\Theta(u^{-2};q)\Theta(q^3u^9;q^{9})\Theta(q^3u^{-6};q^3)}
\Big [ 
\frac{1}{u^3}\frac{\Theta(q^3u^{-3};q^3)\Theta(q^{-3};q^{9})}{\Theta(u^3;q^3)}\\
&\ \ \ \ \ -\frac{u^{-5}}{q}\frac{\Theta(q^4u^{-3};q^3)\Theta(1;q^{9})}{\Theta(qu^3;q^3)}
+\frac{u^{-7}}{q}\frac{\Theta(q^5u^{-3};q^3)\Theta(q^{3};q^{9})}{\Theta(q^2u^3;q^3)}\Big ].
\end{align*}
Noting that $\Theta(1;q^9)=0$ we get
\begin{align*}
D_{3}(u^{-5},u^3,q^3u^9;q)&=\frac{q^3u^9(q^3;q^3)_{\infty}^3}{\Theta(u^{-2};q)\Theta(q^3u^9;q^{9})\Theta(q^3u^{-6};q^3)}\\
& \ \ \ \ \ \cdot \Big [ 
\frac{1}{u^3}\frac{\Theta(q^3u^{-3};q^3)\Theta(q^{-3};q^{9})}{\Theta(u^3;q^3)}
+\frac{u^{-7}}{q}\frac{\Theta(q^5u^{-3};q^3)\Theta(q^{3};q^{9})}{\Theta(q^2u^3;q^3)}\Big ]\\
&=\frac{q^3u^9(q^3;q^3)_{\infty}^3}{\Theta(qu^{2};q)\Theta(q^3u^9;q^{9})\Theta(u^{6};q^3)}\\
& \ \ \ \ \ \cdot \Big [ 
\frac{1}{u^3}\frac{\Theta(u^{3};q^3)\Theta(q^{12};q^{9})}{\Theta(u^3;q^3)}
+\frac{u^{-7}}{q}\frac{\Theta(q^{5}u^{-3};q^3)\Theta(q^{3};q^{9})}{\Theta(q^2u^3;q^3)}\Big ],
\end{align*}
where for the last equality we used (\ref{equation:j-flip}).  Using (\ref{equation:j-elliptic}) with $n=1$ gives
{\allowdisplaybreaks \begin{align*}
D_{3}(u^{-5},u^3,q^3u^9;q)&=\frac{q^3u^9(q^3;q^3)_{\infty}^3}
{(-1)u^{-2}\Theta(u^{2};q)\Theta(q^3u^9;q^{9})\Theta(u^{6};q^3)}\\
& \qquad \cdot \Big [ 
\frac{1}{u^3}\frac{\Theta(u^{3};q^3)(-1)q^{-3}\Theta(q^{3};q^{9})}{\Theta(u^3;q^3)}\\
&\qquad \qquad+\frac{u^{-7}}{q}\frac{(-1)q^{-2}u^3\Theta(q^{2}u^{-3};q^3)\Theta(q^{3};q^{9})}{\Theta(q^2u^3;q^3)}\Big ]\\
&=\frac{q^3u^{11}(q^3;q^3)_{\infty}^3}{\Theta(u^{2};q)\Theta(q^3u^9;q^{9})\Theta(u^{6};q^3)}\\
& \qquad \cdot \Big [ 
q^{-3}u^{-3}\frac{\Theta(u^{3};q^3)\Theta(q^{3};q^{9})}{\Theta(u^3;q^3)}
+u^{-4}q^{-3}\frac{\Theta(q^{2}u^{-3};q^3)\Theta(q^{3};q^{9})}{\Theta(q^2u^3;q^3)}\Big ].
\end{align*}}%
We again use (\ref{equation:j-flip}) and then simplify to get
\begin{align*}
D_{3}(u^{-5},u^3,q^3u^9;q)
&=\frac{q^3u^{11}(q^3;q^3)_{\infty}^3}{\Theta(u^{2};q)\Theta(q^3u^9;q^{9})\Theta(u^{6};q^3)}\\
& \ \ \ \ \  \cdot \Big [ 
q^{-3}u^{-3}\frac{\Theta(u^{3};q^3)\Theta(q^{3};q^{9})}{\Theta(u^3;q^3)}
+u^{-4}q^{-3}\frac{\Theta(qu^{3};q^3)\Theta(q^{3};q^{9})}{\Theta(q^2u^3;q^3)}\Big ]\\
&=\frac{u^7(q^3;q^3)_{\infty}^3}{\Theta(u^{2};q)\Theta(q^3u^9;q^{9})\Theta(u^{6};q^3)}
\cdot \Big [ 
u\Theta(q^{3};q^{9})
+\frac{\Theta(qu^{3};q^3)\Theta(q^{3};q^{9})}{\Theta(q^2u^3;q^3)}\Big ].
\end{align*}
Noting that $\Theta(q^3;q^9)=(q^3;q^3)_{\infty}$ and then combining fractions produces
\begin{align*}
D_{3}(u^{-5},u^3,q^3u^9;q)
&=\frac{u^7(q^3;q^3)_{\infty}^4}{\Theta(u^{2};q)\Theta(q^3u^9;q^{9})\Theta(u^{6};q^3)}
 \cdot \frac{\Theta(qu^{3};q^3)+u\Theta(q^2u^3;q^3)}{\Theta(q^2u^3;q^3)}\\
&=\frac{u^7(q^3;q^3)_{\infty}^4}{\Theta(u^{2};q)\Theta(q^3u^9;q^{9})\Theta(u^{6};q^3)\Theta(q^2u^3;q^3)}
 \cdot \frac{(q;q)_{\infty}\Theta(u^2;q)}{\Theta(u;q)}\\
 &=\frac{u^7(q;q)_{\infty}(q^3;q^3)_{\infty}^4}{\Theta(q^3u^9;q^{9})\Theta(u^{6};q^3)\Theta(q^2u^3;q^3)\Theta(u;q)},
\end{align*}
where we used the quintuple product identity (\ref{equation:H1Thm1.0}) and then simplified.

\subsection{Proof of (\ref{equation:idN3-4})}
We specialize (\ref{equation:msplit3}) using the second term in (\ref{equation:idN3-4}).  This yields
{\allowdisplaybreaks \begin{align*}
D_{3}(u^{-5},u^3,q^6u^9;q)&=\frac{q^6u^9(q^3;q^3)_{\infty}^3}{\Theta(u^{-2};q)\Theta(q^6u^9;q^{9})\Theta(q^6u^{-6};q^3)}
\Big [ 
\frac{1}{u^3}\frac{\Theta(q^6u^{-3};q^3)\Theta(q^{-6};q^{9})}{\Theta(u^3;q^3)} \\
&\ \ \ \ \ -\frac{u^{-5}}{q}\frac{\Theta(q^7u^{-3};q^3)\Theta(q^{-3};q^{9})}{\Theta(qu^3;q^3)}
+\frac{u^{-7}}{q}\frac{\Theta(q^8u^{-3};q^3)\Theta(1;q^{9})}{\Theta(q^2u^3;q^3)}\Big ]\\
&=\frac{q^6u^9(q^3;q^3)_{\infty}^3}{\Theta(u^{-2};q)\Theta(q^6u^9;q^{9})\Theta(q^6u^{-6};q^3)}\\
& \ \ \ \ \ \cdot \Big [ 
u^{-3}\frac{\Theta(q^6u^{-3};q^3)\Theta(q^{-6};q^{9})}{\Theta(u^3;q^3)} 
 -u^{-5}q^{-1}\frac{\Theta(q^7u^{-3};q^3)\Theta(q^{-3};q^{9})}{\Theta(qu^3;q^3)}\Big ],
\end{align*}}%
where we have used the fact that $\Theta(1;q^9)=0$.  Using (\ref{equation:j-flip}) and then (\ref{equation:j-elliptic}) brings us to
{\allowdisplaybreaks \begin{align*}
D_{3}(u^{-5},u^3,q^6u^9;q)&=\frac{q^6u^9(q^3;q^3)_{\infty}^3}
{\Theta(qu^{2};q)\Theta(q^6u^9;q^{9})\Theta(q^{-3}u^{6};q^3)}\\
& \ \ \ \ \ \cdot \Big [ 
u^{-3}\frac{\Theta(q^{-3}u^{3};q^3)\Theta(q^{15};q^{9})}{\Theta(u^3;q^3)} 
 -u^{-5}q^{-1}\frac{\Theta(q^{-4}u^{3};q^3)\Theta(q^{12};q^{9})}{\Theta(qu^3;q^3)}\Big ]\\
&=\frac{q^6u^9(q^3;q^3)_{\infty}^3}
{\Theta(qu^{2};q)\Theta(q^6u^9;q^{9})\Theta(q^{-3}u^{6};q^3)}\\
& \ \ \ \ \ \cdot \Big [ 
-q^{-6}u^{-3}\frac{\Theta(q^{-3}u^{3};q^3)\Theta(q^{6};q^{9})}{\Theta(u^3;q^3)} 
 +u^{-5}q^{-4}\frac{\Theta(q^{-4}u^{3};q^3)\Theta(q^{3};q^{9})}{\Theta(qu^3;q^3)}\Big ].
\end{align*}}%
Noting that $\Theta(q^6;q^9)=\Theta(q^3;q^9)=(q^3;q^3)_{\infty}$ gives us
\begin{align*}
D_{3}(u^{-5},u^3,q^6u^9;q)&=\frac{q^6u^9(q^3;q^3)_{\infty}^4}
{\Theta(qu^{2};q)\Theta(q^6u^9;q^{9})\Theta(q^{-3}u^{6};q^3)}\\
& \ \ \ \ \ \cdot \Big [ 
-q^{-6}u^{-3}\frac{\Theta(q^{-3}u^{3};q^3)}{\Theta(u^3;q^3)} 
 +u^{-5}q^{-4}\frac{\Theta(q^{-4}u^{3};q^3)}{\Theta(qu^3;q^3)}\Big ].
\end{align*}
Using (\ref{equation:j-elliptic}) gives
{\allowdisplaybreaks \begin{align*}
D_{3}(u^{-5},u^3,q^6u^9;q)&=\frac{q^9u^5(q^3;q^3)_{\infty}^4}
{\Theta(u^{2};q)\Theta(q^6u^9;q^{9})\Theta(q^{-3}u^{6};q^3)}
 \cdot \Big [ 
q^{-9}\frac{\Theta(u^{3};q^3)}{\Theta(u^3;q^3)} 
 +q^{-9}u\frac{\Theta(q^{2}u^{3};q^3)}{\Theta(qu^3;q^3)}\Big ]\\
 &=\frac{u^5(q^3;q^3)_{\infty}^4}
{\Theta(u^{2};q)\Theta(q^6u^9;q^{9})\Theta(u^{6};q^3)}
 \cdot \Big [ 
1
 +u\frac{\Theta(q^{2}u^{3};q^3)}{\Theta(qu^3;q^3)}\Big ]\\
&=\frac{u^5(q^3;q^3)_{\infty}^4}
{\Theta(u^{2};q)\Theta(q^6u^9;q^{9})\Theta(u^{6};q^3)}
 \cdot \frac{\Theta(qu^3;q^3)+u\Theta(q^{2}u^{3};q^3)}{\Theta(qu^3;q^3)}.
\end{align*}}%
Using the quintuple product identity (\ref{equation:H1Thm1.0}) brings us to
{\allowdisplaybreaks \begin{align*}
D_{3}(u^{-5},u^3,q^6u^9;q)
 &=\frac{u^5(q^3;q^3)_{\infty}^4}
{\Theta(u^{2};q)\Theta(q^6u^9;q^{9})\Theta(u^{6};q^3)\Theta(qu^3;q^3)}
 \cdot \frac{(q;q)_{\infty}\Theta(u^2;q)}{\Theta(u;q)}\\
  &=\frac{u^5(q;q)_{\infty}(q^3;q^3)_{\infty}^4}
{\Theta(q^6u^9;q^{9})\Theta(u^{6};q^3)\Theta(qu^3;q^3)\Theta(u;q)}.
\end{align*}}%

\subsection{Proof of (\ref{equation:idN3-5})}
We specialize (\ref{equation:msplit3}) using the second term in (\ref{equation:idN3-5}).  This yields
{\allowdisplaybreaks \begin{align*}
D_{3}(qz^{-4},qz^3,z^9;q^2)&=\frac{z^9(q^6;q^6)_{\infty}^3}{\Theta(q^2z^{-1};q^2)\Theta(z^9;q^{18})\Theta(q^3z^{-3};q^6)}
\Big [ 
\frac{1}{qz^{3}}\frac{\Theta(q^4;q^6)\Theta(q^3;q^{18})}{\Theta(qz^3;q^6)} \\
&\ \ \ \ \ -\frac{qz^{-4}}{q^2}\frac{\Theta(q^6;q^6)\Theta(q^{9};q^{18})}{\Theta(q^3z^3;q^6)}
+\frac{q^3z^{-5}}{q^2}\frac{\Theta(q^8;q^6)\Theta(q^{15};q^{18})}{\Theta(q^5z^3;q^6)}\Big ]\\
&=\frac{z^9(q^6;q^6)_{\infty}^3}{\Theta(q^2z^{-1};q^2)\Theta(z^9;q^{18})\Theta(q^3z^{-3};q^6)}\\
& \ \ \ \ \ \cdot \Big [ 
q^{-1}z^{-3}\frac{\Theta(q^4;q^6)\Theta(q^3;q^{18})}{\Theta(qz^3;q^6)} 
+qz^{-5}\frac{\Theta(q^8;q^6)\Theta(q^{15};q^{18})}{\Theta(q^5z^3;q^6)}\Big ].
\end{align*}}%
Using (\ref{equation:j-flip}) and then (\ref{equation:j-elliptic}) yields
{\allowdisplaybreaks \begin{align*}
D_{3}(qz^{-4},qz^3,z^9;q^2)
&=\frac{z^9(q^6;q^6)_{\infty}^3}
{\Theta(z;q^2)\Theta(z^9;q^{18})\Theta(q^3z^{3};q^6)}\\
& \ \ \ \ \ \cdot \Big [ 
q^{-1}z^{-3}\frac{\Theta(q^2;q^6)\Theta(q^3;q^{18})}{\Theta(qz^3;q^6)} 
+qz^{-5}\frac{\Theta(q^8;q^6)\Theta(q^{3};q^{18})}{\Theta(q^5z^3;q^6)}\Big ]\\
&=\frac{z^9(q^6;q^6)_{\infty}^3\Theta(q^{3};q^{18})}
{\Theta(z;q^2)\Theta(z^9;q^{18})\Theta(q^3z^{3};q^6)}
 \cdot \Big [ 
q^{-1}z^{-3}\frac{\Theta(q^2;q^6)}{\Theta(qz^3;q^6)} 
+qz^{-5}\frac{\Theta(q^8;q^6)}{\Theta(q^5z^3;q^6)}\Big ]\\
&=\frac{q^{-1}z^{6}(q^6;q^6)_{\infty}^3\Theta(q^2;q^6)\Theta(q^{3};q^{18})}
{\Theta(z;q^2)\Theta(z^9;q^{18})\Theta(q^3z^{3};q^6)}
 \cdot \Big [ 
\frac{1}{\Theta(qz^3;q^6)} 
-z^{-2}\frac{1}{\Theta(q^5z^3;q^6)}\Big ]\\
&=\frac{q^{-1}z^{6}(q^6;q^6)_{\infty}^3\Theta(q^2;q^6)\Theta(q^{3};q^{18})}
{\Theta(z;q^2)\Theta(z^9;q^{18})\Theta(q^3z^{3};q^6)}
 \cdot 
 \frac{\Theta(q^5z^3;q^6)-z^{-2}\Theta(qz^3;q^6)}{\Theta(qz^3;q^6)\Theta(q^5z^3;q^6)}.
\end{align*}}%
We recall the quintuple product identity and make the substitutions $q\to q^2$, $ x\to qx$.  This allows us to write
\begin{align*}
D_{3}(qz^{-4},qz^3,z^9;q^2)
&=\frac{q^{-1}z^{6}(q^6;q^6)_{\infty}^3\Theta(q^2;q^6)\Theta(q^{3};q^{18})}
{\Theta(z;q^2)\Theta(z^9;q^{18})\Theta(q^3z^{3};q^6)\Theta(qz^3;q^6)\Theta(q^5z^3;q^6)}\\
&\qquad  \cdot  \frac{(q^2;q^2)_{\infty}\Theta(q^2z^2;q^2)}{\Theta(qz;q^2)}\\
&=\frac{q^{-1}z^{6}(q^6;q^6)_{\infty}^3(q^2;q^2)_{\infty}^2\Theta(q^{3};q^{18})}
{\Theta(z;q^2)\Theta(z^9;q^{18})\Theta(q^3z^{3};q^6)\Theta(qz^3;q^6)\Theta(q^5z^3;q^6)}
 \cdot  \frac{\Theta(q^2z^2;q^2)}{\Theta(qz;q^2)}.
\end{align*}
Using (\ref{equation:j-mod}) with $n=3$ allows us to write
{\allowdisplaybreaks \begin{align*}
D_{3}(qz^{-4},qz^3,z^9;q^2)
&=\frac{q^{-1}z^{6}(q^6;q^6)_{\infty}^3(q^2;q^2)_{\infty}^2\Theta(q^{3};q^{18})}
{\Theta(z;q^2)\Theta(z^9;q^{18})}\cdot \frac{(q^2;q^2)_{\infty}}{\Theta(qz^3;q^2)(q^6;q^6)_{\infty}^3}
 \cdot  \frac{\Theta(q^2z^2;q^2)}{\Theta(qz;q^2)}\\
&=\frac{q^{-1}z^{6}(q^2;q^2)_{\infty}^3\Theta(q^{3};q^{18})}
{\Theta(z;q^2)\Theta(z^9;q^{18})}\cdot \frac{1}{\Theta(qz^3;q^2)}
 \cdot  \frac{\Theta(q^2z^2;q^2)}{\Theta(qz;q^2)}.
\end{align*}}%
Again using (\ref{equation:j-mod}) but with $n=2$ brings us to
{\allowdisplaybreaks \begin{align*}
D_{3}(qz^{-4},qz^3,z^9;q^2)
&=\frac{q^{-1}z^{6}(q^2;q^2)_{\infty}^3\Theta(q^{3};q^{18})}
{\Theta(z^9;q^{18})}\cdot \frac{\Theta(q^2z^2;q^2) }{\Theta(qz^3;q^2)}
 \cdot \frac{(q;q)_{\infty}}{\Theta(z;q)(q^2;q^2)_{\infty}^2}\\
&=\frac{q^{-1}z^{6}(q;q)_{\infty}(q^2;q^2)_{\infty}\Theta(q^{3};q^{18})}
{\Theta(z^9;q^{18})}\cdot \frac{\Theta(q^2z^2;q^2) }{\Theta(qz^3;q^2)}
 \cdot \frac{1}{\Theta(z;q)}.
\end{align*}}%
Using (\ref{equation:j-elliptic}) and then (\ref{equation:j-roots}) yields
\begin{align*}
D_{3}(qz^{-4},qz^3,z^9;q^2)
&=-\frac{q^{-1}z^{4}(q;q)_{\infty}(q^2;q^2)_{\infty}\Theta(q^{3};q^{18})}
{\Theta(z^9;q^{18})}\cdot \frac{\Theta(z^2;q^2) }{\Theta(qz^3;q^2)}
 \cdot \frac{1}{\Theta(z;q)}\\
&=-\frac{q^{-1}z^{4}(q;q)_{\infty}(q^2;q^2)_{\infty}\Theta(q^{3};q^{18})}
{\Theta(z^9;q^{18})\Theta(qz^3;q^2)\Theta(z;q)}\cdot \frac{\Theta(z;q)\Theta(-z;q)(q^2;q^2)_{\infty}}{(q;q)_{\infty}^2}\\
&=-\frac{q^{-1}z^{4}(q^2;q^2)_{\infty}^2\Theta(q^{3};q^{18})\Theta(-z;q)}
{(q;q)_{\infty}\Theta(z^9;q^{18})\Theta(qz^3;q^2)}.
\end{align*}

\subsection{Proof of (\ref{equation:idN3-6})}
We specialize (\ref{equation:msplit3}) using the first term in (\ref{equation:idN3-6}).  This gives us
\begin{align*}
D_{3}(x,q,q^3x^{-3};q^2)&=\frac{q^3x^{-3}(q^6;q^6)_{\infty}^3}{\Theta(xq;q^2)\Theta(q^3x^{-3};q^{18})\Theta(q^3;q^6)}
\Big [ 
\frac{1}{q}\frac{\Theta(q^4;q^6)\Theta(x^3;q^{18})}{\Theta(q;q^6)} \\
&\ \ \ \ \ -\frac{x}{q^2}\frac{\Theta(q^6;q^6)\Theta(q^{6}x^3;q^{18})}{\Theta(q^3;q^6)}
+\frac{x^2}{q}\frac{\Theta(q^{8};q^6)\Theta(q^{12}x^3;q^{18})}{\Theta(q^5;q^6)}\Big ]\\
&=\frac{q^3x^{-3}(q^6;q^6)_{\infty}^3}{\Theta(xq;q^2)\Theta(q^3x^{-3};q^{18})\Theta(q^3;q^6)}
\Big [ 
\frac{1}{q}\frac{\Theta(q^4;q^6)\Theta(x^3;q^{18})}{\Theta(q;q^6)} \\
&\ \ \ \ \ +\frac{x^2}{q}\frac{\Theta(q^{8};q^6)\Theta(q^{12}x^3;q^{18})}{\Theta(q^5;q^6)}\Big ].
\end{align*}
Using (\ref{equation:j-elliptic}), (\ref{equation:j-flip}), and (\ref{equation:j-elliptic}) again yields
{\allowdisplaybreaks \begin{align*}
D_{3}&(x,q,q^3x^{-3};q^2)\\
&=\frac{q^3x^{-3}(q^6;q^6)_{\infty}^3}{\Theta(xq;q^2)\Theta(q^3x^{-3};q^{18})\Theta(q^3;q^6)}
\Big [ 
\frac{1}{q}\frac{\Theta(q^4;q^6)\Theta(x^3;q^{18})}{\Theta(q;q^6)} 
 -q^{-3}x^2\frac{\Theta(q^{2};q^6)\Theta(q^{12}x^3;q^{18})}{\Theta(q^5;q^6)}\Big ]\\
&=\frac{q^3x^{-3}(q^6;q^6)_{\infty}^3}{\Theta(xq;q^2)\Theta(q^3x^{-3};q^{18})\Theta(q^3;q^6)}
\frac{\Theta(q^2;q^6)}{\Theta(q;q^6)}
 \cdot \Big [ 
q^{-1}\Theta(x^3;q^{18})
 -q^{-3}x^2\Theta(q^{12}x^3;q^{18})\Big ]\\
&= -\frac{x^{-1}(q^6;q^6)_{\infty}^3}{\Theta(xq;q^2)\Theta(q^3x^{-3};q^{18})\Theta(q^3;q^6)}
\frac{\Theta(q^2;q^6)}{\Theta(q;q^6)}
 \cdot \Big [ 
q^{2}x\Theta(q^{18}x^3;q^{18})
 +\Theta(q^{12}x^3;q^{18})\Big ].
\end{align*}}%
Using the quintuple product identity (\ref{equation:H1Thm1.0}) gives us
 \begin{align*}
D_{3}(x,q,q^3x^{-3};q^2)
&=-\frac{x^{-1}(q^6;q^6)_{\infty}^3}{\Theta(xq;q^2)\Theta(q^3x^{-3};q^{18})\Theta(q^3;q^6)}
\cdot \frac{\Theta(q^2;q^6)}{\Theta(q;q^6)}
 \cdot \frac{(q^6;q^6)_{\infty}\Theta(q^4x^2;q^6)}{\Theta(q^2x;q^6)}\\
 &=-\frac{x^{-1}(q^6;q^6)_{\infty}^3(q^2;q^2)_{\infty}^2}{\Theta(xq;q^2)\Theta(q^3x^{-3};q^{18})}
\cdot \frac{1}{(q;q)_{\infty}(q^3;q^3)_{\infty}}
 \cdot \frac{\Theta(q^4x^2;q^6)}{\Theta(q^2x;q^6)},
\end{align*}
where in the last equality we have used elementary product rearrangements.  Finally we use (\ref{equation:j-mod}) to obtain
 \begin{align*}
D_{3}(x,q,q^3x^{-3};q^2)
&=-\frac{x^{-1}(q^6;q^6)_{\infty}^3(q^2;q^2)_{\infty}^2}{\Theta(xq;q^2)\Theta(q^3x^{-3};q^{18})}
 \cdot \frac{1}{(q;q)_{\infty}(q^3;q^3)_{\infty}}
 \cdot \frac{\Theta(q^4x^2;q^6)}{\Theta(q^2x;q^6)}\frac{\Theta(x;q^2)}{\Theta(x;q^2)}\\
&=-\frac{x^{-1}(q^6;q^6)_{\infty}^3\Theta(x;q^2)}{(q^3;q^3)_{\infty}\Theta(q^3x^{-3};q^{18})}
 \cdot \frac{\Theta(q^4x^2;q^6)}{\Theta(q^2x;q^6)}\frac{1}{\Theta(x;q)}.
\end{align*}

\subsection{Proof of (\ref{equation:idN3-7})}
We specialize (\ref{equation:msplit3}) using the first term in (\ref{equation:idN3-7}).  This gives us
{\allowdisplaybreaks \begin{align*}
D_{3}(qz^{-2},z,z^3;q^2)&=\frac{z^3(q^6;q^6)_{\infty}^3}{\Theta(qz^{-1};q^2)\Theta(z^3;q^{18})\Theta(q^3z^{-3};q^6)}
\Big [ 
z^{-1}\frac{\Theta(q^3z^{-2};q^6)\Theta(1;q^{18})}{\Theta(z;q^6)} \\
&\ \ \ \ \ -z^{-2}q^{-1}\frac{\Theta(q^{5}z^{-2};q^6)\Theta(q^{6};q^{18})}{\Theta(q^2z;q^6)}
+z^{-3}\frac{\Theta(q^{7}z^{-2};q^6)\Theta(q^{12};q^{18})}{\Theta(q^4z;q^6)}\Big ]\\
&=\frac{z^3(q^6;q^6)_{\infty}^3}{\Theta(qz^{-1};q^2)\Theta(z^3;q^{18})\Theta(q^3z^{-3};q^6)}\\
& \ \ \ \ \ \cdot  \Big [ 
-z^{-2}q^{-1}\frac{\Theta(q^{5}z^{-2};q^6)\Theta(q^{6};q^{18})}{\Theta(q^2z;q^6)}
+z^{-3}\frac{\Theta(q^{7}z^{-2};q^6)\Theta(q^{12};q^{18})}{\Theta(q^4z;q^6)}\Big ].
\end{align*}}%
Using (\ref{equation:j-flip}) gives us
{\allowdisplaybreaks \begin{align*}
D_{3}(qz^{-2},z,z^3;q^2)&=\frac{z^3(q^6;q^6)_{\infty}^4}{\Theta(qz^{-1};q^2)\Theta(z^3;q^{18})\Theta(q^3z^{-3};q^6)}\\
& \ \ \ \ \ \cdot  \Big [ 
-z^{-2}q^{-1}\frac{\Theta(q^{5}z^{-2};q^6)}{\Theta(q^2z;q^6)}
+z^{-3}\frac{\Theta(q^{7}z^{-2};q^6)}{\Theta(q^4z;q^6)}\Big ]\\
&=\frac{z^3(q^6;q^6)_{\infty}^4}{\Theta(qz;q^2)\Theta(z^3;q^{18})\Theta(q^3z^{3};q^6)}\\
& \ \ \ \ \ \cdot  \Big [ 
-z^{-2}q^{-1}\frac{\Theta(qz^{2};q^6)}{\Theta(q^2z;q^6)}
+z^{-3}\frac{\Theta(q^{7}z^{-2};q^6)}{\Theta(q^4z;q^6)}\Big ].
\end{align*}}%
Using (\ref{equation:j-elliptic}) and then (\ref{equation:j-flip}) yields
{\allowdisplaybreaks \begin{align*}
D_{3}(qz^{-2},z,z^3;q^2)&=\frac{z^3(q^6;q^6)_{\infty}^4}{\Theta(qz;q^2)\Theta(z^3;q^{18})\Theta(q^3z^{3};q^6)}\\
& \ \ \ \ \ \cdot  \Big [ 
-z^{-2}q^{-1}\frac{\Theta(qz^{2};q^6)}{\Theta(q^2z;q^6)}
-q^{-1}z^{-1}\frac{\Theta(qz^{-2};q^6)}{\Theta(q^4z;q^6)}\Big ]\\
&=\frac{z^3(q^6;q^6)_{\infty}^4}{\Theta(qz;q^2)\Theta(z^3;q^{18})\Theta(q^3z^{3};q^6)}\\
& \ \ \ \ \ \cdot  \Big [ 
-z^{-2}q^{-1}\frac{\Theta(qz^{2};q^6)}{\Theta(q^2z;q^6)}
-q^{-1}z^{-1}\frac{\Theta(q^5z^{2};q^6)}{\Theta(q^4z;q^6)}\Big ]\\
&=-\frac{q^{-1}z(q^6;q^6)_{\infty}^4}{\Theta(qz;q^2)\Theta(z^3;q^{18})\Theta(q^3z^{3};q^6)}
 \cdot  \Big [ 
\frac{\Theta(qz^{2};q^6)}{\Theta(q^2z;q^6)}
+z\frac{\Theta(q^5z^{2};q^6)}{\Theta(q^4z;q^6)}\Big ].
\end{align*}}%
Combining fractions yields
\begin{align*}
D_{3}(qz^{-2},z,z^3;q^2)
&=-\frac{q^{-1}z(q^6;q^6)_{\infty}^4}{\Theta(qz;q^2)\Theta(z^3;q^{18})\Theta(q^3z^{3};q^6)}\\
& \ \ \ \ \  \cdot  \Big [ 
\frac{\Theta(qz^{2};q^6)\Theta(q^4z;q^6)
+z\Theta(q^5z^{2};q^6)\Theta(q^2z;q^6)}{\Theta(q^2z;q^6)\Theta(q^4z;q^6)}\Big ].
\end{align*}
Using (\ref{equation:WR-cor1}) gives
{\allowdisplaybreaks \begin{align*}
D_{3}(qz^{-2},z,z^3;q^2)
&=-\frac{q^{-1}z(q^6;q^6)_{\infty}^4}{\Theta(qz;q^2)\Theta(z^3;q^{18})\Theta(q^3z^{3};q^6)}\\
& \ \ \ \ \  \cdot  
\frac{1}{\Theta(q^2z;q^6)\Theta(q^4z;q^6)}\cdot \frac{\Theta(zq;q^6)\Theta(q/z;q^6)\Theta(z^2;q^6)}{\Theta(z;q^6)}\cdot \frac{\Theta(q^2;q^6)}{\Theta(q;q^6)}.
\end{align*}}%
Using (\ref{equation:j-mod}) with $n=3$ gives
{\allowdisplaybreaks \begin{align*}
D_{3}(qz^{-2},z,z^3;q^2)
&=-\frac{q^{-1}z(q^6;q^6)_{\infty}^4}{\Theta(qz;q^2)\Theta(z^3;q^{18})\Theta(q^3z^{3};q^6)}\\
& \ \ \ \ \  \cdot  
 \Theta(zq;q^6)\Theta(q/z;q^6)\Theta(z^2;q^6)\cdot \frac{\Theta(q^2;q^6)}{\Theta(q;q^6)}
 \frac{(q^2;q^2)_{\infty}}{\Theta(z;q^2)(q^6;q^6)_{\infty}^3}\\
 &=-\frac{q^{-1}z(q^6;q^6)_{\infty}}{\Theta(qz;q^2)\Theta(z^3;q^{18})\Theta(q^3z^{3};q^6)}\\
& \ \ \ \ \  \cdot  
 \Theta(zq;q^6)\Theta(q/z;q^6)\Theta(z^2;q^6)\cdot \frac{1}{\Theta(q;q^6)}\cdot 
 \frac{(q^2;q^2)_{\infty}^2}{\Theta(z;q^2)}.
\end{align*}}%
Using (\ref{equation:j-mod}) with $n=2$ yields
\begin{align*}
D_{3}(qz^{-2},z,z^3;q^2)
&=-\frac{q^{-1}z(q^6;q^6)_{\infty}}{\Theta(z^3;q^{18})\Theta(q^3z^{3};q^6)}\\
& \ \ \ \ \  \cdot  
 \Theta(zq;q^6)\Theta(q/z;q^6)\Theta(z^2;q^6)\cdot \frac{ (q^2;q^2)_{\infty}^2}{\Theta(q;q^6)}
 \cdot \frac{(q;q)_{\infty}}{\Theta(z;q)(q^2;q^2)_{\infty}^2}\\
 &=-\frac{q^{-1}z(q;q)_{\infty}(q^6;q^6)_{\infty} \Theta(zq;q^6)\Theta(q/z;q^6)\Theta(z^2;q^6)}
 {\Theta(z^3;q^{18})\Theta(q^3z^{3};q^6)\Theta(z;q)}
 \cdot \frac{ 1}{\Theta(q;q^6)}.
\end{align*}
Using (\ref{equation:j-flip}) gives
\begin{align*}
D_{3}(qz^{-2},z,z^3;q^2)
&=-\frac{q^{-1}z(q;q)_{\infty}(q^6;q^6)_{\infty} \Theta(zq;q^6)\Theta(q^5z;q^6)\Theta(z^2;q^6)}
 {\Theta(z^3;q^{18})\Theta(q^3z^{3};q^6)\Theta(z;q)}
 \cdot \frac{ 1}{\Theta(q;q^6)}.
\end{align*}
Another use of (\ref{equation:j-mod}) with $n=3$ gives
{\allowdisplaybreaks \begin{align*}
D_{3}(qz^{-2},z,z^3;q^2)
&=-\frac{q^{-1}z(q;q)_{\infty}(q^6;q^6)_{\infty} \Theta(zq;q^6)\Theta(q^5z;q^6)\Theta(z^2;q^6)}
 {\Theta(z^3;q^{18})\Theta(q^3z^{3};q^6)\Theta(z;q)}\\
&\qquad  \cdot \frac{ 1}{\Theta(q;q^6)}\cdot \frac{\Theta(q^3z;q^6)}{\Theta(q^3z;q^6)}\\
&=-\frac{q^{-1}z(q;q)_{\infty}(q^6;q^6)_{\infty} \Theta(z^2;q^6)}
 {\Theta(z^3;q^{18})\Theta(q^3z^{3};q^6)\Theta(z;q)\Theta(q;q^6)\Theta(q^3z;q^6)}
 \cdot \frac{\Theta(qz;q^2)(q^6;q^6)_{\infty}^3}{(q^2;q^2)_{\infty}}\\
&=-\frac{q^{-1}z(q;q)_{\infty}(q^6;q^6)_{\infty}^4 \Theta(z^2;q^6)}
 {\Theta(z^3;q^{18})\Theta(q^3z^{3};q^6)\Theta(z;q)\Theta(q;q^6)\Theta(q^3z;q^6)}
 \cdot \frac{\Theta(qz;q^2)}{(q^2;q^2)_{\infty}}\\
&=-\frac{q^{-1}z(q^3;q^3)_{\infty}(q^6;q^6)_{\infty}^2 \Theta(z^2;q^6)\Theta(qz;q^2)}
 {\Theta(z^3;q^{18})\Theta(q^3z^{3};q^6)\Theta(z;q)\Theta(q^3z;q^6)},
\end{align*}}%
where the last line follows from product rearrangements.

\subsection{Proof of (\ref{equation:idN3-8})}
We specialize (\ref{equation:msplit3}) using the first term in (\ref{equation:idN3-8}).  This gives us
{\allowdisplaybreaks \begin{align*}
D_{3}(x,q,q^9;q^2)
&=\frac{q^9(q^6;q^6)_{\infty}^3}{\Theta(xq;q^2)\Theta(q^9;q^{18})\Theta(x^3q^9;q^6)}
\Big [ 
\frac{1}{q}\frac{\Theta(x^3q^{10};q^6)\Theta(q^{-6};q^{18})}{\Theta(q;q^6)}\\
&\ \ \ \ \ -\frac{x}{q^2}\frac{\Theta(x^3q^{12};q^6)\Theta(q^{6}q^{-6};q^{18})}{\Theta(q^3;q^6)}
+\frac{x^2q}{q^2}\frac{\Theta(q^{14}x^3;q^6)\Theta(q^{6};q^{18})}{\Theta(q^5;q^6)}\Big ]\\
&=\frac{q^9(q^6;q^6)_{\infty}^3}{\Theta(xq;q^2)\Theta(q^9;q^{18})\Theta(x^3q^9;q^6)}
\Big [ 
\frac{1}{q}\frac{\Theta(x^3q^{10};q^6)\Theta(q^{-6};q^{18})}{\Theta(q;q^6)}\\
&\ \ \ \ \ 
+\frac{x^2q}{q^2}\frac{\Theta(q^{14}x^3;q^6)\Theta(q^{6};q^{18})}{\Theta(q^5;q^6)}\Big ].
\end{align*}}%
Using (\ref{equation:j-flip}) and (\ref{equation:j-elliptic}) yields
{\allowdisplaybreaks \begin{align*}
D_{3}(x,q,q^9;q^2)
&=\frac{q^9(q^6;q^6)_{\infty}^3}{\Theta(xq;q^2)\Theta(q^9;q^{18})\Theta(x^3q^9;q^6)}
\Big [ 
-q^{-7}\frac{\Theta(x^3q^{10};q^6)\Theta(q^{6};q^{18})}{\Theta(q;q^6)}\\
&\ \ \ \ \ 
+\frac{x^2q}{q^2}\frac{\Theta(q^{14}x^3;q^6)\Theta(q^{6};q^{18})}{\Theta(q;q^6)}\Big ]\\
&=\frac{q^9(q^6;q^6)_{\infty}^3}{\Theta(xq;q^2)\Theta(q^9;q^{18})\Theta(x^3q^9;q^6)}
\frac{\Theta(q^{6};q^{18})}{\Theta(q;q^6)}\\
& \ \ \ \ \ \cdot \Big [ 
-q^{-7}\Theta(x^3q^{10};q^6) +x^2q^{-1}\Theta(q^{14}x^3;q^6)\Big ],
\end{align*}}%
where we have pulled out a common factor.  We again employ (\ref{equation:j-elliptic}), we rewrite three of theta functions to obtain
{\allowdisplaybreaks \begin{align*}
D_{3}(x,q,q^9;q^2)
&=\frac{q^9(q^6;q^6)_{\infty}^3}{\Theta(xq;q^2)\Theta(q^9;q^{18})(-x^{-3}q^{-3})\Theta(x^3q^3;q^6)}
\frac{\Theta(q^{6};q^{18})}{\Theta(q;q^6)}\\
& \ \ \ \ \ \cdot \Big [ 
q^{-7}q^{-4}x^{-3}\Theta(x^3q^{4};q^6) +x^2q^{-1}x^{-6}q^{-10}\Theta(q^{2}x^3;q^6)\Big ]\\
&=-\frac{qx^{-1}(q^6;q^6)_{\infty}^3}{\Theta(xq;q^2)\Theta(q^9;q^{18})\Theta(x^3q^3;q^6)}
\frac{\Theta(q^{6};q^{18})}{\Theta(q;q^6)}\\
& \ \ \ \ \ \cdot \Big [ 
x\Theta(x^3q^{4};q^6) +\Theta(q^{2}x^3;q^6)\Big ].
\end{align*}}%
Using the quintuple product identity (\ref{equation:H1Thm1.0}) results in
\begin{equation*}
D_{3}(x,q,q^9;q^2)
=\frac{qx^{-1}(q^6;q^6)_{\infty}^3}{\Theta(xq;q^2)\Theta(q^9;q^{18})\Theta(x^3q^3;q^6)}
\frac{\Theta(q^{6};q^{18})}{\Theta(q;q^6)}
\frac{(q^2;q^2)_{\infty}\Theta(x^2;q^2)}{\Theta(x;q^2)}.
\end{equation*}
Using (\ref{equation:j-mod}), we have
\begin{align*}
D_{3}(x,q,q^9;q^2)
&=-\frac{qx^{-1}(q^6;q^6)_{\infty}^3}{\Theta(x;q)\Theta(q^9;q^{18})\Theta(x^3q^3;q^6)}
\frac{\Theta(q^{6};q^{18})}{\Theta(q;q^6)}
\Theta(x^2;q^2)\frac{(q;q)_{\infty}}{(q^2;q^2)_{\infty}}\\
&=-\frac{qx^{-1}(q^6;q^6)_{\infty}^4}{\Theta(x;q)\Theta(q^9;q^{18})\Theta(x^3q^3;q^6)}
\frac{(q^2;q^2)_{\infty}(q^3;q^3)_{\infty}}{(q;q)_{\infty}(q^6;q^6)_{\infty}^2}
\Theta(x^2;q^2)\frac{(q;q)_{\infty}}{(q^2;q^2)_{\infty}}\\
&=-\frac{qx^{-1}(q^6;q^6)_{\infty}^2(q^3;q^3)_{\infty}}{\Theta(x;q)\Theta(q^9;q^{18})\Theta(x^3q^3;q^6)}
\Theta(x^2;q^2),
\end{align*}
where where we have used product rearrangements and simplified.  Again using (\ref{equation:j-mod}), we have
{\allowdisplaybreaks \begin{align*}
D_{3}(x,q,q^9;q^2)
&=-\frac{qx^{-1}(q^6;q^6)_{\infty}^2(q^3;q^3)_{\infty}}{\Theta(x;q)\Theta(q^9;q^{18})\Theta(x^3q^3;q^6)}
\Theta(x^2;q^2)\frac{\Theta(qx^2;q^2)}{\Theta(qx^2;q^2)}\\
&=-\frac{qx^{-1}(q^6;q^6)_{\infty}^2(q^3;q^3)_{\infty}}{\Theta(x;q)\Theta(q^9;q^{18})\Theta(x^3q^3;q^6)}
\frac{\Theta(x^2;q)}{\Theta(qx^2;q^2)}\frac{(q^2;q^2)_{\infty}^2}{(q;q)_{\infty}}\\
&=-\frac{qx^{-1}\Theta(-q;q^4)(q^6;q^6)_{\infty}^2(q^3;q^3)_{\infty}\Theta(x^2;q)}
{\Theta(x;q)\Theta(q^9;q^{18})\Theta(x^3q^3;q^6)\Theta(qx^2;q^2)},
\end{align*}}%
where we have again used product rearrangements.

\subsection{Proof of (\ref{equation:idN3-9})}
We specialize (\ref{equation:msplit3}) using the second term in (\ref{equation:idN3-9}).  This gives us
{\allowdisplaybreaks \begin{align*}
D_{3}(qz^{-3},qz^2,q^9z^6;q^2)
&=\frac{q^9z^6(q^6;q^6)_{\infty}^3}
{\Theta(q^2z^{-1};q^2)\Theta(q^9z^6;q^{18})\Theta(q^{12}z^{-3};q^6)}
\Big [ 
\frac{1}{qz^2}\frac{\Theta(q^{13}z^{-1};q^6)\Theta(q^{-6};q^{18})}{\Theta(qz^2;q^6)} \\
&\ \ \ \ \ -\frac{qz^{-3}}{q^2}\frac{\Theta(q^{15}z^{-1};q^6)\Theta(1;q^{18})}{\Theta(q^3z^2;q^6)}
+\frac{q^3z^{-4}}{q^2}\frac{\Theta(q^{17}z^{-1};q^6)\Theta(q^{6};q^{18})}{\Theta(q^5z^2;q^6)}\Big ]\\
&=\frac{q^9z^6(q^6;q^6)_{\infty}^3}
{\Theta(q^2z^{-1};q^2)\Theta(q^9z^6;q^{18})\Theta(q^{12}z^{-3};q^6)}\\
& \ \ \ \ \ \cdot \Big [ 
q^{-1}z^{-2}\frac{\Theta(q^{13}z^{-1};q^6)\Theta(q^{-6};q^{18})}{\Theta(qz^2;q^6)} 
+qz^{-4}\frac{\Theta(q^{17}z^{-1};q^6)\Theta(q^{6};q^{18})}{\Theta(q^5z^2;q^6)}\Big ].
\end{align*}}%
Employing (\ref{equation:j-flip}) allows us to write
{\allowdisplaybreaks \begin{align*}
D_{3}(qz^{-3},qz^2,q^9z^6;q^2)
&=\frac{q^9z^6(q^6;q^6)_{\infty}^3}
{\Theta(z;q^2)\Theta(q^9z^6;q^{18})\Theta(q^{-6}z^{3};q^6)}\\
& \ \ \ \ \ \cdot \Big [ 
q^{-1}z^{-2}\frac{\Theta(q^{13}z^{-1};q^6)\Theta(q^{24};q^{18})}{\Theta(qz^2;q^6)} 
+qz^{-4}\frac{\Theta(q^{17}z^{-1};q^6)\Theta(q^{6};q^{18})}{\Theta(q^5z^2;q^6)}\Big ].
\end{align*}}%
Using (\ref{equation:j-elliptic}) gives us
{\allowdisplaybreaks \begin{align*}
D_{3}(qz^{-3},qz^2,q^9z^6;q^2)
&=\frac{q^{15}z^3(q^6;q^6)_{\infty}^3}
{\Theta(z;q^2)\Theta(q^9z^6;q^{18})\Theta(z^{3};q^6)}\\
& \qquad \cdot \Big [ 
-q^{-15}\frac{\Theta(qz^{-1};q^6)\Theta(q^{6};q^{18})}{\Theta(qz^2;q^6)} 
+q^{-15}z^{-2}\frac{\Theta(q^{5}z^{-1};q^6)\Theta(q^{6};q^{18})}{\Theta(q^5z^2;q^6)}\Big ]\\
&=-\frac{z(q^6;q^6)_{\infty}^4}
{\Theta(z;q^2)\Theta(q^9z^6;q^{18})\Theta(z^{3};q^6)}
 \cdot \Big [ 
\frac{-z^2\Theta(qz^{-1};q^6)}{\Theta(qz^2;q^6)} 
+\frac{\Theta(q^{5}z^{-1};q^6)}{\Theta(q^5z^2;q^6)}\Big ]\\
&=-\frac{z(q^6;q^6)_{\infty}^4}
{\Theta(z;q^2)\Theta(q^9z^6;q^{18})\Theta(z^{3};q^6)}\\
& \ \ \ \ \  \cdot \Big [ 
\frac{\Theta(qz^2;q^6)\Theta(qz;q^6)-z^2\Theta(q^5z^2;q^6)\Theta(q^5z;q^6)}{\Theta(qz^2;q^6)\Theta(q^5z^2;q^6)}\Big ].
\end{align*}}%
Using (\ref{equation:WR-cor2}) gives us
\begin{align*}
D_{3}(qz^{-3},qz^2,q^9z^6;q^2)
&=-\frac{z(q^6;q^6)_{\infty}^4}
{\Theta(z;q^2)\Theta(q^9z^6;q^{18})\Theta(z^{3};q^6)}\\
& \ \ \ \ \  \cdot 
\frac{1}{\Theta(qz^2;q^6)\Theta(q^5z^2;q^6)}
\cdot \frac{\Theta(zq^2;q^6)\Theta(q^2/z;q^6)\Theta(z^2;q^6)}{\Theta(zq^3;q^6)}\cdot \frac{\Theta(q^2;q^6)}{\Theta(q;q^6)}.
\end{align*}
We use (\ref{equation:j-flip}) and the (\ref{equation:j-mod}) with $n=3$ and $n=2$ to get
{\allowdisplaybreaks \begin{align*}
D_{3}(qz^{-3},qz^2,q^9z^6;q^2)
&=-\frac{z(q^6;q^6)_{\infty}^4}
{\Theta(z;q^2)\Theta(q^9z^6;q^{18})\Theta(z^{3};q^6)}\cdot 
\frac{1}{\Theta(qz^2;q^6)\Theta(q^5z^2;q^6)}\cdot \frac{\Theta(q^3z^2;q^6)}{\Theta(q^3z^2;q^6)}\\
& \qquad 
\cdot \frac{\Theta(zq^2;q^6)\Theta(q^4z;q^6)\Theta(z^2;q^6)}{\Theta(zq^3;q^6)}\cdot \frac{\Theta(q^2;q^6)}{\Theta(q;q^6)}\\
&=-\frac{z(q^6;q^6)_{\infty}}
{\Theta(z;q^2)\Theta(q^9z^6;q^{18})\Theta(z^{3};q^6)}\cdot 
\frac{(q^2;q^2)_{\infty}^2}{\Theta(qz^2;q^2)}\\
& \qquad 
\cdot \frac{\Theta(zq^2;q^6)\Theta(q^4z;q^6)\Theta(z^2;q^6)}{\Theta(zq^3;q^6)}
\cdot \frac{ \Theta(q^3z^2;q^6)}{\Theta(q;q^6)}\\
&=-\frac{z(q^6;q^6)_{\infty}}
{\Theta(z;q^2)\Theta(q^9z^6;q^{18})\Theta(z^{3};q^6)}\cdot 
\frac{(q^2;q^2)_{\infty}^2}{\Theta(qz^2;q^2)}\\
& \ \ \ \ \  
\cdot \frac{\Theta(zq^2;q^6)\Theta(q^4z;q^6)}{\Theta(zq^3;q^6)}\cdot \frac{1}{\Theta(q;q^6)}
\cdot \frac{\Theta(z^2;q^3)(q^6;q^6)_{\infty}^2}{(q^3;q^3)_{\infty}}.
\end{align*}}%
We then use (\ref{equation:j-mod}) with $n=3$ and then simplify to obtain
{\allowdisplaybreaks \begin{align*}
D_{3}(qz^{-3},qz^2,q^9z^6;q^2)
&=-\frac{z(q^6;q^6)_{\infty}^3}
{\Theta(z;q^2)\Theta(q^9z^6;q^{18})\Theta(z^{3};q^6)}\cdot 
\frac{(q^2;q^2)_{\infty}^2}{\Theta(qz^2;q^2)}\\
& \ \ \ \ \  
\cdot \frac{\Theta(zq^2;q^6)\Theta(q^4z;q^6)}{\Theta(zq^3;q^6)}\cdot \frac{\Theta(z;q^6)}{\Theta(z;q^6)}
\cdot \frac{1}{\Theta(q;q^6)}
\cdot \frac{\Theta(z^2;q^3)}{(q^3;q^3)_{\infty}}\\
&=-\frac{z(q^6;q^6)_{\infty}^3}
{\Theta(z;q^2)\Theta(q^9z^6;q^{18})\Theta(z^{3};q^6)}\cdot 
\frac{(q^2;q^2)_{\infty}^2}{\Theta(qz^2;q^2)}\\
& \ \ \ \ \  
\cdot \frac{1}{\Theta(zq^3;q^6)\Theta(z;q^6)}\cdot \frac{\Theta(z;q^2)(q^6;q^6)_{\infty}^3}{(q^2;q^2)_{\infty}}
\cdot \frac{1}{\Theta(q;q^6)}
\cdot \frac{\Theta(z^2;q^3)}{(q^3;q^3)_{\infty}}\\
&=-\frac{z(q^6;q^6)_{\infty}^6}
{\Theta(q^9z^6;q^{18})\Theta(z^{3};q^6)}\cdot 
\frac{(q^2;q^2)_{\infty}}{\Theta(qz^2;q^2)}\\
& \ \ \ \ \  
\cdot \frac{1}{\Theta(zq^3;q^6)\Theta(z;q^6)}
\cdot \frac{1}{\Theta(q;q^6)}
\cdot \frac{\Theta(z^2;q^3)}{(q^3;q^3)_{\infty}}.
\end{align*}}%
Again using (\ref{equation:j-mod}) with $n=2$ yields
{\allowdisplaybreaks \begin{align*}
D_{3}(qz^{-3},qz^2,q^9z^6;q^2)
&=-\frac{z(q^6;q^6)_{\infty}^6}
{\Theta(q^9z^6;q^{18})\Theta(z^{3};q^6)}\cdot 
\frac{(q^2;q^2)_{\infty}}{\Theta(qz^2;q^2)}\\
& \ \ \ \ \  
\cdot \frac{(q^3;q^3)_{\infty}}{\Theta(z;q^3)(q^6;q^6)_{\infty}^2}
\cdot \frac{1}{\Theta(q;q^6)}
\cdot \frac{\Theta(z^2;q^3)}{(q^3;q^3)_{\infty}}\\
&=-\frac{z(q^6;q^6)_{\infty}^4}
{\Theta(q^9z^6;q^{18})\Theta(z^{3};q^6)}\cdot 
\frac{(q^2;q^2)_{\infty}\Theta(z^2;q^3)}{\Theta(qz^2;q^2)\Theta(z;q^3)\Theta(q;q^6)}\\
&=-\frac{z(q^6;q^6)_{\infty}^2}
{\Theta(q^9z^6;q^{18})\Theta(z^{3};q^6)}\cdot 
\frac{(q^2;q^2)_{\infty}^2\Theta(z^2;q^3)}{\Theta(qz^2;q^2)\Theta(z;q^3)}
\cdot\frac{(q^3;q^3)_{\infty}}{(q;q)_{\infty}}\\
&=-\frac{z(q^6;q^6)_{\infty}^2(q^3;q^3)_{\infty}}
{\Theta(q^9z^6;q^{18})\Theta(z^{3};q^6)}\cdot 
\frac{\Theta(-q;q^4)\Theta(z^2;q^3)}{\Theta(qz^2;q^2)\Theta(z;q^3)},
\end{align*}}%
where in the last two lines we have used elementary product rearrangements.

\subsection{Proof of (\ref{equation:idN3-10})}
We specialize (\ref{equation:msplit3}) using the first term in (\ref{equation:idN3-10}).  This gives us
{\allowdisplaybreaks \begin{align*}
D_{3}(qu^{-5},u^3,u^9;q^2)&=\frac{u^9(q^6;q^6)_{\infty}^3}{\Theta(qu^{-2};q^2)\Theta(u^9;q^{18})\Theta(q^3u^{-6};q^6)}
\Big [ 
\frac{1}{u^3}\frac{\Theta(q^{3}u^{-3};q^6)\Theta(1;q^{18})}{\Theta(u^3;q^6)} \\
&\ \ \ \ \ -\frac{qu^{-5}}{q^2}\frac{\Theta(q^5u^{-3};q^6)\Theta(q^{6};q^{18})}{\Theta(q^2u^3;q^6)}
+\frac{q^{2}u^{-7}}{q^2}\frac{\Theta(q^7u^{-3};q^6)\Theta(q^{12};q^{18})}{\Theta(q^4u^3;q^6)}\Big ]\\
&=\frac{u^9(q^6;q^6)_{\infty}^3}{\Theta(qu^{-2};q^2)\Theta(u^9;q^{18})\Theta(q^3u^{-6};q^6)}\\
& \ \ \ \ \ \cdot \Big [  -u^{-5}q^{-1}\frac{\Theta(q^5u^{-3};q^6)\Theta(q^{6};q^{18})}{\Theta(q^2u^3;q^6)}
+u^{-7}\frac{\Theta(q^7u^{-3};q^6)\Theta(q^{12};q^{18})}{\Theta(q^4u^3;q^6)}\Big ].
\end{align*}}%
Using (\ref{equation:j-flip}) and pulling out a common factor gives
{\allowdisplaybreaks \begin{align*}
D_{3}(qu^{-5},u^3,u^9;q^2)
&=\frac{u^9(q^6;q^6)_{\infty}^4}{\Theta(qu^{2};q^2)\Theta(u^9;q^{18})\Theta(q^3u^{6};q^6)}\\
& \ \ \ \ \ \cdot \Big [ 
 -u^{-5}q^{-1}\frac{\Theta(qu^{3};q^6)}{\Theta(q^2u^3;q^6)}
+u^{-7}\frac{\Theta(q^7u^{-3};q^6)}{\Theta(q^4u^3;q^6)}\Big ].
\end{align*}}%
We use that (\ref{equation:j-elliptic}) and (\ref{equation:j-flip}) gives
{\allowdisplaybreaks \begin{align*}
D_{3}(qu^{-5},u^3,u^9;q^2)
&=\frac{u^9(q^6;q^6)_{\infty}^4}{\Theta(qu^{2};q^2)\Theta(u^9;q^{18})\Theta(q^3u^{6};q^6)}\\
& \ \ \ \ \ \cdot \Big [ 
 -u^{-5}q^{-1}\frac{\Theta(qu^{3};q^6)}{\Theta(q^2u^3;q^6)}
-q^{-1}u^{-4}\frac{\Theta(qu^{-3};q^6)}{\Theta(q^4u^3;q^6)}\Big ]\\
&=\frac{u^9(q^6;q^6)_{\infty}^4}{\Theta(qu^{2};q^2)\Theta(u^9;q^{18})\Theta(q^3u^{6};q^6)}\\
& \ \ \ \ \ \cdot \Big [ 
-u^{-5}q^{-1}\frac{\Theta(qu^{3};q^6)}{\Theta(q^2u^3;q^6)}
-q^{-1}u^{-4}\frac{\Theta(q^5u^{3};q^6)}{\Theta(q^4u^3;q^6)}\Big ]\\
&=-\frac{q^{-1}u^4(q^6;q^6)_{\infty}^4}{\Theta(qu^{2};q^2)\Theta(u^9;q^{18})\Theta(q^3u^{6};q^6)}
\Big [  \frac{\Theta(qu^{3};q^6)}{\Theta(q^2u^3;q^6)}
+u\frac{\Theta(q^5u^{3};q^6)}{\Theta(q^4u^3;q^6)}\Big ].
\end{align*}}%
Combining fractions and using (\ref{equation:j-mod}) with $n=2$ gives
{\allowdisplaybreaks \begin{align*}
D_{3}(qu^{-5},u^3,u^9;q^2)
&=-\frac{q^{-1}u^4(q^6;q^6)_{\infty}^4}{\Theta(qu^{2};q^2)\Theta(u^9;q^{18})\Theta(q^3u^{6};q^6)}\\
& \ \ \ \ \ \cdot \Big [  \frac{\Theta(qu^{3};q^6)\Theta(q^4u^3;q^6)
+u\Theta(q^2u^{3};q^6)\Theta(q^5u^3;q^6)}{\Theta(q^2u^3;q^6)\Theta(q^4u^3;q^6)}\Big ]\\
&=-\frac{q^{-1}u^4(q^6;q^6)_{\infty}^4}{\Theta(qu^{2};q^2)\Theta(u^9;q^{18})\Theta(q^3u^{6};q^6)}\\
& \ \ \ \ \ \cdot \frac{(q^6;q^6)_{\infty}^2}{(q^3;q^3)_{\infty}}\Big [  \frac{\Theta(qu^{3};q^3)
+u\Theta(q^2u^{3};q^3)}{\Theta(q^2u^3;q^6)\Theta(q^4u^3;q^6)}\Big ]\\
&=-\frac{q^{-1}u^4(q^6;q^6)_{\infty}^6}
{(q^3;q^3)_{\infty}\Theta(qu^{2};q^2)\Theta(u^9;q^{18})\Theta(q^3u^{6};q^6)\Theta(q^2u^3;q^6)\Theta(q^4u^3;q^6)}\\
& \ \ \ \ \ \cdot \Big [  \Theta(qu^{3};q^3)
+u\Theta(q^2u^{3};q^3)\Big ].
\end{align*}}%
Using the quintuple product identity (\ref{equation:H1Thm1.0}) yields
{\allowdisplaybreaks \begin{align*}
D_{3}(qu^{-5},u^3,u^9;q^2)
&=-\frac{q^{-1}u^4(q^6;q^6)_{\infty}^6}
{(q^3;q^3)_{\infty}\Theta(qu^{2};q^2)\Theta(u^9;q^{18})\Theta(q^3u^{6};q^6)\Theta(q^2u^3;q^6)\Theta(q^4u^3;q^6)}\\
& \ \ \ \ \ \cdot \frac{(q;q)_{\infty}\Theta(u^2;q)}{\Theta(u;q)}.\qedhere
\end{align*}}%

\subsection{Proof of (\ref{equation:idN3-11})}
We specialize (\ref{equation:msplit3}) using the second term in (\ref{equation:idN3-11}).  This gives us
{\allowdisplaybreaks \begin{align*}
D_3(qz^{-4},qz^3,q^9z^9;q^2)
&=\frac{q^9z^9(q^6;q^6)_{\infty}^3}{\Theta(q^2z^{-1};q^2)\Theta(q^9z^9;q^{18})\Theta(q^{12}z^{-3};q^6)}
\Big [ 
\frac{1}{qz^3}\frac{\Theta(q^{13};q^6)\Theta(q^{-6};q^{18})}{\Theta(qz^3;q^6)}\\
&\ \ \ \ \ -\frac{qz^{-4}}{q^2}\frac{\Theta(q^{15};q^6)\Theta(1;q^{18})}{\Theta(q^3z^3;q^6)}
+\frac{x^2z}{q^2}\frac{\Theta(q^{17};q^6)\Theta(q^{6};q^{18})}{\Theta(q^5z^3;q^6)}\Big ]\\
&=\frac{q^9z^9(q^6;q^6)_{\infty}^3}{\Theta(q^2z^{-1};q^2)\Theta(q^9z^9;q^{18})\Theta(q^{12}z^{-3};q^6)}\\
& \ \ \ \ \ \cdot \Big [ 
q^{-1}z^{-3}\frac{\Theta(q^{13};q^6)\Theta(q^{-6};q^{18})}{\Theta(qz^3;q^6)}
+qz^{-5}\frac{\Theta(q^{17};q^6)\Theta(q^{6};q^{18})}{\Theta(q^5z^3;q^6)}\Big ].
\end{align*}}%
Using (\ref{equation:j-mod}) and (\ref{equation:j-elliptic}) gives
{\allowdisplaybreaks \begin{align*}
D_3(qz^{-4},qz^3,q^9z^9;q^2)
&=\frac{q^9z^9(q^6;q^6)_{\infty}^3}
{\Theta(z;q^2)\Theta(q^9z^9;q^{18})\Theta(q^{-6}z^{3};q^6)}\\
& \ \ \ \ \ \cdot \Big [ 
q^{-1}z^{-3}\frac{\Theta(q^{13};q^6)\Theta(q^{24};q^{18})}{\Theta(qz^3;q^6)}
+qz^{-5}\frac{\Theta(q^{17};q^6)\Theta(q^{6};q^{18})}{\Theta(q^5z^3;q^6)}\Big ]\\
&=-\frac{q^{15}z^6(q^6;q^6)_{\infty}^3}
{\Theta(z;q^2)\Theta(q^9z^9;q^{18})\Theta(z^{3};q^6)}\\
& \ \ \ \ \ \cdot \Big [ 
-q^{-15}z^{-3}\frac{\Theta(q;q^6)\Theta(q^{6};q^{18})}{\Theta(qz^3;q^6)}
+q^{-15}z^{-5}\frac{\Theta(q^{5};q^6)\Theta(q^{6};q^{18})}{\Theta(q^5z^3;q^6)}\Big ]\\
&=-\frac{z(q^6;q^6)_{\infty}^4}
{\Theta(z;q^2)\Theta(q^9z^9;q^{18})\Theta(z^{3};q^6)}
\cdot \Big [ 
-z^{2}\frac{\Theta(q;q^6)}{\Theta(qz^3;q^6)}
+\frac{\Theta(q^{5};q^6)}{\Theta(q^5z^3;q^6)}\Big ].
\end{align*}}%
Again using (\ref{equation:j-flip}) to pull out a common factor and then combining fractions yields
\begin{align*}
D_3(qz^{-4},qz^3,q^9z^9;q^2)
&=\frac{z^3(q^6;q^6)_{\infty}^4\Theta(q;q^6)}
{\Theta(z;q^2)\Theta(q^9z^9;q^{18})\Theta(z^{3};q^6)}
 \cdot 
\frac{\Theta(q^5z^3;q^6)-z^{-2}\Theta(qz^3;q^6)}
{\Theta(qz^3;q^6)\Theta(q^5z^3;q^6)}.
\end{align*}
We use the quintuple product identity with $q\to q^2$, $ x\to qx$ to obtain
\begin{align*}
D_3(qz^{-4},qz^3,q^9z^9;q^2)
&=\frac{z^3(q^6;q^6)_{\infty}^4\Theta(q;q^6)}
{\Theta(q^9z^9;q^{18})\Theta(z^{3};q^6)\Theta(qz^3;q^6)\Theta(q^5z^3;q^6)}\\
& \qquad   \cdot \frac{1}{\Theta(z;q^2)}\cdot 
\frac{(q^2;q^2)_{\infty}\Theta(q^2z^2;q^2)}{\Theta(qz;q^2)}.
\end{align*}
Using (\ref{equation:j-mod}) with $n=2$ and then (\ref{equation:j-elliptic}) yields
{\allowdisplaybreaks \begin{align*}
D_3(qz^{-4},qz^3,q^9z^9;q^2)
&=\frac{z^3(q^6;q^6)_{\infty}^4\Theta(q;q^6)}
{\Theta(q^9z^9;q^{18})\Theta(z^{3};q^6)\Theta(qz^3;q^6)\Theta(q^5z^3;q^6)}\\
& \ \ \ \ \  \cdot 
(q^2;q^2)_{\infty}\Theta(q^2z^2;q^2) \cdot \frac{(q;q)_{\infty}}{\Theta(z;q)(q^2;q^2)_{\infty}^2}\\
&=\frac{z^3(q^6;q^6)_{\infty}^4\Theta(q;q^6)}
{\Theta(q^9z^9;q^{18})\Theta(z^{3};q^6)\Theta(qz^3;q^6)\Theta(q^5z^3;q^6)}
  \cdot \frac{\Theta(q^2z^2;q^2)(q;q)_{\infty}}{\Theta(z;q)(q^2;q^2)_{\infty}}\\
&=-\frac{z(q^6;q^6)_{\infty}^4\Theta(q;q^6)}
{\Theta(q^9z^9;q^{18})\Theta(z^{3};q^6)\Theta(qz^3;q^6)\Theta(q^5z^3;q^6)}
  \cdot \frac{\Theta(z^2;q^2)(q;q)_{\infty}}{\Theta(z;q)(q^2;q^2)_{\infty}}.
\end{align*}}%
Again using (\ref{equation:j-mod}) with $n=2 $ and elementary product rearrangements brings us to
{\allowdisplaybreaks \begin{align*}
D_3&(qz^{-4},qz^3,q^9z^9;q^2)\\
&=-\frac{z(q^6;q^6)_{\infty}^4\Theta(q;q^6)}
{\Theta(q^9z^9;q^{18})\Theta(z^{3};q^6)\Theta(qz^3;q^6)\Theta(q^5z^3;q^6)}\\
& \ \ \ \ \   \cdot \frac{(q;q)_{\infty}}{\Theta(z;q)(q^2;q^2)_{\infty}}
\cdot \frac{1}{\Theta(qz^2;q^2)}
\cdot \frac{\Theta(z^2;q)(q^2;q^2)_{\infty}^2}{(q;q)_{\infty}}\\
&=-\frac{z(q;q)_{\infty}(q^6;q^6)_{\infty}^6\Theta(z^2;q)}
{(q^3;q^3)_{\infty}\Theta(z;q)\Theta(qz^2;q^2)\Theta(q^9z^9;q^{18})\Theta(z^{3};q^6)\Theta(qz^3;q^6)\Theta(q^5z^3;q^6)}.\qedhere
\end{align*}}%

\section{Proof of Theorem \ref{theorem:caseN4}}\label{section:N4-proofs}
 \subsection{Proof of (\ref{equation:idN4-1})}  The first equality in the equation follows from (\ref{equation:Dn-funcEqn1}).
We specialize (\ref{equation:msplit4}) using the second term in (\ref{equation:idN4-1}).  This gives us
{\allowdisplaybreaks \begin{align*}
D_4(-z^{-5},-z^4,z^{16};q)
&=\frac{z^{16} (q^4;q^4)_{\infty}^3}{\Theta(z^{-1};q) \Theta(z^{16};q^{16}) \Theta\big(-q^{6} z^{-4};q^4)} \Big [ 
\frac{\Theta\big(q^{6} ;q^4\big)
\Theta( 1;q^{16})}
{\Theta( -z^{4};q^4\big )}\\
& \qquad  -\frac{z^{-1}
\Theta\big(q^{7};q^4\big)
\Theta(q^{4} ;q^{16})}
{\Theta(-q z^{4};q^4\big )}
+\frac{q z^{-2}
\Theta\big(q^{8};q^4\big)
\Theta(q^{8} ;q^{16})}
{\Theta(-q^2 z^4;q^4\big )}\\
&\qquad -\frac{q^{3}z^{-3}
\Theta\big(q^{9};q^4\big)
\Theta(q^{12} ;q^{16})}
{\Theta(-q^3 z^{4};q^4\big )}\Big ]\\
&=\frac{z^{16} (q^4;q^4)_{\infty}^3}{\Theta(z^{-1};q) \Theta(z^{16};q^{16}) \Theta\big(-q^{6} z^{-4};q^4)} \\
& \qquad \cdot \Big [ 
 -\frac{z^{-1}
\Theta\big(q^{7};q^4\big)
\Theta(q^{4} ;q^{16})}
{\Theta(-q z^{4};q^4\big )}
 -\frac{q^{3}z^{-3}
\Theta\big(q^{9};q^4\big)
\Theta(q^{12} ;q^{16})}
{\Theta(-q^3 z^{4};q^4\big )}\Big ].
\end{align*}}%
Using (\ref{equation:j-elliptic}) and then (\ref{equation:j-flip}) yields
{\allowdisplaybreaks \begin{align*}
D_4(-z^{-5},-z^4,z^{16};q)
&=\frac{q^2z^{12} (q^4;q^4)_{\infty}^3}
{\Theta(z^{-1};q) \Theta(z^{16};q^{16}) \Theta\big(-q^{2} z^{-4};q^4)} \\
& \ \ \ \ \ \cdot \Big [ 
 \frac{z^{-1}q^{-3}\Theta\big(q^{3};q^4\big)
\Theta(q^{4} ;q^{16})}
{\Theta(-q z^{4};q^4\big )}
 -\frac{q^{-3}z^{-3}
\Theta\big(q;q^4\big)
\Theta(q^{12} ;q^{16})}
{\Theta(-q^3 z^{4};q^4\big )}\Big ]\\
&=\frac{q^2z^{12} (q^4;q^4)_{\infty}^3\Theta(q;q^4)\Theta(q^4;q^{16})}
{\Theta(qz;q) \Theta(z^{16};q^{16}) \Theta\big(-q^{2} z^{4};q^4)} 
\cdot \Big [ 
 \frac{z^{-1}q^{-3}}
{\Theta(-q z^{4};q^4\big )}
 -\frac{q^{-3}z^{-3}}
{\Theta(-q^3 z^{4};q^4\big )}\Big ]\\
&=-\frac{q^{-1}z^{9} (q^4;q^4)_{\infty}^3\Theta(q;q^4)\Theta(q^4;q^{16})}
{\Theta(qz;q) \Theta(z^{16};q^{16}) \Theta\big(-q^{2} z^{4};q^4)} 
\cdot 
 \frac{\Theta(-q z^{4};q^4\big )-z^2\Theta(-q^3 z^{4};q^4\big )}
{\Theta(-q z^{4};q^4\big )\Theta(-q^3 z^{4};q^4\big )}.
\end{align*}}%
Again using (\ref{equation:j-elliptic}) produces
\begin{align*}
D_4(-z^{-5},-z^4,z^{16};q)
&=\frac{q^{-1}z^{10} (q^4;q^4)_{\infty}^3\Theta(q;q^4)\Theta(q^4;q^{16})}
{\Theta(z;q) \Theta(z^{16};q^{16}) \Theta\big(-q^{2} z^{4};q^4)} 
\cdot 
 \frac{\Theta(-q z^{4};q^4\big )-z^2\Theta(-q^3 z^{4};q^4\big )}
{\Theta(-q z^{4};q^4\big )\Theta(-q^3 z^{4};q^4\big )}.
\end{align*}
Using (\ref{equation:j-split}) with $m=2$ brings us to
\begin{equation*}
D_4(-z^{-5},-z^4,z^{16};q)
=\frac{q^{-1}z^{10} (q^4;q^4)_{\infty}^3\Theta(q;q^4)\Theta(q^4;q^{16})}
{\Theta(z;q) \Theta(z^{16};q^{16}) \Theta\big(-q^{2} z^{4};q^4)} 
\cdot 
 \frac{\Theta(z^2;q)}
{\Theta(-q z^{4};q^4\big )\Theta(-q^3 z^{4};q^4\big )}.
\end{equation*}
Again employing (\ref{equation:j-mod}) with $n=2$ and elementary product rearrangements yields
{\allowdisplaybreaks \begin{align*}
D_4(-z^{-5},-z^4,z^{16};q)
&=\frac{q^{-1}z^{10} (q^4;q^4)_{\infty}^3\Theta(q;q^4)\Theta(q^4;q^{16})\Theta(z^2;q)}
{\Theta(z;q) \Theta(z^{16};q^{16}) \Theta\big(-q^{2} z^{4};q^4)} 
\cdot 
 \frac{(q^2;q^2)_{\infty}}{\Theta(-qz^4;q^2)(q^4;q^4)_{\infty}^2}\\
&=\frac{q^{-1}z^{10} (q^4;q^4)_{\infty}^3\Theta(z^2;q)}
{\Theta(z;q) \Theta(z^{16};q^{16}) \Theta\big(-q^{2} z^{4};q^4)} \\
& \ \ \ \ \ \cdot  \frac{(q;q)_{\infty}(q^4;q^4)_{\infty}}{(q^2;q^2)_{\infty}}
\cdot  \frac{(q^4;q^4)_{\infty}(q^{16};q^{16})_{\infty}}{(q^8;q^8)_{\infty}}
\cdot \frac{(q^2;q^2)_{\infty}}{\Theta(-qz^4;q^2)(q^4;q^4)_{\infty}^2}\\
&=\frac{q^{-1}z^{10}  (q;q)_{\infty}(q^4;q^4)_{\infty}^3(q^{16};q^{16})_{\infty}\Theta(z^2;q)}
{(q^8;q^8)_{\infty}\Theta(z;q) \Theta(z^{16};q^{16}) \Theta\big(-q^{2} z^{4};q^4)\Theta(-qz^4;q^2)}.\qedhere
\end{align*}}%

\section{Statement of results: additional theta function identities}\label{section:ThetaIds}
Note that there are several ways to write the left-hand and right-hand sides of the identities.  Sometimes we choose an expression for the sake of compactness. 

\begin{theorem}\label{theorem:caseN3-ThetaA}   For identities (\ref{equation:idN3-1}) to (\ref{equation:idN3-5}) we have the following theta function identities:
{\allowdisplaybreaks \begin{align}
&
z\frac{\Theta(z^{2};q^3)\Theta(q^{3}z^{6};q^{9})}{\Theta(z;q^3)} +q\frac{\Theta(q^2z^{2};q^3)\Theta(z^{6};q^{9})}{\Theta(qz;q^3)}
 +z^{3}\frac{\Theta(qz^{2};q^3)\Theta(q^{6}z^{6};q^{9})}{\Theta(q^2z;q^3)}\label{equation:idN3-1Theta}\\
& \qquad \qquad =\frac{z(q;q)_{\infty}(q^3;q^3)_{\infty}\Theta(z^3;q)}{\Theta(z;q)\Theta(q^2z^3;q^3)},\notag\\
&z^{4}\frac{\Theta(z^{2};q^3)\Theta(q^{6}z^{6};q^{9})}{\Theta(z;q^3)}
 +z^{3}\frac{\Theta(q^2z^{2};q^3)\Theta(q^{3}z^{6};q^{9})}{\Theta(qz;q^3)}
-q\frac{\Theta(qz^{2};q^3)\Theta(z^{6};q^{9})}{\Theta(q^2z;q^3)}\label{equation:idN3-2Theta}\\
& \qquad \qquad  =\frac{z^3(q;q)_{\infty}(q^3;q^3)_{\infty} \Theta(z^{3};q)}{\Theta(z;q)\Theta(qz^3;q^3)},\notag\\
&
\frac{\Theta(u^{4};q^3)\Theta(q^{3}u^{3};q^{9})}{\Theta(u^2;q^3)} 
 +uq\frac{\Theta(q^2u^{4};q^3)\Theta(u^{3};q^{9})}{\Theta(qu^2;q^3)}
+u\frac{\Theta(qu^{4};q^3)\Theta(q^{6}u^{3};q^{9})}{\Theta(q^2u^2;q^3)}\label{equation:idN3-3Theta}\\
& \qquad \qquad
= \frac{(q;q)_{\infty}(q^3;q^3)_{\infty}}{\Theta(u;q)\Theta(q^2u^3;q^3)}
\cdot \Theta(u^{3};q),\notag\\
&u^{2}\frac{\Theta(u^{4};q^3)\Theta(q^{6}u^{3};q^{9})}{\Theta(u^2;q^3)} 
 +u^3\frac{\Theta(q^2u^{4};q^3)\Theta(q^{3}u^{3};q^{9})}{\Theta(qu^2;q^3)}
-q\frac{\Theta(qu^{4};q^3)\Theta(u^{3};q^{9})}{\Theta(q^2u^2;q^3)}\label{equation:idN3-4Theta}\\
&\qquad \qquad=\frac{u^2(q;q)_{\infty}(q^3;q^3)_{\infty}\Theta(u^{3};q)}{\Theta(u;q)\Theta(qu^3;q^3)},\notag\\
&q\frac{\Theta(q^3z^{2};q^6)\Theta(z^{6};q^{18})}{\Theta(z;q^6)}
 +z^3\frac{\Theta(qz^{2};q^6)\Theta(q^{12}z^{6};q^{18})}{\Theta(q^2z;q^6)}
+z^2\frac{\Theta(q^5z^{2};q^6)\Theta(q^{6}z^{6};q^{18})}{\Theta(q^4z;q^6)}\label{equation:idN3-5Theta}\\
&\qquad \qquad =z^2\frac{(q^2;q^2)_{\infty}^2\Theta(q^3;q^{18})\Theta(-z;q)}{(q;q)_{\infty}}
\frac{\Theta(q^3z^{3};q^6)}{(q^6;q^6)_{\infty}^3}.\notag
\end{align}}%
\end{theorem}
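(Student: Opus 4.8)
The plan is to obtain each of (\ref{equation:idN3-1Theta})--(\ref{equation:idN3-5Theta}) as a by-product of the proof of the corresponding identity in Section \ref{section:N3-proofs}. Each of (\ref{equation:idN3-1})--(\ref{equation:idN3-5}) asserts that two values of $D_3$, equal to one another by the functional equation (\ref{equation:Dn-funcEqn1}), have a common value that is a single quotient of theta functions. In each of those proofs we expanded, via the master formula (\ref{equation:msplit3}), the \emph{second} of the two displayed $D_3$-arguments---the one for which one of the three summands of (\ref{equation:msplit3}) carries a vanishing theta factor ($\Theta(q^3;q^3)=0$, $\Theta(q^6;q^3)=0$, $\Theta(q^6;q^6)=0$, or $\Theta(1;q^9)=0$)---so that the expansion collapsed to one or two terms, which were then combined by the quintuple product identity (\ref{equation:H1Thm1.0}), the Weierstrass relation (\ref{equation:Weierstrass}), or the identity (\ref{equation:ASD-cor1}) into the stated single quotient.

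For the companion theta identities we do the reverse: we expand $D_3$ by (\ref{equation:msplit3}) using the \emph{first} of the two arguments, for which no summand degenerates and all three terms of (\ref{equation:msplit3}) survive. This writes $D_3=P\cdot B$, where $P$ is the single-quotient prefactor of (\ref{equation:msplit3}) and $B$, after rewriting its theta factors by (\ref{equation:j-elliptic}) and (\ref{equation:j-flip}) and pulling out monomials, is (up to an explicit monomial in $q$ and $z$, resp.\ $u$) the genuine three-term sum on the left of the corresponding identity among (\ref{equation:idN3-1Theta})--(\ref{equation:idN3-5Theta}). Since Theorem \ref{theorem:caseN3} already evaluates $D_3$ as the single quotient $Q$ appearing in the corresponding one of (\ref{equation:idN3-1})--(\ref{equation:idN3-5}), we obtain $P\cdot B=Q$, i.e.\ $B=Q/P$, and it only remains to simplify $Q/P$ using (\ref{equation:j-elliptic}), (\ref{equation:j-flip}), (\ref{equation:j-mod}), (\ref{equation:j-roots}) and elementary rearrangements of the $(q^{a};q^{a})_{\infty}$-products. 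The outcome is the right-hand side of that theta identity.

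As an illustration, for (\ref{equation:idN3-1Theta}) we use $D_3(z^{-4},z,q^3z^9;q)$; its prefactor in (\ref{equation:msplit3}) is $q^3z^9(q^3;q^3)_\infty^3/\big(\Theta(z^{-3};q)\Theta(q^3z^9;q^9)\Theta(q^3z^{-3};q^3)\big)$, which by (\ref{equation:j-flip}) and (\ref{equation:j-elliptic}) equals $-q^3z^{12}(q^3;q^3)_\infty^3/\big(\Theta(z^3;q)\Theta(q^3z^9;q^9)\Theta(z^3;q^3)\big)$; dividing the single quotient of (\ref{equation:idN3-1}) by this prefactor and rescaling by the attendant monomial in $q$ and $z$ yields exactly $z(q;q)_\infty(q^3;q^3)_\infty\Theta(z^3;q)/\big(\Theta(z;q)\Theta(q^2z^3;q^3)\big)$, the right-hand side of (\ref{equation:idN3-1Theta}), while the three terms of (\ref{equation:msplit3}), rewritten with the same tools, become its left-hand side. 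The other four identities are handled identically, each invoking the single quotient already proved in Section \ref{section:N3-proofs}.

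The main obstacle is purely computational bookkeeping: one must faithfully track the powers of $q$ and of $z$ (resp.\ $u$) through the transformations (\ref{equation:j-elliptic}), (\ref{equation:j-mod}), (\ref{equation:j-roots}) and through the product rearrangements, and no theta relation beyond those collected in Section \ref{section:prelim} enters. Alternatively, each of (\ref{equation:idN3-1Theta})--(\ref{equation:idN3-5Theta}) could be proved from scratch by clearing denominators and applying Proposition \ref{proposition:H1Thm1.7} exactly as in the proof of (\ref{equation:ASD-cor1}); but given Theorem \ref{theorem:caseN3} the by-product route is considerably shorter.
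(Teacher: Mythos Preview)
Your approach is correct and essentially identical to the paper's: for each identity you expand the \emph{first} $D_3$-argument via (\ref{equation:msplit3}), rewrite each summand with (\ref{equation:j-elliptic}) and (\ref{equation:j-flip}), and then equate with the single quotient already established in Theorem \ref{theorem:caseN3} to isolate the three-term bracket. The paper carries this out explicitly only for (\ref{equation:idN3-1Theta}) and (\ref{equation:idN3-2Theta}) and declares the rest analogous, exactly as you indicate.
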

Note that we do not have an identity for (\ref{equation:idN3-6}).

\begin{theorem}\label{theorem:caseN3-ThetaB} For identities (\ref{equation:idN3-7}) to (\ref{equation:idN3-11}) we have the following theta function identities:
{\allowdisplaybreaks \begin{align}
&z\frac{\Theta(q^2z^{2};q^6)\Theta(q^3;q^{18})}{\Theta(qz;q^6)} 
-\frac{\Theta(z^{2};q^6)\Theta(q^{9};q^{18})}{\Theta(q^3z;q^6)}
-z\frac{\Theta(q^4z^{2};q^6)\Theta(q^{3};q^{18})}{\Theta(q^5z;q^6)}\label{equation:idN3-7Theta}\\
&\qquad \qquad = -\frac{(q^3;q^3)_{\infty}\Theta(z^2;q^6)}
{\Theta(q^3z;q^6)(q^6;q^6)_{\infty}}
\frac{(q^2;q^2)_{\infty}^2}{(q;q)_{\infty}},\notag\\
& \frac{\Theta(q^{4}x^2;q^6)\Theta(q^{6}x^{3};q^{18})}{\Theta(q^5x;q^6)} 
 +x^{-2}q^{3}\frac{\Theta(x^2;q^6)\Theta(x^{3};q^{18})}{\Theta(q^3x;q^6)}
+x\frac{\Theta(q^{2}x^2;q^6)\Theta(q^{12}x^{3};q^{18})}{\Theta(qx;q^6)} \label{equation:idN3-8Theta}\\
&\qquad \qquad =\frac{(q^3;q^3)_{\infty}\Theta(-x;q)}
{(q^6;q^6)_{\infty}},\notag\\
&\frac{\Theta(z^{2};q^6)\Theta(q^{9}z^{3};q^{18})}{\Theta(z;q^6)} 
+ q\frac{\Theta(q^{4}z^{2};q^6)\Theta(q^{3}z^{3};q^{18})}{\Theta(q^2z;q^6)}
+qz\frac{\Theta(q^{2}z^{2};q^6)\Theta(q^{15}z^{3};q^{18})}{\Theta(q^4z;q^6)}\label{equation:idN3-9Theta}\\
&\qquad \qquad =\frac{\Theta(-q;q^4)(q^3;q^3)_{\infty}\Theta(z^2;q^3)}
{\Theta(z;q^{3})(q^6;q^6)_{\infty}},\notag\\
& 
-u^{3}\frac{\Theta(q^2u^{4};q^6)\Theta(q^{15}u^{3};q^{18})}{\Theta(qu^2;q^6)} 
 +\frac{\Theta(u^{4};q^6)\Theta(q^{9}u^{3};q^{18})}{\Theta(q^3u^2;q^6)}
+u\frac{\Theta(q^4u^{4};q^6)\Theta(q^{3}u^{3};q^{18})}{\Theta(q^5u^2;q^6)}\label{equation:idN3-10Theta}\\
& \qquad \qquad = \frac{\Theta(-u;q)\Theta(u^{3};q^6)}
{(q^3;q^3)_{\infty}},\notag\\
&
\frac{\Theta(z^{2};q^6)\Theta(q^{9}z^{6};q^{18})}{\Theta(z;q^6)}  +qz^{-1}\frac{\Theta(q^{4}z^{2};q^6)\Theta(q^{3}z^{6};q^{18})}{\Theta(q^2z;q^6)}
+qz^{2}\frac{\Theta(q^{2}z^{2};q^6)\Theta(q^{15}z^{6};q^{18})}{\Theta(q^4z;q^6)}\label{equation:idN3-11Theta}\\
&\qquad \qquad 
=
\frac{\Theta(-z;q)\Theta(q^3z^{3};q^6)}{(q^3;q^3)_{\infty}}.\notag
\end{align}}%
\end{theorem}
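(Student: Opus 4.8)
The plan is to read each of these identities off as the theta-function identity that is left over once the Appell functions have been cancelled from the corresponding line of Theorem \ref{theorem:caseN3}. Fix an index, say (\ref{equation:idN3-7}). Theorem \ref{theorem:caseN3} asserts $D_3(qz^{-2},z,z^3;q^2)=D_3(qz^{-2},qz,z^3;q^2)$, with both sides equal to an explicit single quotient $Q$; the first equality is a functional equation (\ref{equation:Dn-funcEqn1}), and in Section \ref{section:N3-proofs} the value $Q$ was obtained by feeding \emph{one} of these two arguments into the $n=3$ split (\ref{equation:msplit3}). Feeding the \emph{other} argument into (\ref{equation:msplit3}) produces a second, visibly different sum of three theta quotients, which by the chain of equalities above again equals $Q$. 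Since (\ref{equation:msplit3}) always presents $D_3$ as (a fixed prefactor of theta functions)$\,\times\,$(a sum of three theta quotients), one may cancel the nonzero prefactor and rearrange, and what remains is exactly the claimed identity. Thus nothing here is circular: Theorem \ref{theorem:caseN3} is used as input, not reproved, and the same remark applies verbatim to Theorem \ref{theorem:caseN3-ThetaA} using the $D_3$-identities (\ref{equation:idN3-1})--(\ref{equation:idN3-5}).

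Concretely, for each of (\ref{equation:idN3-7})--(\ref{equation:idN3-11}) I would: (i) take the $D_3$-argument that was \emph{not} specialized in Section \ref{section:N3-proofs}; (ii) substitute it into (\ref{equation:msplit3}), writing $D_3(\cdot)=P\cdot B$ with $P$ the prefactor and $B$ the three-term sum; (iii) normalize $B$, pulling out a monomial in $q$ and in the free parameter and rewriting a few theta functions via the elliptic relation (\ref{equation:j-elliptic}), the reflection (\ref{equation:j-flip}), and, where the shifted arguments demand it, (\ref{equation:j-mod}), (\ref{equation:j-roots}), (\ref{equation:j-split}), until it matches the left-hand side of the target identity; (iv) invoke Theorem \ref{theorem:caseN3} to replace $D_3(\cdot)$ by $Q$, move $P$ across, and simplify the ratio $Q/P$ (again up to a monomial) to the stated right-hand side using the same elementary $\Theta$-relations, with the quintuple product identity (\ref{equation:H1Thm1.0}) called upon in those cases (such as (\ref{equation:idN3-8Theta}) and (\ref{equation:idN3-10Theta})) where a factor $\Theta(\cdot^{2};q)$ coming from $Q$ must be traded for the factor $\Theta(-\cdot;q)$ appearing on the right. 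As a consistency check one can instead collapse $B$ directly into a single quotient, exactly as in Section \ref{section:N3-proofs} — by the Weierstrass corollaries (\ref{equation:WR-cor1}), (\ref{equation:WR-cor2}), by (\ref{equation:ASD-cor1}), or by (\ref{equation:H1Thm1.0}), depending on the case — and verify that it reproduces the right-hand side.

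Because there is no conceptual content beyond Theorem \ref{theorem:caseN3}, the only real obstacle is bookkeeping: each identity passes through several applications of (\ref{equation:j-elliptic}) and (\ref{equation:j-flip}), so one must track the accumulated powers of $q$ and of $z$ (or $u$, or $x$) carefully and match which $\Theta$-factors of $P$ cancel against which factors of $Q$ — an off-by-one in an exponent is the likeliest source of error. The one genuinely non-mechanical decision is, case by case, which auxiliary identity (a Weierstrass corollary, the quintuple product, or (\ref{equation:ASD-cor1})) is needed to finish step (iv); and, as the statement records, the index (\ref{equation:idN3-6}) is the lone exception, its Section \ref{section:N3-proofs} proof using only the quintuple product so that the corresponding byproduct carries no new information. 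Carrying out steps (i)--(iv) for each remaining index yields Theorems \ref{theorem:caseN3-ThetaA} and \ref{theorem:caseN3-ThetaB} in turn.
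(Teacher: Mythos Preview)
Your proposal is correct and follows essentially the same route as the paper: for each line of Theorem \ref{theorem:caseN3} you feed the \emph{other} middle argument (the one not used in Section \ref{section:N3-proofs}) into (\ref{equation:msplit3}), tidy the resulting three-term bracket with (\ref{equation:j-elliptic}) and (\ref{equation:j-flip}), equate to the already-established single quotient, and isolate the bracket. The paper carries this out explicitly only for (\ref{equation:idN3-1Theta}) and (\ref{equation:idN3-2Theta}) and then declares the remaining cases ``all very similar,'' so your plan matches both the method and the level of detail.
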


\begin{theorem}\label{theorem:caseN4-Theta} For identity (\ref{equation:idN4-1}) we have the following theta function identity:
\begin{align}
&
-qz^{-3}\frac{\Theta (-q^{2} z^{3};q^4 )\Theta( z^{12};q^{16})}
{\Theta( z;q^4\big )}
+z^{5}\frac{\Theta (-q z^{3};q^4 )\Theta(q^{12} z^{12};q^{16})}
{\Theta(q z;q^4\big )}\label{equation:id4-1}\\
& \qquad + z\frac{\Theta (- z^{3};q^4 )\Theta(q^{8} z^{12};q^{16})}
{\Theta(q^2 z;q^4\big )}
+\frac{\Theta (-q^{3} z^{3};q^4 )\Theta(q^{4} z^{12};q^{16})}
{\Theta(q^3 z;q^4 )}\notag\\
& \qquad \qquad =
\frac{(q;q)_{\infty}(q^{16};q^{16})_{\infty}\Theta(-z^{4};q^2) \Theta(qz^2;q^2)
\Theta(-z;q)}
{(q^{8};q^{8})_{\infty}(q^2;q^2)_{\infty}^3}.\notag
\end{align}
\end{theorem}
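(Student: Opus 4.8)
The plan is to obtain (\ref{equation:id4-1}) by specializing the $n=4$ master formula (\ref{equation:msplit4}) and then substituting the single-quotient value already established in (\ref{equation:idN4-1}); no genuinely new input is needed. First I would apply Corollary \ref{corollary:msplitn4zprime} with $x=-z^{-5}$, with the middle slot left as $z$, and $z'=z^{16}$. With these choices $xz=-z^{-4}$, $-q^{6}x^{4}z'=-q^{6}z^{-4}$, and $z^{4}/z'=z^{-12}$, so the prefactor in (\ref{equation:msplit4}) becomes $z^{16}(q^{4};q^{4})_{\infty}^{3}/(\Theta(-z^{-4};q)\Theta(z^{16};q^{16})\Theta(-q^{6}z^{-4};q^{4}))$.

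Next is the bookkeeping step. In the bracket of (\ref{equation:msplit4}) the $k$th summand ($k=0,1,2,3$) carries the monomial coefficient $(-xz)^{k}q^{\binom{k}{2}}=z^{-4k}q^{\binom{k}{2}}$, the factor $\Theta(-q^{6+k}x^{4}zz';q^{4})=\Theta(-q^{6+k}z^{-3};q^{4})$, and the factor $\Theta(q^{4k}z^{4}/z';q^{16})=\Theta(q^{4k}z^{-12};q^{16})$. Pulling powers of $q^{4}$ out of the first argument by (\ref{equation:j-elliptic}) and flipping by (\ref{equation:j-flip}) turns $\Theta(-q^{6+k}z^{-3};q^{4})$ into a monomial multiple of $\Theta(-q^{2+k}z^{3};q^{4})$, and a single use of (\ref{equation:j-flip}) turns $\Theta(q^{4k}z^{-12};q^{16})$ into a monomial multiple of $\Theta(q^{16-4k}z^{12};q^{16})$. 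Collecting all the monomial factors shows that the bracket equals $q^{-3}z^{-6}$ times the left-hand side of (\ref{equation:id4-1}), so that
\[
D_{4}(-z^{-5},z,z^{16};q)=\frac{q^{-3}z^{10}(q^{4};q^{4})_{\infty}^{3}}{\Theta(-z^{-4};q)\,\Theta(z^{16};q^{16})\,\Theta(-q^{6}z^{-4};q^{4})}\cdot\bigl(\text{LHS of }(\ref{equation:id4-1})\bigr),
\]
and the left-hand side of (\ref{equation:id4-1}) is recovered by transposing the theta prefactor.

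It then remains to insert the value of $D_{4}(-z^{-5},z,z^{16};q)$. By the functional equation (\ref{equation:Dn-funcEqn1}) (the reflection $D_{n}(x,z,z';q)=D_{n}(x,x^{-1}z^{-1},z';q)$, here with $x^{-1}z^{-1}=-z^{4}$) one has $D_{4}(-z^{-5},z,z^{16};q)=D_{4}(-z^{-5},-z^{4},z^{16};q)$, and the latter is precisely the quantity evaluated in (\ref{equation:idN4-1}). Substituting that single quotient, the factors $(q^{4};q^{4})_{\infty}^{3}$, $z^{10}$ and $\Theta(z^{16};q^{16})$ cancel, leaving
\[
\text{LHS of }(\ref{equation:id4-1})=\frac{q^{2}(q;q)_{\infty}(q^{16};q^{16})_{\infty}\,\Theta(z^{2};q)\,\Theta(-z^{-4};q)\,\Theta(-q^{6}z^{-4};q^{4})}{(q^{8};q^{8})_{\infty}\,\Theta(z;q)\,\Theta(-qz^{4};q^{2})\,\Theta(-q^{2}z^{4};q^{4})}.
\]
To finish I would simplify the right side with the rearrangement identities: (\ref{equation:j-flip}) gives $\Theta(-z^{-4};q)=\Theta(-qz^{4};q)$; (\ref{equation:j-elliptic}) gives $\Theta(-q^{6}z^{-4};q^{4})=q^{-2}z^{4}\Theta(-q^{2}z^{4};q^{4})$, cancelling the $\Theta(-q^{2}z^{4};q^{4})$ in the denominator; (\ref{equation:j-mod}) with $n=2$ gives $\Theta(-qz^{4};q)=\tfrac{(q;q)_{\infty}}{(q^{2};q^{2})_{\infty}^{2}}\Theta(-qz^{4};q^{2})\Theta(-q^{2}z^{4};q^{2})$ (whose first factor cancels the remaining $\Theta(-qz^{4};q^{2})$), and, combined with (\ref{equation:j-roots}), $\Theta(z^{2};q)/\Theta(z;q)=\Theta(qz^{2};q^{2})\Theta(-z;q)/((q;q)_{\infty}(q^{2};q^{2})_{\infty})$; and one last use of (\ref{equation:j-elliptic}), $\Theta(-q^{2}z^{4};q^{2})=z^{-4}\Theta(-z^{4};q^{2})$. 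Collecting the resulting infinite products yields exactly the right-hand side of (\ref{equation:id4-1}).

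The only real obstacle I anticipate is the monomial bookkeeping in the middle step: one must track the powers of $q$ and $z$ through the several applications of (\ref{equation:j-elliptic}) and (\ref{equation:j-flip}) carefully enough that all four summands acquire the same common factor $q^{-3}z^{-6}$, and that every $q$-power and $z$-power ultimately cancels in the final simplification; everything else is forced. Note also that $x=-z^{-5}$ and $z'=z^{16}$ are generic for generic $z$, so both (\ref{equation:msplit4}) and (\ref{equation:idN4-1}) are applicable.
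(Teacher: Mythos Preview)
Your proposal is correct and follows exactly the template the paper uses in Section~\ref{section:ThetaIds-proofs} (illustrated for (\ref{equation:idN3-1Theta}) and (\ref{equation:idN3-2Theta})): specialize (\ref{equation:msplit4}) with the first choice of middle argument, reduce the bracket via (\ref{equation:j-elliptic}) and (\ref{equation:j-flip}) to a monomial times the left side of (\ref{equation:id4-1}), then compare with the single-quotient evaluation (\ref{equation:idN4-1}) (invoking (\ref{equation:Dn-funcEqn1}) for the first equality there) and simplify. Your monomial bookkeeping and the final product rearrangements all check out.
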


\section{Proof of Theorems \ref{theorem:caseN3-ThetaA}, \ref{theorem:caseN3-ThetaB}, \ref{theorem:caseN4-Theta}}\label{section:ThetaIds-proofs}

The proofs are all very similar, so we only include the first two proofs as examples.

\subsection{Proof of (\ref{equation:idN3-1Theta})}

We specialize (\ref{equation:msplit3}) using the first term in (\ref{equation:idN3-1}).  We have
{\allowdisplaybreaks \begin{align*}
D_3(z^{-4},z,q^3z^9;q)&=\frac{q^3z^9(q^3;q^3)_{\infty}^3}{\Theta(z^{-3};q)\Theta(q^3z^9;q^{9})\Theta(q^3z^{-3};q^3)}\Big [ 
z^{-1}\frac{\Theta(q^3z^{-2};q^3)\Theta(q^{-3}z^{-6};q^{9})}{\Theta(z;q^3)} \\
&\ \ \ \ \ -z^{-4}q^{-1}\frac{\Theta(q^4z^{-2};q^3)\Theta(z^{-6};q^{9})}{\Theta(qz;q^3)}
+z^{-7}q^{-1}\frac{\Theta(q^5z^{-2};q^3)\Theta(q^{3}z^{-6};q^{9})}{\Theta(q^2z;q^3)}\Big ].
\end{align*}}%
We use (\ref{equation:j-flip}) and (\ref{equation:j-elliptic}) to obtain
{\allowdisplaybreaks \begin{align*}
D_3(z^{-4},z,q^3z^9;q)&=\frac{q^3z^9(q^3;q^3)_{\infty}^3}{\Theta(qz^{3};q)\Theta(q^3z^9;q^{9})\Theta(z^{3};q^3)}\Big [ 
z^{-1}\frac{\Theta(z^{2};q^3)\Theta(q^{12}z^{6};q^{9})}{\Theta(z;q^3)} \\
&\ \ \ \ \ -z^{-4}q^{-1}\frac{\Theta(q^4z^{-2};q^3)\Theta(q^9z^{6};q^{9})}{\Theta(qz;q^3)}
+z^{-7}q^{-1}\frac{\Theta(q^5z^{-2};q^3)\Theta(q^{6}z^{6};q^{9})}{\Theta(q^2z;q^3)}\Big ]\\
&=\frac{z^{4}(q^3;q^3)_{\infty}^3}{\Theta(z^3;q)\Theta(q^3z^9;q^{9})\Theta(z^{3};q^3)}
\Big [ 
z\frac{\Theta(z^{2};q^3)\Theta(q^{3}z^{6};q^{9})}{\Theta(z;q^3)} \\
&\ \ \ \ \ +q\frac{\Theta(q^2z^{2};q^3)\Theta(z^{6};q^{9})}{\Theta(qz;q^3)}
+z^{3}\frac{\Theta(qz^{2};q^3)\Theta(q^{6}z^{6};q^{9})}{\Theta(q^2z;q^3)}\Big ].
\end{align*}}%
Comparing with (\ref{equation:idN3-1}) yields
\begin{align*}
&\frac{z^{4}(q^3;q^3)_{\infty}^3}{\Theta(z^3;q)\Theta(q^3z^9;q^{9})\Theta(z^{3};q^3)}
\Big [ 
z\frac{\Theta(z^{2};q^3)\Theta(q^{3}z^{6};q^{9})}{\Theta(z;q^3)} +q\frac{\Theta(q^2z^{2};q^3)\Theta(z^{6};q^{9})}{\Theta(qz;q^3)}
\\
& \qquad \qquad +z^{3}\frac{\Theta(qz^{2};q^3)\Theta(q^{6}z^{6};q^{9})}{\Theta(q^2z;q^3)}\Big ] 
  =\frac{z^5(q;q)_{\infty}(q^3;q^3)_{\infty}^4}{\Theta(z;q)\Theta(z^3;q^3)\Theta(q^2z^3;q^3)\Theta(q^3z^9;q^9)}.
\end{align*}
Isolating the term in braces yields (\ref{equation:idN3-1Theta}).

\subsection{Proof of (\ref{equation:idN3-2Theta})}
We specialize (\ref{equation:msplit3}) using the first term in (\ref{equation:idN3-2}).  We have
{\allowdisplaybreaks \begin{align*}
D_3(z^{-4},z,q^6z^9;q)
&=\frac{q^6z^9(q^3;q^3)_{\infty}^3}{\Theta(z^{-3};q)\Theta(q^6z^9;q^{9})\Theta(q^6z^{-3};q^3)}
\Big [ 
z^{-1}\frac{\Theta(q^6z^{-2};q^3)\Theta(q^{-6}z^{-6};q^{9})}{\Theta(z;q^3)}\\
&\ \ \ \ \ -z^{-4}q^{-1}\frac{\Theta(q^7z^{-2};q^3)\Theta(q^{-3}z^{-6};q^{9})}{\Theta(qz;q^3)}
+z^{-7}q^{-1}\frac{\Theta(q^8z^{-2};q^3)\Theta(z^{-6};q^{9})}{\Theta(q^2z;q^3)}\Big ].
\end{align*}}%
We use (\ref{equation:j-flip}) and (\ref{equation:j-elliptic}) to get
{\allowdisplaybreaks \begin{align*}
D_3(z^{-4},z,q^6z^9;q)
&=\frac{q^6z^9(q^3;q^3)_{\infty}^3}{\Theta(qz^{3};q)\Theta(q^6z^9;q^{9})\Theta(q^{-3}z^{3};q^3)}
\Big [ 
z^{-1}\frac{\Theta(q^{-3}z^{2};q^3)\Theta(q^{15}z^{6};q^{9})}{\Theta(z;q^3)}\\
&\ \ \ \ \ -z^{-4}q^{-1}\frac{\Theta(q^7z^{-2};q^3)\Theta(q^{12}z^{6};q^{9})}{\Theta(qz;q^3)}
+z^{-7}q^{-1}\frac{\Theta(q^8z^{-2};q^3)\Theta(q^9z^{6};q^{9})}{\Theta(q^2z;q^3)}\Big ]\\
&=\frac{(q^3;q^3)_{\infty}^3}{\Theta(z^{3};q)\Theta(q^6z^9;q^{9})\Theta(z^{3};q^3)}
\Big [ 
z^{4}\frac{\Theta(z^{2};q^3)\Theta(q^{6}z^{6};q^{9})}{\Theta(z;q^3)}\\
&\ \ \ \ \ +z^{3}\frac{\Theta(qz^{-2};q^3)\Theta(q^{3}z^{6};q^{9})}{\Theta(qz;q^3)}
-q\frac{\Theta(q^2z^{-2};q^3)\Theta(z^{6};q^{9})}{\Theta(q^2z;q^3)}\Big ].
\end{align*}}%
Again using (\ref{equation:j-flip}) yields
\begin{align*}
D_3(z^{-4},z,q^6z^9;q)
&=\frac{(q^3;q^3)_{\infty}^3}{\Theta(z^{3};q)\Theta(q^6z^9;q^{9})\Theta(z^{3};q^3)}
\Big [ 
z^{4}\frac{\Theta(z^{2};q^3)\Theta(q^{6}z^{6};q^{9})}{\Theta(z;q^3)}\\
&\qquad +z^{3}\frac{\Theta(q^2z^{2};q^3)\Theta(q^{3}z^{6};q^{9})}{\Theta(qz;q^3)}
 -q\frac{\Theta(qz^{2};q^3)\Theta(z^{6};q^{9})}{\Theta(q^2z;q^3)}\Big ].
\end{align*}
We compare with (\ref{equation:idN3-2}) to obtain
{\allowdisplaybreaks \begin{align*}
&\frac{(q^3;q^3)_{\infty}^3}{\Theta(z^{3};q)\Theta(q^6z^9;q^{9})\Theta(z^{3};q^3)}
\Big [ 
z^{4}\frac{\Theta(z^{2};q^3)\Theta(q^{6}z^{6};q^{9})}{\Theta(z;q^3)}
 +z^{3}\frac{\Theta(q^2z^{2};q^3)\Theta(q^{3}z^{6};q^{9})}{\Theta(qz;q^3)}\\
&\qquad \qquad -q\frac{\Theta(qz^{2};q^3)\Theta(z^{6};q^{9})}{\Theta(q^2z;q^3)}\Big ]
  =\frac{z^3(q;q)_{\infty}(q^3;q^3)_{\infty}^4}{\Theta(z;q)\Theta(z^3;q^3)\Theta(qz^3;q^3)\Theta(q^6z^9;q^9)}.
\end{align*}}%
Isolating the term in braces gives (\ref{equation:idN3-2Theta}).

\end{document}